\documentclass[11pt,a4paper]{article}
\usepackage{amsmath}
\usepackage{amssymb}
\usepackage[applemac]{inputenc}
\usepackage{graphicx}
\usepackage{color}
\topmargin -1.0cm
\oddsidemargin 0in
\evensidemargin 0in
\textwidth 6.3 truein
\textheight 9.1 truein





\newcommand{\EE}{\mathbb{E}}




\newcommand{\PP}{ \mathbb{P}}
\renewcommand{\P}{ \mathcal{P}}

\newcommand{\QQ}{ \mathbb{Q}}




\newcommand{\R}{{\mathcal R}}

\newcommand{\E}{{\mathcal E}}
\newcommand{\RR}{{\mathbb R}}
\newcommand{\NN}{{\mathbb N}}
\newcommand{\F}{\mathcal F}
\newcommand{\Q}{\mathcal Q}
\newcommand{\C}{\mathcal C}
\newcommand{\dd}{\mathrm d}
\renewcommand{\L}{\mathcal L}

\newcommand{\essinf}{\operatorname*{\mathrm{ess\,inf}}}

\newcommand{\supp}{\mathrm{supp}\;}

\newenvironment{proof}[1][Proof]{\textbf{#1.} }{\ \rule{0.5em}{0.5em}}
\newtheorem{remark}{\textbf{Remark}}[section]
\newtheorem{lemma}{\textbf{Lemma}}[section]
\newtheorem{theorem}{\textbf{Theorem}}[section]
\newtheorem{corollary}{\textbf{Corollary}}[section]
\newtheorem{proposition}{\textbf{Proposition}}[section]

\numberwithin{equation}{section}
\title{ Generalized entropy minimization under full marginal constraints \author{
Julio Backhoff-Veraguas  \thanks{ University of Twente, Department of Applied Mathematics, Enschede, The Netherlands.} \and     Joaqu\'in Fontbona  \thanks{ Center for Mathematical Modeling
(UMI-CNRS 2807), University of Chile,  Beauchef 851, Santiago, Chile. Support by Conicyt Basal-CMM Proyecto/Grant PAI AFB-170001 and Programa Iniciativa Cient\'ifica Milenio
grant number NC120062 through Nucleus Millenium Stochastic Models of Complex and Disordered
Systems is gratefully acknowledged.}  } }

\begin{document}
\maketitle
\abstract{ 
We consider the problem of minimizing a generalized relative entropy, with respect to a reference diffusion law, over the set of path-measures with fully prescribed marginal distributions. When dealing with the actual relative entropy, problems of this kind have appeared in the stochastic mechanics literature, and minimizers go under the name of Nelson Processes.
 
 Through convex duality and stochastic control techniques, we obtain in our main result the full characterization of minimizers, containing the related results in the pioneering works of Cattiaux \& L\'eonard \cite{CattLeo} and Mikami \cite{Mikami_FP} as particular cases. We also establish that minimizers need not be Markovian in general, and may depend on the form of the generalized relative entropy if the state space has dimension greater or equal than two. Finally, we illustrate how generalized relative entropy minimization problems of this kind may prove useful beyond stochastic mechanics, by means of two applications: the analysis of certain mean-field games, and the study of scaling limits for a class of backwards SDEs.

}
\medskip\medskip\medskip

{\bf Keywords:} Nelson processes, Schr\"odinger problem, entropy minimization, marginal constraints, convex duality, mean field games, generalized entropy, BSDE, minimal supersolution. 

\normalsize

\section{Introduction}
\label{intro}

\subsection*{Overview}Let $t\mapsto \mu_t$ be a given weakly continuous flow of probability measures on $\RR^q$. In this work we consider the following variational problem:
\begin{equation}\label{p aux}
\inf \left\{ \EE^{\QQ}\left [ \int_0^{ T} g^*\left ( t,X_t,\sigma'(t,X_t) \beta_t\right )\dd t \right ]:\,\QQ\circ X_t^{-1}=\mu_t,\,\,\forall t\in [0,T]  \right\},
\end{equation}
where the optimization is performed over all probability measures $\QQ$ solution of the martingale problem with coefficients $(b+a\beta,a)$, where $a=\sigma\sigma'$ and $b$ are fixed functions (contrary to $\beta$). When $g^*(t,x,\cdot)=\|\cdot\|^2/2$ this corresponds to minimizing the relative entropy of $\QQ$ with respect to the law of the solution of the martingale problem $(b,a)$, given the flow of marginals constraint. In such case a unique extremal solution to \eqref{p aux} is known to exist provided this problem is finite, and it is known to be a Markovian measure. The construction of such trajectorial law goes under the name of ``Nelson Processes'' in the stochastic mechanics literature; see \cite{CattLeo_AIHP,CattLeo} and references therein. For generalized entropy minimization as in \eqref{p aux}, the problem has only been analysed in \cite{Mikami_FP}, to the best of our knowledge. Our aim is to:

\begin{itemize}
\item Obtain existence, duality, and characterization of the optimizers of \eqref{p aux}.
\item Establish the nature of the optimizer of \eqref{p aux} in terms of its Markovianity and robustness (i.e.\ interplay between $g^*$, its growth, and the spatial dimension $q$).
\item Introduce novel applications for \eqref{p aux} beyond stochastic mechanics. 
\end{itemize}

\medskip

\subsection*{Proper Setting and Assumptions}\label{setting}

Let $$b:[0,T]\times\RR^q\to \RR^q,\,\,\,\, \sigma:[0,T]\times\RR^q\to \RR^{q\times q}.$$
%
Throughout we use the apostrophe ($'$) to denote transposition, and we let $$a:= \sigma\sigma',$$ which is then an $\RR^{q\times q}$-valued function. We work under the assumption
%
\begin{itemize}
\item[{\bf(A)}] $a$ is bounded and $b,\sigma$ are once differentiable in time, twice differentiable in space, and satisfy the usual linear-growth and Lipschitz conditions of It\^o theory. The matrix $\sigma$ is invertible. 
\end{itemize}
Let us define the differential operators:
\begin{align*}
\L = b'\nabla_x+\frac{1}{2}\sum_{i,j}a^{i,j}\partial^2_{x_i,x_j}\,\,, \hspace{20pt}
\L_t = \partial_t+\L.
\end{align*}
Under the above assumption, the martingale problem with generator $\L$ (one also says, with coefficients $(b,a)$) and domain $C_0^\infty((0,T)\times \RR^q)$ admits for each starting point $x$ a unique solution, which we denote $\PP_x$. Equivalently, the diffusion SDE 
\begin{equation}\dd X_t = b(t,X_t)\dd t+ \sigma (t,X_t)\dd W_t,\,\,\,\, X_0=x,\label{SDE}
\end{equation}
has a unique weak solution. 

From now on we fix a starting distribution $m_0(\dd x)$ and define $$\PP := \int m_0(\dd x)\PP_x.$$ We also denote throughout by $X$ the canonical process on $\Omega:=C([0,T];\RR^q)$, and by $\{\F_t\}$ the canonical filtration. 
Further, we define
$$M_t:= X_t-X_0-\int_0^t b(s,X_s)ds = \int_0^t \sigma(s,X_s)\dd W_s \,.$$
%
%
For simplicity we write $\EE$ for expectation under $\PP$. With $\mu:=\{\mu_t\}_t$ we also denote
$$
\Q(\mu):=\left\{ \QQ\in\P(\Omega):\,\,
\begin{array}{c}
 \QQ\circ X_t^{-1}=\mu_t \,\,\forall t\in [0,T],\,\, M_{\cdot}-\int_0^{\cdot} a(t,X_t) \beta^\QQ_t\dd t \\  \mbox{ is a $\QQ$-martinglale and }\langle M\rangle_\cdot=\int_0^\cdot a(t,X_t)\dd t
\end{array}
\right \}.
$$
\noindent We stress that $\Q(\mu)$ may contain measures singular with respect to $\PP$. We can now properly define \eqref{p aux}: the \emph{primal} optimization problem central to this article is:
%
%
%
\begin{align}
\inf_{\QQ\in \Q(\mu)} \EE^{\QQ}\left [ \int_0^{ T} g^*\left ( t,X_t,\sigma'(t,X_t)\beta^{\QQ}_t\right )\dd t \right ], \label{eqprimal extendido}\tag{$P^{ext}[\mu]$}
\end{align}
where $$g^*:[0,T]\times \RR^q\times \RR^q\to \RR_+\, .$$
It is implicitly assumed that $\beta^\QQ$ above is a predictable functional (of $X$) s.t.\ the $\dd t$-integral is well-defined. 

Regarding the family $\mu:=\{\mu_t\}_t$ we make the standing assumption:

\begin{itemize}
\item[{\bf (B)}] Each $\mu_t$ is a Borel probability measure on $\RR^q$ and the function $t\mapsto \mu_t$ is continuous w.r.t.\ the usual weak topology of measures on the target space. We further assume $$\mu_0=m_0.$$
\end{itemize}
%

We let $g$ be the convex conjugate of $g^*$ w.r.t.\ the last argument:
$$(t,x,y)\in[0,T]\times\RR^q\times\RR^q\mapsto g(t,x,y):=\sup_{z\in\RR^q}\{\langle z,y \rangle - g^*(t,x,z)\}.$$ 
Out standing assumption on $g^*$ is

\begin{itemize}
\item[{\bf (C)}] $g^*$ is measurable in the first two coordinates, whereas it is strictly convex, even and continuously differentiable 
in the last one. Moreover, we have
\begin{enumerate}
\item $g^*(t,x,y)=0\iff y=0,$
\item $(t,x)\mapsto \sup_{|y|\leq n} g^*(t,x,y)$ is $\mu_t(\dd x)\dd t$-integrable for each $n\in\NN$,
\item $ \limsup_{|y|\to 0}\frac{g^*(t,x,y)}{|y|}=0$\mbox{  and for some $p>1$ we have uniformly on $(t,x)$}$$\liminf_{|y|\to \infty}\frac{g^*(t,x,y)}{|y|^p}>0,$$ 
\item $\exists C> 1$ and $h:[0,T]\times\RR^q\to\RR_+$ $\mu_t(\dd x)\dd t$-integrable s.t. $$g^*(t,x,2y)\leq Cg^*(t,x,y)+ h(t,x),$$
\item $\exists \ell>1$ and $H:[0,T]\times\RR^q\to\RR_+$ $\mu_t(\dd x)\dd t$-integrable s.t. $$g^*(t,x,y)\leq \frac{g^*(t,x,\ell y)}{2\ell}+ H(t,x).$$
\end{enumerate}
\end{itemize}
%
%
Note that in particular $g^*$ is jointly measurable, and necessarily $g$ is non-negative and finite-valued. Furthermore, $g$ is strictly convex, differentiable and even w.r.t.\ the last coordinate.  

\begin{remark}
The case $g^*(t,x,y)=\|y\|^2$ corresponds to the entropy criterion. Notice that $g^*(t,x,y)=R(t,x)\|y\|^p$, with $1<p<\infty$ and $R(\cdot,\cdot)$ integrable and uniformly strictly positive, satisfies the above assumptions.  More generally, $g^*(t,x,y)=R(t,x)\|y\|^p[\, 1+|\,\log\|y\|\,|\,]$ does it too, and so forth.
\end{remark}

We shall occasionally refer to the property $$\liminf_{|y|\to \infty}\frac{g^*(t,x,y)}{|y|^2}>0, \mbox{uniformly on $(t,x)$},$$ by saying that  ``\emph{$g^*$ has at least quadratic growth}.'' This is \emph{not} assumed for most results in this article.

\subsection*{Main results}\label{main results}

We introduce the space of test functions
$$\C:= \left \{ w:[0,T]\times\RR^q\to \RR\,\,\in C^{1,2}: \supp(w)\subset (0,T)\times\RR^q  \mbox{ compact}  \right \}  , $$
%
with an associated variational problem:
\begin{equation}\tag{$D_0[\mu]$}
\label{eq dual original}
\sup_{w\in \C}\, \iint  \left\{ \L_t w(t,x) - g(t,x,\sigma'(t,x)\nabla w(t,x))\right \}\mu_t(\dd x) \dd t.
\end{equation}
%

Problem \eqref{eq dual original} has to be supplemented with a suitable extension, namely
\begin{equation}\tag{$D[\mu]$}
\label{eq dual extendido}
\sup_{\psi\in L^g_\nabla}\, L(\psi)- \iint   g(t,x,\sigma'(t,x)\psi(t,x))\mu_t(\dd x) \dd t.
\end{equation}
We postpone the definition of $L^g_\nabla$ and the interpretation of the linear functional $L(\psi)$ to Section \ref{duality}. 
Problems \eqref{eq dual original} and \eqref{eq dual extendido} are referred to as the \emph{dual} and the \emph{extended dual} problems respectively. We can now state the main structural result of the article. 


\begin{theorem}\label{Thm Main}
There is no duality gap:
\begin{align}\label{eq values equal} 
\mbox{value}\eqref{eqprimal extendido}\,\, =\,\, \mbox{value}\eqref{eq dual original}\,\, =\,\, \mbox{value}\eqref{eq dual extendido}. 
\end{align}
If this common value is finite, then the primal problem is attained by a unique $\QQ\in\Q(\mu)$, and 
the extended dual problem is attained by a $\mu_t(\dd x)\dd t$-a.s.\ unique ${\Psi}\in L_\nabla^g$.  These optimizers are related as follows: 
Under $\QQ$ the canonical process satisfies
\begin{align}\label{formula crucial}
\dd X_t =\left\{ b(t,X_t)- \sigma(t,X_t)\nabla g(t,X_t,\sigma'(t,X_t)\Psi(t,X_t))\right \}\dd t + \sigma(t,X_t)\dd W_t\, ,
\end{align}
and the common value in \eqref{eq values equal} equals 
$$\iint g^*(t, x,\, \nabla g(t ,x,\sigma'(t,x)\Psi(t,x) )\,)     \mu_t(\dd x) \dd t. $$
%
If furthermore $g^*$ has at least quadratic growth, then $\QQ\ll\PP$ and 
\begin{equation}\label{formula crucial abs}
\frac{\dd \QQ}{\dd \PP}= \,\E\left ( -\int \nabla g(t,X_t,\sigma'(t,X_t){\Psi}(t,X_t))'\sigma^{-1}(t,X_t)\dd M_t   \right )_T\,.
\end{equation}

\end{theorem}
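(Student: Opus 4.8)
The plan is to attack Theorem \ref{Thm Main} by the classical convex-duality route, treating the primal value as a concave functional of a perturbation and identifying the dual through a minimax argument, and only then upgrading weak optimizers to the strong trajectorial description \eqref{formula crucial}. First I would set up the Lagrangian: the marginal constraints $\QQ\circ X_t^{-1}=\mu_t$ can be dualized by integrating test functions $w\in\C$ against $\mu_t$ and against $\QQ\circ X_t^{-1}$, using It\^o's formula to rewrite $\EE^{\QQ}[\int_0^T\L_t w(t,X_t)\,\dd t]$ as $\EE^\QQ[w(T,X_T)-w(0,X_0)]-\EE^\QQ[\int_0^T\nabla w(t,X_t)'a(t,X_t)\beta^\QQ_t\,\dd t]$ (the compact support in $(0,T)$ kills the boundary terms against $\mu$). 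Pointwise-in-$(t,x,\beta)$ minimization of the integrand $g^*(t,x,\sigma'\beta)-\nabla w'a\beta$ over $\beta$ produces exactly $-g(t,x,\sigma'\nabla w)$ by definition of the conjugate $g$, since $a\beta=\sigma(\sigma'\beta)$ and $g$ is the conjugate of $g^*$ in the last slot; this is what yields the functional in \eqref{eq dual original}. So $\mathrm{value}\eqref{eqprimal extendido}\ge\mathrm{value}\eqref{eq dual original}$ is the easy (weak-duality) inequality. For the reverse inequality I would work with the perturbation function $\Phi(\xi):=\inf\{\EE^\QQ[\int g^*(\cdots)]: \QQ\circ X_t^{-1}=\mu_t+\xi_t\}$ on a suitable space of signed measure flows, show it is convex, and compute its convex conjugate; the Fenchel--Moreau biconjugate then recovers $\Phi(0)$ provided $\Phi$ is lower semicontinuous at $0$, which is where the growth/$\Delta_2$-type conditions in {\bf (C)} (items 3--5) enter, via de la Vall\'ee-Poussin and an Orlicz-space compactness argument that gives weak compactness of the competitor drifts $\beta^\QQ$ and hence existence plus lower semicontinuity. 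This simultaneously delivers no duality gap between \eqref{eqprimal extendido} and the extended dual \eqref{eq dual extendido}, with $L^g_\nabla$ being the natural Orlicz-type completion of $\{\sigma'\nabla w: w\in\C\}$ and $L(\psi)$ the continuous extension of $\psi\mapsto\iint\L_t w\,\mu_t\,\dd t$ (an argument is needed that the unextended dual \eqref{eq dual original} is still large enough to have the same value, i.e.\ density of $\C$-gradients, which the assumptions on $b,\sigma$ and the regularizing operator $\L$ make possible by mollification).

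Next, assuming finiteness, I would establish existence and uniqueness of the primal optimizer $\QQ$ and the extended-dual optimizer $\Psi$. Existence of $\QQ$ follows from the weak-compactness/tightness of a minimizing sequence in $\Q(\mu)$ together with lower semicontinuity of the cost (here the $\liminf$ super-$p$-growth in {\bf (C)}.3 controls the drifts in $L^p$, and the martingale-problem formulation is stable under the relevant convergence); strict convexity of $g^*$ in its last argument and convexity of $\Q(\mu)$ force uniqueness of the optimal $\sigma'\beta^\QQ$, hence of $\QQ$ by well-posedness of the martingale problem with coefficients $(b+a\beta,a)$. For $\Psi$, existence of a maximizer in $L^g_\nabla$ comes from coercivity of $\psi\mapsto L(\psi)-\iint g(\cdots)\mu_t\dd t$ (the super-linear growth of $g^*$ makes $g$ coercive enough in the right Orlicz sense), and $\mu_t(\dd x)\dd t$-a.s.\ uniqueness from strict convexity of $g$ in the last slot. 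The first-order optimality condition, obtained by differentiating the dual functional along directions in $L^g_\nabla$, reads: for $\mu_t(\dd x)\dd t$-a.e.\ $(t,x)$ the optimal competitor drift equals $\beta^\QQ_t = -\beta$ realizing the conjugacy, i.e.\ $\sigma'\beta^\QQ = -\nabla g(t,X_t,\sigma'\Psi)$ (equivalently $a\beta^\QQ=-\sigma\nabla g(\cdots)$); substituting into the martingale-problem characterization of $\Q(\mu)$ gives precisely \eqref{formula crucial}, and plugging the same relation into the cost and using the Legendre identity $g^*(t,x,\nabla g(t,x,y))=\langle y,\nabla g(t,x,y)\rangle - g(t,x,y)$ together with the attained dual value identifies the common value as $\iint g^*(t,x,\nabla g(t,x,\sigma'\Psi))\,\mu_t\dd t$. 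I expect the hardest step here to be rigorously passing from the abstract dual optimizer $\Psi\in L^g_\nabla$ (a priori only an $L^g$-limit of smooth gradients, with no pointwise regularity) to a bona fide predictable drift for which the martingale problem \eqref{formula crucial} is well-posed and consistent with the prescribed marginals — this requires showing the candidate drift $-\sigma\nabla g(t,X_t,\sigma'\Psi)$ is $\dd t\otimes\QQ$-integrable along the optimal measure and that the Fokker--Planck equation for \eqref{formula crucial} indeed has $\{\mu_t\}$ as its solution, i.e.\ that the dual optimizer genuinely "sees" the correct marginals.

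Finally, for the absolute-continuity statement: when $g^*$ has at least quadratic growth, $g$ grows at most quadratically, so the candidate drift $\theta_t:=-\nabla g(t,X_t,\sigma'\Psi(t,X_t))$ satisfies $\int_0^T|\theta_t|^2\,\dd t<\infty$ under $\QQ$ — indeed $\EE^\QQ\int_0^T g^*(t,X_t,\sigma'\beta^\QQ_t)\dd t<\infty$ is the finite primal value and quadratic growth of $g^*$ bounds $\EE^\QQ\int|\sigma'\beta^\QQ_t|^2\dd t<\infty$, and $\sigma^{-1}$ is bounded since $a$ is bounded and $\sigma$ invertible. Then one applies Girsanov's theorem in the direction from $\QQ$ back to $\PP$: under $\QQ$, $M_\cdot-\int_0^\cdot a\beta^\QQ$ is a martingale with bracket $\int a\,\dd t$, so writing $\dd M_t = \sigma\,\dd W^\QQ_t + a\beta^\QQ_t\,\dd t$ and using $a\beta^\QQ = -\sigma\nabla g(\cdots)$, the exponential martingale $\E(-\int \nabla g(t,X_t,\sigma'\Psi)'\sigma^{-1}\dd M_t)$ is precisely the Radon--Nikodym density making $\PP$ out of $\QQ$ (equivalently, its reciprocal is $\dd\QQ/\dd\PP$); one checks the Novikov or Dol\'eans-type integrability condition is met because of the $L^2$-bound on $\theta$, or more carefully one localizes and passes to the limit. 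Matching this against the martingale-problem characterization of $\PP$ (coefficients $(b,a)$) confirms \eqref{formula crucial abs}. The main obstacle overall is therefore not this last Girsanov computation but the duality-attainment and regularity-of-$\Psi$ bookkeeping in the preceding two paragraphs, i.e.\ choosing the function spaces ($L^g_\nabla$, the Orlicz setting) so that both coercivity for dual existence and enough compactness for no-gap hold simultaneously under the sharp assumptions {\bf (C)}.1--5.
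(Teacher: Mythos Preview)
Your overall convex-duality architecture matches the paper's, but your sketch skips precisely the two steps that carry the weight of the argument, and the paper's proof handles them by means you do not mention.

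\textbf{Identifying the biconjugate with $D_0[\mu]$.} You propose to show $\Phi(0)=\Phi^{**}(0)$ via Fenchel--Moreau once $\Phi$ is lsc. The paper does this too (Proposition~\ref{prop no d gap}), but Fenchel--Moreau by itself only says $\Phi(0)$ equals a supremum of affine minorants; it does \emph{not} tell you that this supremum is the concrete functional $D_0[\mu]=\sup_{w\in\C}\iint[\L_tw-g(\sigma'\nabla w)]\,\mu_t(\dd x)\dd t$. Computing $\Phi^*$ requires evaluating the stochastic control value $\sup_\QQ\{\EE^\QQ[\int f(t,X_t)\dd t]-\tilde I(\QQ)\}$ and then showing that $\sup_f\{\iint f\,\dd\mu-\text{(this control value)}\}=D_0[\mu]$. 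Your pointwise-in-$\beta$ minimization establishes weak duality only; it does not solve the control problem in the reverse direction. The paper closes this gap with machinery absent from your sketch: it identifies the control value with the BSDE minimal-supersolution operator $E^g_0$ (Lemmas~\ref{identificacionconjugadas} and \ref{identificacionconjugadas ext}), then passes to the associated HJB viscosity supersolution and invokes the Fleming--Vermes representation (minimal viscosity supersolution equals infimum over classical supersolutions) to recover exactly the form of $D_0[\mu]$ (Lemma~\ref{lem multiple}, part 3, and Lemma~\ref{lem P*,D,P**}). Without this control/BSDE layer, or a substitute such as a carefully justified minimax theorem, your no-gap argument is incomplete.

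\textbf{Markovianity of the optimal drift.} Your passage from the dual first-order condition to $\sigma'\beta^\QQ=-\nabla g(\cdot,\sigma'\Psi)$ presumes the optimal drift $\beta^\QQ_t$ is already a function of $(t,X_t)$; a priori it is only path-predictable. The paper resolves this explicitly: it introduces the Markovian projection $\bar\beta(t,x):=\EE^\QQ[\beta_t\mid X_t=x]$, observes that $\bar\beta$ satisfies the same weak Fokker--Planck identity \eqref{eq unicidad} as $\beta$, and then sandwiches via Jensen and strict convexity of $g^*$ together with no-gap to force $\beta_t=\bar\beta(t,X_t)$ $\QQ\otimes\dd t$-a.s.\ (the chain \eqref{eq cadena crucial}--\eqref{eq final 1}). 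A second strict-convexity argument, comparing $\bar\beta$ with $(\sigma')^{-1}\nabla g(\cdot,\sigma'\Psi)$ inside the class of Markov drifts compatible with $\mu$, then yields \eqref{eq final 2}--\eqref{eq final 3}. You correctly flag this passage as ``the hardest step'' but do not supply the conditioning/Jensen mechanism that actually resolves it.

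Your Girsanov paragraph is fine in spirit, though Novikov is the wrong criterion (it would need an exponential-moment bound under $\PP$, which you do not have); the correct route is the one you also hint at: $\EE^\QQ\int|\sigma'\beta^\QQ|^2\dd t<\infty$ together with uniqueness of the $(b,a)$-martingale problem gives $\QQ\ll\PP$ via the reverse Girsanov construction.
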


We now provide two applications of the main result. First we ask whether the optimal measure for the primal problem has the Markov property. Recall that this does not simply follow from the coefficients being ``Markovian,'' and indeed we will show that the Markov property may fail for the optimal measure. This answers an open question in \cite{Mikami_FP} to the negative.

\begin{corollary}\label{coro mark}
There is $\PP$ and $\mu$ with $value\eqref{eqprimal extendido}<\infty$, for which the optimal solution $\QQ$ does not have the Markov property. On the other hand, if we assume that \eqref{eqprimal extendido} is attained by a probability measure absolutely continuous w.r.t.\ $\PP$ (which is guaranteed if $g^*$ has at least quadratic growth and the problem is finite),
%
then the optimal $\QQ$ must have the Markov property. 
\end{corollary}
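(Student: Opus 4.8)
The statement splits into two implications; I would treat the positive one first, as it is more structural, and then indicate where the genuine difficulty sits.

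\emph{Step 1: absolute continuity forces the Markov property.} Assume \eqref{eqprimal extendido} is finite and attained by a $\QQ\in\Q(\mu)$ with $\QQ\ll\PP$, and let $Z_t:=\EE[\,\dd\QQ/\dd\PP\mid\F_t\,]$, a nonnegative $\PP$-martingale with $Z_0=1$ (as $\QQ\circ X_0^{-1}=\mu_0=m_0=\PP\circ X_0^{-1}$). I would first show that $Z$ is the exponential appearing in \eqref{formula crucial abs}: when $g^*$ has at least quadratic growth this is already part of Theorem~\ref{Thm Main}, and in general it follows from \eqref{formula crucial}, because under~(A) the Brownian motion $W=\int\sigma^{-1}(s,X_s)\,\dd M_s$ generates $\{\F_t\}$ up to null sets, so martingale representation gives $Z=\E(\int_0^\cdot\phi_s'\,\dd W_s)$ and Girsanov identifies the $\QQ$-drift of $X$ as $b(t,X_t)+\sigma(t,X_t)\phi_t$; comparing with \eqref{formula crucial} and using invertibility of $\sigma$ forces $\phi_t=-\nabla g(t,X_t,\sigma'(t,X_t)\Psi(t,X_t))$ $\dd t\otimes\QQ$-a.e. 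Consequently $Z$ is a multiplicative functional of the (time-inhomogeneous) Markov process $X$ under $\PP$: writing $Z_{t,T}:=\E\big(-\int_t^\cdot\nabla g(s,X_s,\sigma'(s,X_s)\Psi(s,X_s))'\sigma^{-1}(s,X_s)\,\dd M_s\big)_T$, one has $Z_T=Z_t\,Z_{t,T}$ with $Z_{t,T}$ a function of the post-$t$ path $(X_s)_{s\ge t}$ only. The Markov property of $\QQ$ is then the classical computation: for bounded $F$ measurable w.r.t.\ $\sigma(X_s:s\ge t)$, Bayes' rule together with the Markov property of $\PP$ yields $\EE^{\QQ}[F\mid\F_t]=Z_t^{-1}\EE[FZ_T\mid\F_t]=\EE[FZ_{t,T}\mid\F_t]=\EE[FZ_{t,T}\mid X_t]$, which is $\sigma(X_t)$-measurable. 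Since, by Theorem~\ref{Thm Main}, at least quadratic growth plus finiteness already guarantees $\QQ\ll\PP$, this also proves the parenthetical assertion.

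\emph{Step 2: a non-Markov optimizer.} I would exhibit an explicit instance satisfying (A)--(C) with finite value. By Step~1 any such instance must have $\QQ$ \emph{not} absolutely continuous with respect to $\PP$, so $g^*$ cannot have quadratic growth; I would therefore take a strictly sub-quadratic $g^*$, e.g.\ $g^*(t,x,y)\asymp|y|^p$ with $1<p<2$, so that the optimal drift $-\sigma\nabla g(t,x,\sigma'\Psi(t,x))$ in \eqref{formula crucial} is only $p$-integrable against $\mu_t(\dd x)\dd t$ and, crucially, need not be Lipschitz. Working in dimension $q\ge 2$ gives extra room — the Fokker--Planck constraint defining $\Q(\mu)$ is then met by a whole family of feedback velocities — which I would use to arrange that \eqref{formula crucial} is an ill-posed SDE admitting, besides Markov solutions whose one-dimensional marginals differ from $\mu$, a solution $\QQ\in\Q(\mu)$ that is not Markov. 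One then checks admissibility, that this $\QQ$ has cost equal to $\iint g^*(t,x,\nabla g(t,x,\sigma'(t,x)\Psi(t,x)))\,\mu_t(\dd x)\dd t$ and hence, by Theorem~\ref{Thm Main}, is \emph{the} optimizer, and finally that it fails the Markov property, e.g.\ by computing a two-time conditional law.

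\emph{Where the difficulty lies.} Step~1 is essentially bookkeeping around the classical fact that a Girsanov change of a Markov law with state-dependent integrand stays Markov, once the density is identified with \eqref{formula crucial abs}. The real work is Step~2: one must design a single triple $(\PP,\mu,g^*)$ obeying (A)--(C) for which the extended dual \eqref{eq dual extendido} is explicit enough to control the optimal drift, the associated feedback SDE \eqref{formula crucial} genuinely fails uniqueness with a non-Markov solution matching \emph{every} marginal $\mu_t$, and that solution still attains the value of \eqref{eqprimal extendido}. The constraint making this subtle is precisely \eqref{formula crucial}: it pins the optimal drift to feedback form, so the counterexample has to live on the thin edge where such a feedback SDE is ill-posed while the marginal and cost data still isolate the minimizer.
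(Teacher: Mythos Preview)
Your Step~1 is essentially the paper's argument: once $\QQ\ll\PP$ with density given by \eqref{formula crucial abs}, the Markov property follows because the stochastic exponential over $[s,t]$ is a functional of $(X_r)_{r\ge s}$ and $\PP$ is Markov. The paper attributes this computation to \cite[Theorem~12]{Zheng_tightness}; your write-up is correct and slightly more detailed.

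Step~2, however, is only a blueprint, not a proof, and one of its guiding heuristics is misleading. You propose to look in dimension $q\ge 2$ on the grounds that the Fokker--Planck constraint then admits several feedback drifts. But the paper's counterexample is in dimension $q=1$, where by Corollary~\ref{coro univ} the optimal feedback drift is \emph{unique} --- non-Markovianity therefore cannot come from a choice among feedback velocities. The mechanism is different: one picks $\PP$ to be Wiener measure, $g^*(b)=|b|^p/p$ with $1<p<2$, and engineers $\mu$ so that the optimal drift is singular (specifically, the Bessel$(\delta)$ drift $(\delta-1)/(2x)$ with $1<\delta<2$). The feedback SDE \eqref{formula crucial} then has a non-Lipschitz coefficient at the origin, and one constructs a non-Markov weak solution by running the Bessel process until it first hits $0$ and then choosing the \emph{sign} of the continuation according to $\F_\tau$-measurable information (e.g.\ $\mathrm{sign}(X_{\tau/2}-1)$). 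This $Y$ solves the same SDE, has the same one-dimensional marginals as the Bessel process, and the integrability $\EE\int_0^1|Y_t|^{-p}\,\dd t<\infty$ (which one checks from the Bessel transition density when $p<2\nu+2=\delta$) gives finiteness of the primal value. Identifying the dual optimizer $\Psi$ explicitly then certifies that $\mathrm{Law}(Y)$ is \emph{the} primal optimizer.

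So the missing idea is not ``more room in higher dimension'' but rather ``a singular one-dimensional drift for which the feedback SDE loses uniqueness after hitting the singularity, so that a path-dependent selection among weak solutions preserves the marginals.''
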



Second, we address the following question: is the optimizer of the primal problem \emph{universal}, i.e.\ independent of the concrete $g^*$?. Our insight is that the answer depends on the dimension $q$:

\begin{corollary}\label{coro univ}
In dimension one ($q=1$) the solution of the primal problem does not depend on the cost $g^*$, as long as {\bf (C)} is fulfilled. In higher dimensions ($q\geq 2$) there is dependence on $g^*$. 
\end{corollary}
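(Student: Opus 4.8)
The plan is to exploit a sharp dichotomy. In dimension one the marginal constraint alone already \emph{forces} the drift, so the minimizer cannot see $g^*$; in dimension $q\ge2$ there is genuine ``rotational'' freedom that distinct costs resolve differently, which I would make visible through an explicit Gaussian example. For the one-dimensional claim, fix any $\QQ\in\Q(\mu)$ and set $\bar\beta_t(x):=\EE^{\QQ}[\beta^{\QQ}_t\mid X_t=x]$. Testing the martingale property defining $\Q(\mu)$ against functions $w(t,x)=\chi(t)\phi(x)$, $\chi\in C_c^\infty(0,T)$, $\phi\in C_c^\infty(\RR)$, shows that for a.e.\ $t$ the finite signed measure $\nu_t:=(b+a\bar\beta_t)\,\mu_t$ obeys $\int\phi'\,\dd\nu_t=\tfrac{\dd}{\dd t}\!\int\phi\,\dd\mu_t-\tfrac12\!\int a\,\phi''\,\dd\mu_t$ for all such $\phi$. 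In one dimension $\{\phi':\phi\in C_c^\infty(\RR)\}$ is exactly the set of mean-zero test functions, and a finite measure annihilating all of them is zero; hence $\nu_t$, and with it $\hat b_t:=\dd\nu_t/\dd\mu_t=b+a\bar\beta_t\in L^1(\mu_t)$, is determined by $\mu$ alone, independently of $g^*$. Separately, for any cost obeying {\bf(C)}, one application of Jensen's inequality (strict convexity and evenness of $g^*(t,x,\cdot)$, linearity of $\sigma'$) gives, for every $\QQ\in\Q(\mu)$,
$$\EE^{\QQ}\!\int_0^T\! g^*(t,X_t,\sigma'\beta^{\QQ}_t)\,\dd t\ \ge\ \iint g^*(t,x,\sigma'(t,x)\bar\beta_t(x))\,\mu_t(\dd x)\,\dd t=:m(g^*),$$
with equality precisely when the control is of feedback form $\beta^{\QQ}_t=\bar\beta_t(X_t)$.

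Now let $g_1^*,g_2^*$ both satisfy {\bf(C)} and make the primal problem finite. By Theorem \ref{Thm Main} each $g_i^*$-problem has a unique minimizer $\QQ_i$, and \eqref{formula crucial} shows $\QQ_i$ has a feedback drift; matching that drift against $b+a\beta^{\QQ_i}$ and invoking the uniqueness of $\hat b$ above, $\QQ_i$ solves the martingale problem with the \emph{same} coefficients $(\hat b,a)$ and has control $\bar\beta_t(X_t)$. In particular the $g_2^*$-cost of $\QQ_1$ equals $\iint g_2^*(t,x,\sigma'(t,x)\bar\beta_t(x))\,\mu_t(\dd x)\,\dd t=m(g_2^*)$, which by the displayed inequality is the value of the $g_2^*$-problem; hence $\QQ_1$ is a minimizer for $g_2^*$, and uniqueness in Theorem \ref{Thm Main} yields $\QQ_1=\QQ_2$. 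This settles $q=1$.

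For $q\ge2$ I would exhibit a concrete counterexample. Take $q=2$, $b\equiv0$, $\sigma\equiv I$ (so $\PP$ is planar Brownian motion started from $m_0=\mathcal N(0,I)$), and $\mu_t=\mathcal N(0,\Sigma_t)$ with $\Sigma_t=I+t\left(\begin{smallmatrix}0&\e\\ \e&0\end{smallmatrix}\right)$ for a small $\e\neq0$ with $T\e<1$; this flow is not the Brownian one. Compare the isotropic cost $g_1^*(y)=|y|^2$ with the anisotropic quadratic cost $g_2^*(y)=\langle Ay,y\rangle$, $A=\mathrm{diag}(1,2)$; both are even, strictly convex, vanish only at the origin, have quadratic growth and satisfy the doubling-type bounds in {\bf(C)}. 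For Gaussian competitors $\dd X_t=B_tX_t\,\dd t+\dd W_t$ the constraint $\QQ\circ X_t^{-1}=\mu_t$ becomes the Lyapunov equation $B_t\Sigma_t+\Sigma_tB_t'=\dot\Sigma_t-I$, whose general solution is $B_t=\big(\tfrac12(\dot\Sigma_t-I)+c_tJ\big)\Sigma_t^{-1}$ with $J=\left(\begin{smallmatrix}0&-1\\ 1&0\end{smallmatrix}\right)$ and a free scalar $c_t$; the cost rate to minimize is $\tr(B_t'MB_t\Sigma_t)$ with $M=I$, resp.\ $M=A$. A short computation gives $\tr\big(\Sigma_t^{-1}(\dot\Sigma_t-I)J\big)=0$, so the optimal rotation is $c_t\equiv0$ when $M=I$, whereas when $M=A$ it is $c_t=-\e(1+t)/6\neq0$. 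The two optimal drift matrices $B_t$ therefore differ, hence so do the two optimal Gaussian path-measures (already $\mathrm{Cov}(X_0,X_t)$ is different), which proves the dependence on $g^*$.

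The main obstacle is the one step I skipped in the $q\ge2$ example: showing that these Gaussian processes are really the minimizers over all of $\Q(\mu)$, not merely over linear-drift competitors. The natural route is via the duality of Theorem \ref{Thm Main}: for a quadratic cost one has $g(t,x,y)=\tfrac14\langle M^{-1}y,y\rangle$, and plugging quadratic functions $w(t,x)=\langle S_tx,x\rangle+r_t$ into \eqref{eq dual extendido} reduces the dual to a matrix Riccati-type optimization whose value one matches with the Gaussian primal value; ``no duality gap'' together with the uniqueness statement then pins the Gaussian process down as the minimizer. This is the familiar Gaussian--Schr\"odinger-bridge reduction, but making it rigorous under {\bf(C)} --- in particular handling the non-compactly supported $w$ through the extended feasible set $L^g_\nabla$, and verifying that the Riccati problems for $M=I$ and $M=A$ indeed have the asserted minimizers --- is where the real effort goes. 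A minor secondary point is the a.e.-$t$ integrability $\hat b_t\in L^1(\mu_t)$ used in the one-dimensional argument, which follows from {\bf(A)}, boundedness of $a$, invertibility of $\sigma$ and {\bf(C3)}.
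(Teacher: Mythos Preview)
Your one-dimensional argument is correct and is essentially the mechanism the paper invokes by citation to \cite{CattLeo_AIHP}: in one space dimension the Fokker--Planck relation determines the Markovian drift uniquely (your observation that a finite signed measure annihilating every $\phi'$ with $\phi\in C_c^\infty(\RR)$ must vanish is exactly the one-dimensional point), and then Jensen together with the feedback form \eqref{formula crucial} and the uniqueness clause of Theorem~\ref{Thm Main} yield cost-independence. You spell this out more fully than the paper, which simply refers to the cited proposition.

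For $q\ge2$ the paper takes a different route. Instead of computing two optimizers and comparing them, it argues by \emph{obstruction}: it sets up $\PP$ with drift $\nabla B$ and stationary marginals, so that the entropy optimizer $\QQ^{ent}$ is known explicitly, and then shows $\QQ^{ent}$ \emph{cannot} simultaneously be the optimizer for $g^*(y)=\|y\|^3$. By Theorem~\ref{Thm Main} the cubic-cost optimizer has drift $-\Psi/\sqrt{\|\Psi\|}$ with $\Psi\in L^g_\nabla$; matching this to $-\nabla B$ forces $\Psi=\|\nabla B\|\,\nabla B$, and since $\Psi$ is a limit of gradients it must be (weakly) curl-free, a condition on $B$ that is written out and then violated by choosing $B(x,y)=p(x)q(y)$. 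This sidesteps precisely the step you flag as the obstacle: one never has to identify the second optimizer, only rule out that it equals the first.

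Your Gaussian construction is nevertheless sound, and has the merit of staying inside Assumption~{\bf(B)} (the paper's own example uses $\sigma$-finite marginals, as it acknowledges). The gap you identify can in fact be closed via Proposition~\ref{prop equiv}: the first-order condition of your constrained linear minimization, $\tr(B_t'MJ)=0$, is in two dimensions equivalent to $MB_t$ being symmetric; hence $\Psi(t,x):=-2MB_tx$ is a linear field with symmetric matrix part, which one checks lies in $L^g_\nabla$ by cutting off the quadratic potential $\tfrac12 x'(-2MB_t)x$ and using Gaussian tails. Since your Lyapunov equation is exactly \eqref{eq FP} for this $\Psi$, Proposition~\ref{prop equiv} certifies it as the unique dual optimizer, and Theorem~\ref{Thm Main} then identifies your Gaussian law as the unique primal optimizer. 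So your approach works, but the paper's obstruction argument reaches the conclusion with less computation and without having to place the candidate $\Psi$ inside $L^g_\nabla$.
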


The fact that the optimizer is universal for dimension one, and that otherwise the optimizer does depend on the cost criterion, attests to the richness of the problem. 

\subsection*{Comparison with the literature}

Problem \eqref{eqprimal extendido} was first analyzed in Mikami's \cite{Mikami_FP}. Unlike in that article, we treat the subject directly, rather than as a limiting problem where only finitely many marginals are prescribed. This is the main methodological difference between the two works. In particular, this allows us to obtain duality directly with a continuum of prescribed marginals. The emphasis on duality theory allows us to relax the requirements on the cost function $g^*$, which in \cite{Mikami_FP} is assumed to be rather smooth owing to the use of PDE theory (strong solutions thereof). We also cover the case where $\PP$ is a diffusion law, rather than just Wiener measure; in particular, we make no use of uniform ellipticity. Other important differences are: the treatment of applications outside of the realm of stochastic mechanics (they will be given in Section \ref{sec applications} below), and a detailed study of the universality and Markovianity of the optimal primal solutions. In this last regard, we answer an open question in \cite{Mikami_FP} to the negative. 

Our duality approach is closest to Cattiaux \& L\'eonard's \cite{CattLeo}, where the entropic case is dealt with. Unlike in that article however, we do not use large deviations arguments but only duality and stochastic control techniques, and we cover generalized entropies rather than the relative entropy only. We also make use of backwards SDE techniques as in the works of Drapeau, Kupper, Tangpi and others \cite{minsupersol,dualrepminsupersol,markovminsupersol}.

A number of well studied problems in the literature share a similar nature with Problem \eqref{eqprimal extendido}. For instance in the works on \emph{Markovian projections} of Semimartingales by Bentata, Brunick, Cont, Gy\"ongy, Shreve \cite{Gyongy-mimicking,Gyongy-mimicking2,Shreve-mimicking,BeCo15} among others. On a similar note, this is close to the so-called \emph{Peacock problem} explored by  Kellerer \cite{Ke73}, Lowther \cite{Lo08b}, Hirsch \& Profeta \& Yor \cite{HiPr11}, Beiglb\"ock \& Huesmann \& Stebbeg \cite{BeHuSt15}, Juillet \cite{Ju16}, K\"allblad \& Tan \& Touzi \cite{KaTaTo17}, and many other authors: given a continuum of marginals in increasing convex order, does there exist a \emph{simple} martingale (eg.\ Markovian) having them as marginals? Another close cousin of Problem \eqref{eqprimal extendido} is the celebrated \emph{Schr\"odinger problem} (also called entropic optimal transport), wherein only initial and final marginal distributions are prescribed: we refer to the survey by L\'eonard \cite{Le14} for a detailed historical account and to the works by Backhoff, Benamou, Carlier, Chen, Confroti, Cuturi, Gentil, Georgiou, L\'eonard, Nenna, Pammer, Pavon, Peyr\'e \cite{Cu13,BeCaCuNePe15,ChGePa16,Co19,BaCoGeLe19,Co19,BaPa20} for a sample of recent developments. By mixing the Scrh\"odinger problem with \eqref{eqprimal extendido} in the entropic case, one obtains the so-call Bredinger Problem, which can be seen as a regularized version of Brenier's incompressible fluid model \cite{Br89,Br93}; See the works by Arnaudon, Baradat, Benamou, Carlier, Cruzeiro, L\'eonard, Monsaingeon, Nenna, Zambrini \cite{ArCrLeZa17,BaMo19,BeCaNe19,Ba20}.

\subsection*{Outline} First we provide in Section \ref{sec applications} applications for the results hitherto obtained, namely for Mean-Field games and non-exponential large deviations of empirical flows. The rest of the article is devoted to the proofs of the main result and its corollaries. In Section \ref{sec primals} we look in depth at the primal problem. In Section \ref{duality} we introduce the dual problem(s). In Section \ref{No gap} we establish the absence of duality gap. In Section \ref{proof main result} we prove the main theorem. Finally in Section \ref{sec: ex} we provide important (counter)examples and complete the proofs of the main corollaries.

\section{Applications}\label{sec applications}
So far we have worked with a fixed flow of marginals $\mu$, in this part we shall let $\mu$ vary. The notation so far has been set up to deal with this situation.

\subsection{McKean-Vlasov control and Mean-Field games of potential type}

Let us write 
\begin{equation}
\Q: = \bigcup_{\mu\text{ satisfying \bf{(B)}}} \Q(\mu).\label{Q curly}
\end{equation}
We consider the following McKean-Vlasov control problem in canonical space (i.e.\ in weak formulation):
\begin{align}
\label{MKV}\tag{$MKV_0$}
\inf\left\{ \EE^{\QQ}\left[\int_0^T g^*(t,X_t,\sigma'(t,X_t)\beta^\QQ_t(X))  \dd t\,\right ] + \int_0^T R_t[\QQ\circ X_t^{-1}]\dd t\, :\,\,  \QQ\in \Q\,  \right\}.
\end{align}
Here $$(t,m)\ni [0,T]\times \mathcal{P}(\RR^q)\mapsto R_t[m]\in\RR_+,$$ is assumed measurable. We have the following technical result whose straightforward proof we omit.

\begin{lemma}\label{lem MKV}
Problem \eqref{MKV} is equivalent to
\begin{align}\label{MKV alt}\tag{MKV}
\inf\left\{ \text{value}\eqref{eqprimal extendido} + \int_0^T R_t[\mu_t]\dd t\, :\, \mu=\{\mu_t\}_t \text{ satisfies Assumption }\bf{(B)} \right\}.
\end{align}
In particular: $\mu$ is an optimizer for \eqref{MKV alt} and $\QQ$ is an optimizer for \eqref{eqprimal extendido} iff $\QQ$ is an optimizer for \eqref{MKV} and the marginals of $X$ under $\QQ$ are given by $\mu$.
\end{lemma}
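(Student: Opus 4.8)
The plan is to prove Lemma \ref{lem MKV} by a straightforward partitioning of the feasible set of \eqref{MKV} according to the flow of marginals that each competitor induces. Concretely, I would first observe that any $\QQ\in\Q$ belongs, by definition of $\Q$ in \eqref{Q curly}, to $\Q(\mu)$ for the specific choice $\mu_t:=\QQ\circ X_t^{-1}$, and that this $\mu$ automatically satisfies Assumption \textbf{(B)}: each $\mu_t$ is a Borel probability on $\RR^q$, and $t\mapsto\mu_t$ is weakly continuous because the canonical process $X$ has (a.s.) continuous paths and is, under $\QQ$, an It\^o-type semimartingale with bounded quadratic-variation density $a$ and an absolutely continuous drift part, so $t\mapsto X_t$ is continuous in probability, hence $t\mapsto\mu_t$ is weakly continuous. (The normalization $\mu_0=m_0$ in \textbf{(B)} should be read as part of the constraint defining $\Q(\mu)$; if one insists on it, one simply restricts attention to those $\QQ\in\Q$ with $\QQ\circ X_0^{-1}=m_0$, which changes nothing in what follows.)

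Next I would rewrite the objective in \eqref{MKV}. For a fixed $\QQ\in\Q(\mu)$ the second term $\int_0^T R_t[\QQ\circ X_t^{-1}]\,\dd t=\int_0^T R_t[\mu_t]\,\dd t$ depends on $\QQ$ only through $\mu$, while the first term $\EE^{\QQ}[\int_0^T g^*(t,X_t,\sigma'(t,X_t)\beta^{\QQ}_t)\,\dd t]$ is exactly the objective of \eqref{eqprimal extendido}. Hence
\[
\inf_{\QQ\in\Q}\Big\{\cdots\Big\}
=\inf_{\mu\text{ satisfies \textbf{(B)}}}\ \inf_{\QQ\in\Q(\mu)}\Big\{\EE^{\QQ}\big[\textstyle\int_0^T g^*(t,X_t,\sigma'\beta^{\QQ}_t)\dd t\big]+\textstyle\int_0^T R_t[\mu_t]\dd t\Big\},
\]
and since for fixed $\mu$ the additive term $\int_0^T R_t[\mu_t]\,\dd t$ is a constant that can be pulled out of the inner infimum, the inner infimum equals $\text{value}\eqref{eqprimal extendido}+\int_0^T R_t[\mu_t]\,\dd t$. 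Taking the outer infimum over $\mu$ satisfying \textbf{(B)} gives precisely \eqref{MKV alt}. This establishes the equality of values; it also shows both infima are over nonempty index sets whenever $\Q\neq\emptyset$, and uses the convention $\text{value}\eqref{eqprimal extendido}=+\infty$ when $\Q(\mu)=\emptyset$, consistent with the displayed interchange of infima.

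For the equivalence of optimizers I would argue: if $\QQ^\star$ is optimal for \eqref{MKV} with marginals $\mu^\star$, then the chain of equalities above is saturated at every step, so $\mu^\star$ attains the outer infimum in \eqref{MKV alt}, i.e.\ $\mu^\star$ is optimal for \eqref{MKV alt}, and $\QQ^\star$ attains the inner infimum $\inf_{\QQ\in\Q(\mu^\star)}$, i.e.\ $\QQ^\star$ is optimal for \eqref{eqprimal extendido} with $\mu=\mu^\star$. Conversely, if $\mu^\star$ is optimal for \eqref{MKV alt} and $\QQ^\star\in\Q(\mu^\star)$ is optimal for $(P^{ext}[\mu^\star])$, then reading the chain backwards shows $\QQ^\star$ achieves value\eqref{MKV} and has marginals $\mu^\star$; and any optimizer of \eqref{MKV} has marginals realizing the optimum of \eqref{MKV alt}, which gives the stated ``iff''.

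There is essentially no obstacle here: the only point demanding a word of care is the verification that the marginal flow of any admissible $\QQ$ satisfies the weak-continuity requirement in \textbf{(B)} — which is immediate from the pathwise continuity built into the canonical space $\Omega=C([0,T];\RR^q)$ together with dominated convergence — so that the partition of $\Q$ by $\mu$ exactly matches the index set of the outer infimum in \eqref{MKV alt}. Everything else is the elementary identity $\inf_{(\mu,\QQ)}=\inf_\mu\inf_{\QQ}$ together with the fact that $R$ contributes a $\mu$-measurable, $\QQ$-independent constant to the inner problem; this is why the lemma's proof is, as stated, omitted as straightforward.
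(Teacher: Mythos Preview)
Your proposal is correct and is exactly the kind of ``straightforward'' argument the paper has in mind; indeed the paper explicitly omits the proof, and your partition of $\Q$ by marginal flows followed by the elementary interchange $\inf_{\QQ\in\Q}=\inf_{\mu}\inf_{\QQ\in\Q(\mu)}$ is precisely the intended reasoning. The only place requiring a word of care --- weak continuity of $t\mapsto\QQ\circ X_t^{-1}$ --- you handle correctly via pathwise continuity on $\Omega=C([0,T];\RR^q)$.
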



The goal of this part of the article is to illustrate the use of Theorem \ref{Thm Main} to obtain that the ``optimal control'' $\beta$ is of Markovian feedback form. The same will be true for associated Mean Field games that we will introduce shortly. We stress that this is then a purely variational argument for the existence of optimal Markov controls, as opposed to analytical arguments. We refer to \cite{buckdahn2009mean,buckdahn2011general,andersson2011maximum,meyer2012mean,BFY,CD_AP,CD_AP,AcBaCa19} for references on McKean-Vlasov control (also known as mean-field control), to \cite{CarmonaDelarue_book_vol_I} for extensive references on mean-field games, to the works of Lacker \cite{La15,La17} for the general question of existence of Markovian optimizers, and to \cite{AcBaJi20} for dynamic potential games. We make all simplifying assumptions necessary to keep technicalities at a minimum.

\begin{proposition}\label{prop MKV}
Assume that \eqref{MKV} is finite, and that: $$\text{for all }t\,:\,\,\, m\mapsto R_t[m] \text{ is lower-semicontinuous}.$$ Then Problem \eqref{MKV} has an optimizer $\QQ$ for which the associated optimal control is Markov: $\beta^{\QQ}_t(X)=\beta^{\QQ}_t(X_t)$ for each $t$.
\end{proposition}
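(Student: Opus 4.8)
The plan is to combine the reduction provided by Lemma \ref{lem MKV} with a direct-method (compactness) argument carried out at the level of the marginal flows, and then to read off the Markovian feedback form of the optimal control from the representation \eqref{formula crucial} in Theorem \ref{Thm Main}. By Lemma \ref{lem MKV} it suffices to exhibit a flow $\mu^\star=\{\mu^\star_t\}_t$ satisfying \textbf{(B)} which attains the infimum in \eqref{MKV alt}. Granting this, since $\text{value}(P^{ext}[\mu^\star])$ and $\int_0^T R_t[\mu^\star_t]\,\dd t$ are both nonnegative and sum to the finite value of \eqref{MKV}, we have $\text{value}(P^{ext}[\mu^\star])<\infty$; hence Theorem \ref{Thm Main} yields a unique optimizer $\QQ$ of $P^{ext}[\mu^\star]=\eqref{eqprimal extendido}$, which by Lemma \ref{lem MKV} is an optimizer of \eqref{MKV}. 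For this $\QQ$, \eqref{formula crucial} says that the drift of $X$ is $b(t,X_t)-\sigma(t,X_t)\nabla g(t,X_t,\sigma'(t,X_t)\Psi(t,X_t))$, i.e.\ the drift correction $a(t,X_t)\beta^\QQ_t$ equals $-\sigma(t,X_t)\nabla g(t,X_t,\sigma'(t,X_t)\Psi(t,X_t))$; since $\sigma$ is invertible by \textbf{(A)}, this forces $\beta^\QQ_t=-\sigma'(t,X_t)^{-1}\nabla g\big(t,X_t,\sigma'(t,X_t)\Psi(t,X_t)\big)$, which is a Borel function of $(t,X_t)$ only. Thus $\beta^\QQ_t(X)=\beta^\QQ_t(X_t)$, as claimed.

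To produce $\mu^\star$, I would take a minimizing sequence $\mu^n$ for \eqref{MKV alt} with $\text{value}(P^{ext}[\mu^n])+\int_0^T R_t[\mu^n_t]\,\dd t\le V+1$, where $V:=\text{value}\eqref{MKV alt}<\infty$, and for each $n$ let $\QQ^n\in\Q(\mu^n)$ be the optimizer of $P^{ext}[\mu^n]$ provided by Theorem \ref{Thm Main}, so that $\EE^{\QQ^n}\big[\int_0^T g^*(t,X_t,\sigma'(t,X_t)\beta^{\QQ^n}_t)\,\dd t\big]=\text{value}(P^{ext}[\mu^n])\le V+1$. The super-linear lower bound in part (3) of \textbf{(C)} gives $c>0$ and $r_0\ge 0$ with $|y|^p\le c\,g^*(t,x,y)+r_0$ for all $(t,x,y)$, whence $\sup_n\EE^{\QQ^n}\big[\int_0^T|\sigma'(t,X_t)\beta^{\QQ^n}_t|^p\,\dd t\big]<\infty$, and, since $a$ (hence $\sigma$) is bounded by \textbf{(A)}, also $\sup_n\EE^{\QQ^n}\big[\int_0^T|a(t,X_t)\beta^{\QQ^n}_t|^p\,\dd t\big]<\infty$. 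Combined with $\langle M\rangle_\cdot=\int_0^\cdot a(t,X_t)\,\dd t$ being bounded and the linear growth of $b$, routine moment estimates for the SDE together with the Aldous/Jakubowski tightness criterion show $\{\QQ^n\}$ is tight on $\Omega=C([0,T];\RR^q)$; here I would lean on the ``simplifying assumptions'' allowed in the statement to dispense with any leftover integrability nuisance, e.g.\ a moment bound on $m_0$.

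Passing to a subsequence, $\QQ^n$ converges weakly to some $\QQ$; enlarging the space to carry the relaxed controls $\Lambda^n(\dd t,\dd u):=\delta_{\beta^{\QQ^n}_t(X)}(\dd u)\,\dd t$, which form a tight family by the $L^p$-bound above, one then passes to the limit in the martingale problem by standard weak-convergence / relaxed-control arguments (in the style of Lacker's work on Markovian optimizers): the limit satisfies a relaxed martingale problem with diffusion coefficient $a$, and by convexity of $g^*$ in the last argument (Jensen applied to the barycenter of the relaxed control) the relaxed control may be replaced by a genuine predictable $\beta^\QQ$ without increasing $\EE[\int_0^T g^*\,\dd t]$, so that $\QQ\in\Q(\mu)$ with $\mu_t:=\QQ\circ X_t^{-1}$. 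Since $\mu^n_t=\QQ^n\circ X_t^{-1}\Rightarrow\mu_t$ for each $t$, weak continuity of $t\mapsto\mu_t$ follows from dominated convergence along continuous paths and $\mu_0=m_0$ because $\mu^n_0=m_0$, so $\mu$ obeys \textbf{(B)}. Finally, lower semicontinuity of $\QQ\mapsto\EE^\QQ[\int_0^T g^*\,\dd t]$ along this limit (again via convexity of $g^*$ and Fatou) and the assumed lower semicontinuity of each $R_t$ (plus Fatou in $t$, using $R\ge 0$) give $\text{value}(P^{ext}[\mu])+\int_0^T R_t[\mu_t]\,\dd t\le\EE^\QQ[\int_0^T g^*\,\dd t]+\int_0^T R_t[\mu_t]\,\dd t\le\liminf_n\big(\text{value}(P^{ext}[\mu^n])+\int_0^T R_t[\mu^n_t]\,\dd t\big)=V$, so $\mu^\star:=\mu$ attains \eqref{MKV alt}. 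I expect the main obstacle to be exactly this last part: the tightness of $\{\QQ^n\}$ together with the stability of the ``martingale problem $+$ marginal constraints'' structure under weak limits — equivalently, the lower semicontinuity of $\mu\mapsto\text{value}(P^{ext}[\mu])$ along the minimizing sequence — all while keeping the cost controlled; the reduction and the Markovian conclusion are then immediate from Lemma \ref{lem MKV} and Theorem \ref{Thm Main}.
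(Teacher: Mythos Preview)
Your argument is correct and follows essentially the same route as the paper: take a minimizing sequence of path measures, extract a weak limit, verify that the limit lies in $\Q$ and is optimal by lower semicontinuity, and then invoke Theorem~\ref{Thm Main} to obtain the Markovian feedback form of $\beta^{\QQ}$. The one structural difference is that the paper works directly with $(1/n)$-optimizers $\QQ^n$ of \eqref{MKV} on path space, whereas you first pass through the marginal-flow reformulation \eqref{MKV alt} and only then pull back to $\QQ^n$; since Lemma~\ref{lem MKV} makes the two problems equivalent, this is immaterial.

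Where your proof becomes substantially longer is in the compactness and stability step. The paper packages tightness of $\{\QQ^n\}$, closedness of $\Q$ under weak limits, and lower semicontinuity of $\QQ\mapsto\EE^\QQ\!\int g^*\,\dd t$ into a single appeal to Lemma~\ref{lem folk} (inf-compactness and lsc of $\tilde I$), which is stated as folklore and proved elsewhere. You instead redo these by hand: explicit $L^p$ bounds from \textbf{(C)}.3, Aldous tightness, relaxed controls, and a Jensen/Fatou argument. This is correct and self-contained, but once Lemma~\ref{lem folk} is available it is unnecessary --- you can replace the entire second and third paragraphs by two lines. Conversely, your version has the merit of showing concretely what goes into the ``folklore'' lemma.
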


\begin{proof}
Let $\QQ^n$ be $(1/n)$-optimizers for \eqref{MKV}. It follows that $$\EE^{\QQ^n}\left[\int_0^T g^*(t,X_t,\sigma'(t,X_t)\beta^{\QQ^n}_t(X))  \dd t\,\right ]\leq 1+ value\eqref{MKV}.$$ 
By Lemma \ref{lem folk}, $\{\QQ^n\}_n$ is tight. We denote by $\QQ$ an accumulation point. Again by this lemma we deduce $\QQ\in\Q$. Analogously, and due to the assumption on $R$ (plus Fatou's lemma), we derive  the lower-semicontinuity of the objective function. This implies the optimality of $\QQ$ for \eqref{MKV}. Denoting $\mu$ the flow of marginals of this measure, we clearly have that $\mu\text{ satisfyies \bf{(B)}}$, and necessarily $\QQ$ is optimal for \eqref{eqprimal extendido}.  By Theorem \ref{Thm Main}, the associated $\beta^{\QQ}$ is of the desired form.
\end{proof}\\

From now on we assume that $R$ is differentiable, meaning that the following directional derivatives exist 
$$\lim_{\epsilon\to 0+}\frac{R_t[m+\epsilon(\bar{m}-m)]-R_t[m]}{\epsilon} = \int \nabla R_t[m](y)(\bar{m}-m)(\dd y),$$
along with a bounded measurable function $\nabla R_t[m]:\RR^q\to\RR$. 

We consider the following Mean Field game (MFG) of potential form on canonical space (this is again a weak formulation):  Find $(\QQ,\mu)$ such that
\begin{enumerate}
\item[(1)] $\QQ$ attains $$\inf\left \{ \EE^{\tilde{\QQ}}\left[ \int_0^T \left ( g^*(t,X_t,\sigma'(t,X_t)\beta^{\tilde{\QQ}}_t(X))+\nabla R_t[\mu_t](X_t) \right ) \dd t\, \right ] \,:\, \tilde{\QQ}\in \Q \right\},$$
\item[(2)] $\QQ\circ X_t^{-1}=\mu_t$ for all $t\in [0,T]$.
\end{enumerate}

Leveraging on Proposition \ref{prop MKV}, we prove the existence of a Mean Field equilibrium where the optimal control $\beta^{\QQ}$ is Markovian.

\begin{proposition}\label{prop MFG}
Under the conditions in Proposition \ref{prop MKV}, and the differentiability assumption on $R$, let $\QQ$ be any optimizer for \eqref{MKV} and $\mu_t:=\QQ\circ X_t^{-1}$. Then $(\QQ,\mu)$ is a solution (i.e.\ an equilibrium) to the Mean Field game $(1)$-$(2)$ above, and the associated control $\beta^{\QQ}$ is Markov. 
\end{proposition}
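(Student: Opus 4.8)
The plan is to show that the ``potential'' structure makes the Mean Field game equilibrium condition reduce, at the chosen candidate, to the optimality condition for \eqref{MKV}. Concretely, I would fix an optimizer $\QQ$ of \eqref{MKV} (which exists by Proposition \ref{prop MKV}) and set $\mu_t := \QQ\circ X_t^{-1}$; by Lemma \ref{lem MKV}, $\QQ$ is then optimal for \eqref{eqprimal extendido} and condition (2) in the definition of the MFG holds by construction, so the only thing to prove is condition (1): that with the frozen flow $\mu$, $\QQ$ minimizes $$\tilde{\QQ}\mapsto \EE^{\tilde{\QQ}}\left[\int_0^T\left(g^*(t,X_t,\sigma'(t,X_t)\beta^{\tilde{\QQ}}_t(X))+\nabla R_t[\mu_t](X_t)\right)\dd t\right]$$ over $\tilde{\QQ}\in\Q$.

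For this linearized problem the term $\int_0^T\nabla R_t[\mu_t](X_t)\dd t$ is an additive, control-independent cost of the exact same type as the running cost $R_t[m]$ appearing in \eqref{MKV}, with $R_t[m]$ replaced by the linear functional $m\mapsto \int \nabla R_t[\mu_t](y)\,m(\dd y)$, which is continuous (indeed weakly continuous, $\nabla R_t[\mu_t]$ being bounded measurable — here I would invoke weak continuity after a standard approximation, or simply note lower-semicontinuity suffices as in Proposition \ref{prop MKV}). Hence, again by Lemma \ref{lem MKV}, minimizing this linearized objective over $\Q$ is equivalent to minimizing $\mathrm{value}\eqref{eqprimal extendido}+\int_0^T\int\nabla R_t[\mu_t](y)\,\nu_t(\dd y)\,\dd t$ over flows $\nu$ satisfying \textbf{(B)}. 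The key step is then a convexity/first-order-optimality argument: since $m\mapsto R_t[m]$ is differentiable in the stated directional sense and $\mu$ is a minimizer of the convex-in-flow functional $\nu\mapsto\mathrm{value}\eqref{eqprimal extendido}+\int_0^T R_t[\nu_t]\dd t$ (convexity of $\nu\mapsto\mathrm{value}\eqref{eqprimal extendido}$ coming from convexity of $g^*$ and linearity of the marginal constraints in $\QQ$), the directional derivative of this functional at $\mu$ in any admissible direction $\nu-\mu$ is nonnegative; computing that derivative and using the differentiability formula for $R$ shows that $\mu$ also minimizes the linearized functional $\nu\mapsto\mathrm{value}\eqref{eqprimal extendido}+\int_0^T\int\nabla R_t[\mu_t](y)\,\nu_t(\dd y)\,\dd t$. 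Translating back via Lemma \ref{lem MKV}, the corresponding optimal $\QQ$ for the linearized problem is exactly our $\QQ$ (uniqueness of the optimizer of \eqref{eqprimal extendido} given the marginals, from Theorem \ref{Thm Main}, pins this down), establishing (1). Finally, the Markovianity of $\beta^\QQ$ is inherited directly from Proposition \ref{prop MKV} (or re-derived from Theorem \ref{Thm Main} applied to $\QQ$ as optimizer of \eqref{eqprimal extendido} with marginals $\mu$), since \eqref{formula crucial} exhibits the drift, hence $\beta^\QQ$, as a function of $(t,X_t)$ only.

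The main obstacle I anticipate is making the ``first-order optimality $\Rightarrow$ optimality for the linearized problem'' step fully rigorous at the level of flows of measures: one must ensure the set of admissible flows $\{\nu:\nu\text{ satisfies }\textbf{(B)}\}$ is convex (clear) and that for a minimizer $\mu$ and any competitor $\nu$ with finite linearized cost one can take the convex combination $\mu+\epsilon(\nu-\mu)$, evaluate $\mathrm{value}\eqref{eqprimal extendido}$ along it, and pass to the limit $\epsilon\to 0+$ — this requires convexity of $\nu\mapsto\mathrm{value}\eqref{eqprimal extendido}$ together with the differentiability of $R$ to interchange limit and the $R$-term, plus a uniform integrability/lower bound so that the limiting inequality is not vacuous (here assumptions \textbf{(B)} and \textbf{(C)}, and finiteness of \eqref{MKV}, are used). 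A cleaner route, which I would actually present, bypasses explicit differentiation: note that for \emph{any} flow $\nu$ satisfying \textbf{(B)}, the inequality $$\mathrm{value}\eqref{eqprimal extendido}[\nu]+\textstyle\int_0^T\int\nabla R_t[\mu_t]\,\dd\nu_t\,\dd t\;\geq\;\mathrm{value}\eqref{eqprimal extendido}[\mu]+\textstyle\int_0^T\int\nabla R_t[\mu_t]\,\dd\mu_t\,\dd t$$ is precisely the statement that $\mu$ solves the linearized flow problem, and this follows from combining (i) the definition of the directional derivative of $R$, giving $R_t[\nu_t]\geq R_t[\mu_t]+\int\nabla R_t[\mu_t]\,\dd(\nu_t-\mu_t)$ when $m\mapsto R_t[m]$ is, as is natural here (and which I would add as a hypothesis or note follows from differentiability plus convexity of $R_t$), convex — more precisely, the supporting-hyperplane inequality — with (ii) the optimality of $\mu$ for \eqref{MKV alt}. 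Integrating (i) over $t$ and adding $\mathrm{value}\eqref{eqprimal extendido}[\nu]$ to both sides, then using (ii) in the form $\mathrm{value}\eqref{eqprimal extendido}[\nu]+\int R_t[\nu_t]\dd t\geq\mathrm{value}\eqref{eqprimal extendido}[\mu]+\int R_t[\mu_t]\dd t$, yields exactly the desired inequality. Then Lemma \ref{lem MKV} converts it into (1) and Proposition \ref{prop MKV} supplies the Markov property, completing the proof.
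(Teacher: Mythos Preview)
Your first route --- taking convex combinations $\mu+\epsilon(\nu-\mu)$, using convexity of $\nu\mapsto\mathrm{value}(P^{ext}[\nu])$, and passing $\epsilon\to 0+$ with the differentiability of $R$ --- is exactly the paper's argument, and it works without any extra hypothesis. The paper writes the chain explicitly: optimality of $\mu$ for \eqref{MKV alt} gives
\[
\mathrm{value}(P^{ext}[\mu])+\int_0^T R_t[\mu_t]\,\dd t \;\le\; \mathrm{value}(P^{ext}[\mu+\epsilon(\nu-\mu)])+\int_0^T R_t[\mu_t+\epsilon(\nu_t-\mu_t)]\,\dd t,
\]
then convexity of $\mathrm{value}(P^{ext}[\cdot])$ bounds the first term on the right by $(1-\epsilon)\,\mathrm{value}(P^{ext}[\mu])+\epsilon\,\mathrm{value}(P^{ext}[\nu])$; rearranging and sending $\epsilon\to 0+$ (dominated convergence on the $R$-term, using boundedness of $\nabla R$) yields the linearized optimality inequality directly. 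So the ``obstacle'' you flag is handled precisely by the convexity of $\mathrm{value}(P^{ext}[\cdot])$ that you already noted.

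Your preferred ``cleaner route'', however, needs convexity of each $m\mapsto R_t[m]$ in order to invoke the supporting-hyperplane inequality $R_t[\nu_t]\ge R_t[\mu_t]+\int\nabla R_t[\mu_t]\,\dd(\nu_t-\mu_t)$. The paper does \emph{not} assume this: only lower-semicontinuity (for existence in Proposition~\ref{prop MKV}) and directional differentiability of $R$ are hypothesized. So presenting the second route would genuinely weaken the statement. The whole point of the paper's argument is that the convexity burden is carried entirely by $\mathrm{value}(P^{ext}[\cdot])$ (which is automatic from the structure of the primal, or from duality), so that no convexity of $R$ is required. Keep your first route and drop the second.
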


\begin{proof}
Let $\QQ$ as in Proposition \ref{prop MKV}, with marginals $\mu$. By Lemma \ref{lem MKV} we have 
\begin{align*}& value(P^{ext}[\mu])+\int_0^T R_t[\mu_t]\dd t \\ 
\leq & \, value(P^{ext}[\mu + \epsilon(\bar{\mu}-\mu)])+\int_0^T R_t[\mu_t + \epsilon(\bar{\mu}_t-\mu_t)]\dd t \\ 
\leq & \, \epsilon \, value(P^{ext}[\bar{\mu}])+(1-\epsilon)\,value(P^{ext}[\mu])+\int_0^T R_t[\mu_t + \epsilon(\bar{\mu}_t-\mu_t)] \dd t.
\end{align*}
Indeed, one can see the convexity of $value(P^{ext}[\cdot])$ either directly or as a consequence of the absence of duality gap (Theorem \ref{Thm Main}) since the dual problem $value(D[\cdot])$ is obviously convex. Rearranging we obtain
$$value(P^{ext}[\mu])\leq value(P^{ext}[\bar{\mu}])+\lim_{\epsilon\to 0+}\,\int_0^T \frac{ R_t[\mu_t + \epsilon(\bar{\mu}_t-\mu_t)] - R_t[\mu_t]}{\epsilon}\dd t.$$
By dominated convergence and the differentiability assumption, we deduce
$$\text{value}(P^{ext}[\mu])\leq \text{value}(P^{ext}[\bar{\mu}])+\int_0^T \int \nabla R_t[\mu_t](y)(\bar{\mu}_t-\mu_t)(\dd y)\,\dd t,$$
so
\begin{align*}& value(P^{ext}[\mu])+ \int_0^T\int  \nabla R_t[\mu_t](y) \dd\mu_t(y)\dd t  \\ \leq &  value(P^{ext}[\bar{\mu}])+ \int_0^T\int  \nabla R_t[\mu_t](y) \dd\bar{\mu}_t(y)\dd t . \end{align*}
Since $\bar{\mu}$ is arbitrary, this is clearly equivalent to saying that $(\QQ,\mu)$ is a Mean Field game equilibrium.
\end{proof}

\subsection{A generalized Laplace principle for empirical flow of particles}

We interpret here the value of our primal problem \eqref{eqprimal extendido}, seen as a function of the flow $\mu$, as the rate function of a non-exponential Laplace principle for empirical flow of marginals. In this way we come full circle with the work \cite{CattLeo}, where the authors start from an exponential Laplace principle, and then study \eqref{eqprimal extendido} in the entropic case. Indeed, we do the opposite here, starting from the study of \eqref{eqprimal extendido} and then referring to a non-exponential Laplace principle. Furthermore, we cover situations vastly more general than the entropic case. Our starting point is the work \cite{Lacker16} by Lacker, and its Wiener space specialization \cite{BaLaTa18} by Lacker, Tangpi, and one of the authors. We let $\gamma$ denote the Wiener measure in state space $\mathbb R^q$ and started at the origin, and assume for simplicity that $$g^*(t,x,y)=g^*(y),$$
and that $m_0$ is concentrated on a point (w.l.o.g.\ the origin). We have
\begin{proposition}
Let $F$ be a real-valued, measurable and bounded functional over flows of probability measures, namely $F\in B_b(\, C([0,T];\P(\RR^q))\,)$. Let $\{W^i\}_{i\in\NN}$ distributed like $\gamma^{\otimes\NN}$ and $\{X^i\}_{i\in\NN}$ be the associated i.i.d.\ sequence of solutions to \eqref{SDE}. We consider the following backwards SDE under $\gamma^{\otimes\NN}$:
$$\dd Y^n_t = -g(\sqrt{n}Z_t)\dd t + Z_t \dd W^{(n)}_t\,\,\, ,\,\,\, Y_T^n=nF\left(t\mapsto \frac{1}{n}\sum_{i\leq n}\delta_{X^i_t}\right),$$
where $W^{(n)} $ is the $\gamma^{\otimes\NN}$-Brownian motion obtained by appropriate scaling and consecutive concatenation of $W^1,\dots,W^n$ over the time-index set $[0,T]$. Then
\begin{align*}
\lim_{n\to\infty}\frac{1}{n}Y_0^n &= \sup_\mu\left\{ F\left(t\mapsto \mu_t \right) - \inf_{\QQ\in\Q(\mu)} E^\QQ\left[\int_0^T g^*(\sigma'(t,X_t)\beta^\QQ_t)\dd t\right]\right\} \\ &= \sup_\mu\left\{ \,F\left(t\mapsto \mu_t \right) - \mbox{value}\eqref{eqprimal extendido}\,\right\}.
\end{align*}
\end{proposition}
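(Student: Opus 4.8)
The plan is to read this as a \emph{non-exponential Laplace principle} for the empirical flow of marginals and to deduce it from the general theory of \cite{Lacker16} and its Wiener-space specialization \cite{BaLaTa18}, combined with nothing more than the definition of \eqref{eqprimal extendido}. First I would fix $\Phi(\QQ):=F\big(t\mapsto\QQ\circ X_t^{-1}\big)$ for $\QQ\in\P(\Omega)$: since $\QQ\mapsto\big(t\mapsto\QQ\circ X_t^{-1}\big)$ is continuous from $\P(\Omega)$ into $C([0,T];\P(\RR^q))$ and $F$ is bounded measurable, $\Phi$ is bounded and measurable on $\P(\Omega)$, and for the empirical measure $L_n:=\tfrac1n\sum_{i\le n}\delta_{X^i}$ of the i.i.d.\ path sequence one has $\Phi(L_n)=F\big(t\mapsto\tfrac1n\sum_{i\le n}\delta_{X^i_t}\big)$, which is exactly $Y^n_T/n$.

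Next I would record the robust (dual) representation of the BSDE value. The generator $z\mapsto g(\sqrt n z)$ is convex (as $g$ is), and writing it as $\sup_u\{\langle u,z\rangle-g^*(u/\sqrt n)\}$, the theory of minimal supersolutions of convex BSDEs with bounded terminal data and its dual representation \cite{minsupersol,dualrepminsupersol}, after a Girsanov change of measure on $W^{(n)}$, give
\begin{align*}
\tfrac1n Y^n_0=\sup\ \EE\Big[\,F\big(t\mapsto\tfrac1n{\textstyle\sum_{i\le n}}\delta_{X^{i}_t}\big)-\tfrac1n{\textstyle\sum_{i\le n}}\int_0^T g^*\big(\sigma'(t,X^{i}_t)\,\beta^{i}_t\big)\,\dd t\,\Big],
\end{align*}
the supremum running over adapted drift perturbations $\beta=(\beta^1,\dots,\beta^n)$ of the particle system $(X^1,\dots,X^n)$; here the consecutive concatenation of $W^1,\dots,W^n$ into $W^{(n)}$ and the factor $\sqrt n$ inside $g$ are exactly what turn the single $g^*(\cdot/\sqrt n)$-integral into an empirical average of order-one $g^*$-costs, and $\sigma'\beta^i$ is the effective control entering the definition of $\Q(\cdot)$. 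This is the finite-$n$ instance of the situation treated in \cite{Lacker16,BaLaTa18} (after composing with the solution map of \eqref{SDE}, bijective by~{\bf(A)}, so as to work with Brownian controls), and invoking their main results yields
$$\lim_{n\to\iy}\tfrac1n Y^n_0=\sup_{\QQ\in\P(\Omega)}\big\{\Phi(\QQ)-\cH_g(\QQ\,\|\,\PP)\big\},$$
where $\cH_g$ is the generalized relative entropy attached to $g$ (cf.\ \cite{BaLaTa18}), which by Girsanov and~{\bf(A)} equals $\EE^\QQ\!\big[\int_0^T g^*\big(\sigma'(t,X_t)\beta^\QQ_t\big)\,\dd t\big]$ when $\QQ$ solves the martingale problem with coefficients $(b+a\beta^\QQ,a)$ with $\langle M\rangle_\cdot=\int_0^\cdot a(t,X_t)\,\dd t$ and $\QQ\circ X_0^{-1}=m_0$, and $+\iy$ otherwise. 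The upper bound in this limit would come from tightness of near-optimal controlled systems (the estimates used in the proof of Proposition~\ref{prop MKV}), Fatou's lemma and lower semicontinuity of the $g^*$-cost; the lower bound from the law of large numbers applied to $n$ i.i.d.\ copies of the drifted diffusion associated with a nearly optimal $\QQ$ for the right-hand side.

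It then remains to identify the right-hand side with the claimed formula. Every $\QQ\in\P(\Omega)$ has a weakly continuous flow of marginals, so the set of $\QQ\in\P(\Omega)$ with $\cH_g(\QQ\,\|\,\PP)<\iy$ and $\QQ\circ X_t^{-1}=\mu_t$ for all $t\in[0,T]$ is precisely $\Q(\mu)$ for those flows $\mu$ satisfying~{\bf(B)}, and on this set $\cH_g(\QQ\,\|\,\PP)=\EE^\QQ\!\big[\int_0^T g^*\big(\sigma'(t,X_t)\beta^\QQ_t\big)\,\dd t\big]$. Stratifying the supremum over $\QQ$ according to its flow of marginals therefore gives
$$\sup_{\QQ\in\P(\Omega)}\big\{\Phi(\QQ)-\cH_g(\QQ\,\|\,\PP)\big\}=\sup_\mu\Big\{F(t\mapsto\mu_t)-\inf_{\QQ\in\Q(\mu)}\EE^\QQ\Big[\int_0^T g^*\big(\sigma'(t,X_t)\beta^\QQ_t\big)\,\dd t\Big]\Big\},$$
and since $g^*$ does not depend on $(t,x)$ here, the inner infimum is, by the very definition of \eqref{eqprimal extendido}, equal to $\mbox{value}\eqref{eqprimal extendido}$; this produces both displayed equalities.

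The main obstacle is the second step, i.e.\ carrying out faithfully the translation into the framework of \cite{Lacker16,BaLaTa18}: matching the $\sqrt n$-scaling and the concatenation of $W^1,\dots,W^n$ into $W^{(n)}$ with the ``$n$ particles living on $[0,T]$'' picture; verifying that the growth and integrability hypotheses of those theorems follow from~{\bf(A)}--{\bf(C)} (the role of $p>1$ in part~3 of~{\bf(C)} being to guarantee both the dual-representable BSDE structure and the correct form of the limiting rate function); accommodating a diffusion, rather than Wiener, reference law; and handling a merely bounded measurable $F$, for which one passes through upper and lower semicontinuous envelopes and exploits monotonicity of the BSDE value in its terminal datum, as in \cite{Lacker16}.
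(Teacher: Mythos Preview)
Your proposal is correct and follows essentially the same route as the paper: invoke the Wiener-space Laplace principle of \cite{BaLaTa18} as a black box, transfer from Wiener measure to the diffusion law $\PP$ via the strong-solution map $H$ of \eqref{SDE}, and then perform the contraction from path measures $\QQ\in\P(\Omega)$ to flows of marginals $t\mapsto\QQ\circ X_t^{-1}$. The paper orders these as three separate steps (Wiener result, then $H$-pushforward, then contraction), whereas you front-load the contraction by defining $\Phi(\QQ):=F(t\mapsto\QQ\circ X_t^{-1})$ and fold the $H$-transfer into the application of \cite{BaLaTa18}; but the substantive ingredients and the logic are the same.
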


\begin{proof}

{\bf Step 1}: We recall here the crucial result of \cite{BaLaTa18}. 
Let $\tilde{F}\in B_b(\,\P(\,C( [0,T];\RR^q)\,)\,)$. With the same ingredients as in the statement, we have
$$\lim_{n\to\infty} \frac{1}{n}\tilde{Y}_0^n = \sup\left\{\tilde{F}(\tilde{\QQ}) -E^{\tilde{\QQ}}\left[\int_0^T g^*(q^{\tilde{\QQ}}_t)\dd t\right]\,:\,\tilde{\QQ}\,\,s.t.\,\, X_\cdot-\int_0^\cdot q^{\tilde{\QQ}}_t\dd t \text{ is $\tilde{\QQ}$-B.m.} \right\},$$
where $\tilde{Y}^n$ solves the same BSDE as $Y^n$ but with the terminal condition 
$$\tilde{Y}^n_T= n\tilde{F}\left( \frac{1}{n}\sum_{i\leq n}\delta_{W^i} \right ),$$
under $\gamma^{\otimes\NN}$.

{\bf Step 2}: We now move from Wiener measure to the diffusion law $\PP$. Since \eqref{SDE} has a unique strong solution, there is a measurable map $H$ between path-spaces such that $X=H(W)$. For $\hat{F}\in B_b(\,\P(\,C( [0,T];\RR^q)\,)\,)$, we consider $\tilde{F}(\QQ):=\hat{F}(\QQ\circ H^{-1})$. Observe that pointwise $$\tilde{F}\left( \frac{1}{n}\sum_{i\leq n}\delta_{\omega^i} \right ) = \hat{F}\left( \frac{1}{n}\sum_{i\leq n}\delta_{H(\omega^i)} \right ).
$$ 
Notice that $\tilde{\QQ}$ is associated to $q^{\tilde{\QQ}}_t$ iff $\QQ=\tilde{\QQ}\circ H^{-1}$ is associated to $\sigma'(t,X_t)\beta^{\QQ}_t = q_t^{\tilde{\QQ}}$.
This and Step 1 show that
$$\lim_{n\to\infty} \frac{1}{n}\hat{Y}_0^n = \sup\left\{\hat{F}({\QQ}) -E^{{\QQ}}\left[\int_0^T g^*(\sigma'(t,X_t)\beta^{\QQ}_t)\dd t\right]\,:\, \QQ \in\Q \right\},$$
where $\hat{Y}^n$ solves the same BSDE as $Y^n$ but with the terminal condition 
$$\tilde{Y}^n_T= n\hat{F}\left( \frac{1}{n}\sum_{i\leq n}\delta_{X^i} \right ),$$
under $\gamma^{\otimes\NN}$.

{\bf Step 3}: We now change the state space from $\P(\,C( [0,T];\RR^q)\,)$ to $C([0,T];\P(\RR^q))$, much as in the contraction principle in large deviations theory. Let $F$ as in the statement. Then $F$ can be seen as belonging to $B_b(\,\P(\,C( [0,T];\RR^q)\,)\,)$ via the identification
$$\tilde{F}(\QQ)=F(t\mapsto \QQ\circ X_t^{-1}).$$
Applying Step 2 to this $\tilde{F}$ we easily obtain the desired result and finish the proof.
\end{proof}\\

In the entropic case (i.e.\ when $g^*$ is quadratic), this Laplace principle is equivalent to a large deviations principle (LDP) for the same objects. It is unclear whether the above general result can be translated into a LDP of sorts. Nevertheless, we think it is a curious observation that generalized entropy minimization is so closely related to scaling limits of backwards SDEs.

\section{The primal problem}\label{sec primals}


Recall the notation $\Q$ from \eqref{Q curly}. Let 
\begin{align}\label{primalobj ext}
\tilde{I}(\QQ):=\EE^{\QQ}\left [ \int_0^T g^*\left ( t,X_t,\sigma'(t,X_t)\beta_t^{\QQ}\right )\dd t \right ],
\end{align}
 if  $\QQ\in\Q$, and otherwise we set $\tilde{I}(\QQ)=+\infty $. This is our primal objective function.
 
 \begin{lemma}\label{lem folk}
 The function $\tilde{I}$ is strictly convex, lower-semicontinuous with respect to weak convergence, and has tight sub-level sets (i.e.\ $\tilde{I}$ is inf-compact).
 \end{lemma}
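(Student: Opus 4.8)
The plan is to establish the three properties — strict convexity, lower semicontinuity, and inf-compactness — by exploiting the structure of $\tilde I$ as an integral functional of the drift, together with the convexity and coercivity built into assumption \textbf{(C)}. First I would set up the right variational representation: for $\QQ\in\Q$ with generalized drift $b+a\beta^\QQ$, write $\tilde I(\QQ)=\EE^\QQ\big[\int_0^T g^*(t,X_t,\sigma'\beta^\QQ_t)\dd t\big]$ and note that, since $\sigma$ is invertible, the pair $(\QQ,\sigma'\beta^\QQ)$ is equivalent data to $\QQ$ itself; the map $\QQ\mapsto \QQ\circ X_t^{-1}$ and the characterization via the martingale problem are what make $\Q$ (and hence the domain of $\tilde I$) amenable. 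The natural object to convexify against is the pair (measure, finite measure $\QQ(\dd\omega)\,\sigma'\beta^\QQ_t\,\dd t$ on $\Omega\times[0,T]$), along the lines of the classical convex-duality/relaxed-control arguments; I expect to borrow this "joint law" linearization essentially verbatim from Cattiaux--Léonard \cite{CattLeo} or from the displacement-convexity arguments for such entropy functionals.

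\emph{Strict convexity.} Given $\QQ^0,\QQ^1\in\Q$ and $\QQ^\theta=(1-\theta)\QQ^0+\theta\QQ^1$, one checks $\QQ^\theta\in\Q$ (the marginal constraint and the martingale-problem constraints are affine) with drift obtained by the mixing formula $a\beta^{\QQ^\theta}_t=\frac{\dd((1-\theta)\QQ^0+\theta\QQ^1)\,a\beta}{\dd\QQ^\theta}$, i.e.\ $\sigma'\beta^{\QQ^\theta}$ is the conditional average of $\sigma'\beta^{\QQ^0},\sigma'\beta^{\QQ^1}$ under the obvious disintegration. Then Jensen's inequality applied to the \emph{strictly} convex $g^*(t,x,\cdot)$ (assumption \textbf{(C)}) gives $\tilde I(\QQ^\theta)\le(1-\theta)\tilde I(\QQ^0)+\theta\tilde I(\QQ^1)$, with equality forcing $\sigma'\beta^{\QQ^0}=\sigma'\beta^{\QQ^1}$ a.e., hence $\beta^{\QQ^0}=\beta^{\QQ^1}$ (invertibility of $\sigma$), hence $\QQ^0=\QQ^1$ by uniqueness of the martingale problem with those coefficients. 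That yields strict convexity on the (convex) domain, and it is $+\infty$ off the domain.

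\emph{Lower semicontinuity and inf-compactness.} These I would treat together. For a sequence $\QQ^n\to\QQ$ weakly with $\sup_n\tilde I(\QQ^n)=:K<\infty$: the superlinear/coercive lower bound in \textbf{(C)}.3, $g^*(t,x,y)\gtrsim|y|^p$ for large $|y|$ (uniformly in $(t,x)$) up to an integrable correction, turns the uniform bound on $\tilde I(\QQ^n)$ into uniform $L^p$-type control on the drifts $\beta^{\QQ^n}$ under $\QQ^n$; this is exactly the estimate that (i) prevents the drifts from escaping to infinity, giving tightness of $\{\QQ^n\}$ on path space via an Aldous/Kolmogorov criterion, and (ii) lets one pass to the limit. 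For the liminf inequality I would identify the limit's drift: first show $\QQ\in\Q$ (the martingale-problem relations pass to the limit because the test martingales are bounded-continuous functionals, and the marginal constraint is weakly closed), then use lower semicontinuity of integral functionals of the form $(\nu,\rho)\mapsto\int g^*(\,\cdot\,,\tfrac{\dd\rho}{\dd\nu})\dd\nu$ under joint weak convergence of $(\QQ^n, \QQ^n\otimes(\sigma'\beta^{\QQ^n}_t\dd t))$ — a standard lower-semicontinuity theorem for normal convex integrands, for which the de la Vallée-Poussin / uniform-integrability input is again supplied by the $p>1$ superlinearity. Combining, any weak cluster point of a sub-level set lies in that sub-level set, giving both closedness and (with the tightness just shown) inf-compactness.

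\emph{Main obstacle.} The delicate point is the lower-semicontinuity step: one must make sense of, and pass to the limit in, the drifts $\beta^{\QQ^n}$ which live under \emph{different} measures $\QQ^n$, possibly all mutually singular and singular w.r.t.\ $\PP$. The clean way is to lift everything to the occupation-type measures $\Theta^n(\dd\omega,\dd t):=\QQ^n(\dd\omega)\,\sigma'(t,\omega_t)\beta^{\QQ^n}_t(\omega)\,\dd t$ (vector-valued) together with $\QQ^n$, show the pair is tight (using the $p$-growth bound for the first component and the already-argued path tightness for the second), extract a limit $(\Theta,\QQ)$, show $\Theta\ll \QQ(\dd\omega)\dd t$ with density $\sigma'\beta^\QQ$, and only then apply the Ioffe-type lower-semicontinuity theorem. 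Verifying $\Theta$ has the right absolute-continuity and that its density is the drift of the limiting martingale problem is the technical heart; everything else is routine given assumptions \textbf{(A)}--\textbf{(C)}.
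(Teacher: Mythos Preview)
Your plan is sound and follows the standard route; the paper itself gives no argument at all, simply declaring the result ``folklore'' and citing \cite[Theorem 8.3]{BaPa20}. What you outline --- Jensen for the mixed drift to get (strict) convexity, coercivity from \textbf{(C)}.3 for tightness, and an Ioffe-type lower-semicontinuity on the lifted pair $(\QQ,\QQ(\dd\omega)\sigma'\beta^\QQ_t\dd t)$ --- is precisely the content behind such references, so in substance you are supplying what the paper omits.

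One point to tighten in the strict-convexity step: after Jensen, you obtain $\sigma'\beta^{\QQ^0}=\sigma'\beta^{\QQ^1}$ only $\QQ^\theta\otimes\dd t$-a.e.\ on the set where both Radon--Nikodym weights are positive, and the phrase ``uniqueness of the martingale problem with those coefficients'' is not quite the right tool since the drifts are path functionals. The clean conclusion is via the density processes $Z^i_t=\dd\QQ^i|_{\F_t}/\dd\QQ^\theta|_{\F_t}$: both equal $1$ at $t=0$ (common initial law $m_0$), both are continuous $\QQ^\theta$-martingales with $\dd Z^i_t=Z^i_t(\beta^{\QQ^i}_t-\beta^{\QQ^\theta}_t)'\dd M^\theta_t$, and the Jensen equality forces the increments to vanish on $\{Z^0>0,\,Z^1>0\}$, whence $Z^0\equiv Z^1\equiv 1$. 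This is a minor refinement, not a gap.
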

 \begin{proof}
 This is folklore. It readily follows e.g.\ from \cite[Theorem 8.3]{BaPa20}.
 \end{proof}
 
 We now  prove that \eqref{eqprimal extendido} is attained.

\begin{lemma}\label{lem tightness -}
If $\text{value}\eqref{eqprimal extendido}<\infty$, this problem has a unique optimizer.
\end{lemma}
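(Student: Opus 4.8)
The plan is to combine the structural properties of $\tilde I$ established in Lemma \ref{lem folk} with a direct method argument, and then use strict convexity to upgrade existence to uniqueness. The claim is precisely about the restriction of $\tilde I$ to $\Q(\mu)$, since $\text{value}\eqref{eqprimal extendido}=\inf_{\QQ\in\Q(\mu)}\tilde I(\QQ)$ and $\tilde I\equiv+\infty$ off $\Q$.

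First I would take a minimizing sequence $\QQ^n\in\Q(\mu)$, i.e.\ $\tilde I(\QQ^n)\downarrow \text{value}\eqref{eqprimal extendido}<\infty$. Then for $n$ large, $\tilde I(\QQ^n)\leq \text{value}\eqref{eqprimal extendido}+1=:c<\infty$, so $\{\QQ^n\}$ lies in a sub-level set of $\tilde I$, which is tight by Lemma \ref{lem folk} (inf-compactness). By Prokhorov, pass to a weakly convergent subsequence $\QQ^{n_k}\rightharpoonup \QQ$. Lower semicontinuity of $\tilde I$ (again Lemma \ref{lem folk}) gives $\tilde I(\QQ)\leq\liminf_k \tilde I(\QQ^{n_k})=\text{value}\eqref{eqprimal extendido}$; in particular $\tilde I(\QQ)<\infty$, so $\QQ\in\Q$. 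It remains to check $\QQ\in\Q(\mu)$, i.e.\ that the marginal constraints survive the limit: since $X_t$ is continuous and bounded-continuous functions are preserved under weak convergence, $\QQ^{n_k}\circ X_t^{-1}=\mu_t$ for all $k$ forces $\QQ\circ X_t^{-1}=\mu_t$ for every $t$. Hence $\QQ\in\Q(\mu)$ and $\tilde I(\QQ)=\text{value}\eqref{eqprimal extendido}$, so $\QQ$ is an optimizer.

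For uniqueness, suppose $\QQ_1\neq\QQ_2$ are both optimizers. The set $\Q(\mu)$ is convex: a convex combination $\QQ_\theta:=\theta\QQ_1+(1-\theta)\QQ_2$ still has marginals $\mu_t$, and the martingale-problem/quadratic-variation characterization defining $\Q(\mu)$ is preserved under mixtures (the drift $a\beta^{\QQ_\theta}$ of the mixture is the corresponding mixture of drifts, weighted by the Radon--Nikodym densities of $\QQ_1,\QQ_2$ w.r.t.\ $\QQ_\theta$). Then strict convexity of $\tilde I$ from Lemma \ref{lem folk} gives $\tilde I(\QQ_\theta)<\theta\tilde I(\QQ_1)+(1-\theta)\tilde I(\QQ_2)=\text{value}\eqref{eqprimal extendido}$, contradicting that $\text{value}\eqref{eqprimal extendido}$ is the infimum. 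Hence the optimizer is unique.

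The only genuinely delicate point is verifying that $\Q(\mu)$ is closed under the weak limit and convex in the precise sense needed — that is, that the defining semimartingale constraints (the drift-corrected $M$ being a martingale with the prescribed quadratic variation) pass to limits and mixtures. This is exactly the content packaged into Lemma \ref{lem folk} and the cited \cite[Theorem 8.3]{BaPa20} (inf-compactness already encodes stability of the constraint set along sequences with bounded cost), so in the write-up I would simply invoke Lemma \ref{lem folk} for lower semicontinuity, inf-compactness and strict convexity, and note that the marginal constraints are trivially weakly closed; everything else is the routine direct method.
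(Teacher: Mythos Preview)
Your proof is correct and follows exactly the same approach as the paper: the paper's own proof reads in full ``Immediate from Lemma \ref{lem folk} and the fact that the constraints are closed w.r.t.\ weak convergence,'' and you have simply unpacked this into the direct-method argument (tightness, lower semicontinuity, closedness of the marginal constraints, strict convexity for uniqueness). The only additional content in your write-up is the explicit verification that $\Q(\mu)$ is convex, which the paper leaves implicit in the assertion that $\tilde I$ is strictly convex.
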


\begin{proof}
Immediate from Lemma \ref{lem folk} and the fact that the constraints are closed w.r.t.\ weak convergence.
\end{proof}

We will need further properties of the functional $\tilde{I}$ when we establish the absence of duality gap in Section \ref{No gap}. First we must introduce some terminology from stochastic analysis. We follow \cite{dualrepminsupersol}, in the simpler so-called translation-invariant setting. By a supersolution of a Backward Stochastic Differential Equation (BSDE) with generator $g$ and terminal condition $F(X)$ we mean a couple of processes $(Y,Z)$, the first one c\`adl\`ag adapted and the second predictable and making $\int Z\cdot\dd M$ a $\PP$-supermaringale, such that
\footnote{Strictly speaking, the stochastic integral term is often taken to be of the form $\int_s^tZ_r\dd W_r$ in the literature. Since our $\sigma$ is invertible we can and prefer to write $\int_s^tZ_r\dd M_r$, as $M$ is the most \emph{natural} martingale for us.}
$$\left \{
\begin{array}{rll}
Y_s - \int_s^t g(r,X_r,\sigma'(t,X_t)Z_r)\dd r + \int_s^t Z_r\cdot \dd M_r  & \geq \, Y_t  & \mbox{, for all }0\leq s\leq t\leq T,\\
Y_T & \geq \, F(X) & .

\end{array}
\right .
$$
\noindent Obesrve that $Y_0$ is $X_0$-measurable. A supersolution $(\bar{Y},\bar{Z})$ is said minimal if a.s. $\bar{Y}_t\leq Y_t$ for every $t$ and every supersolution $(Y,Z)$. Let us denote by ${\cal A}(F)$ the set of supersolutions. From our assumptions follows that $g(r,X_r,0)=0$, so if $F$ is essentially bounded we have that $(\|F\|_{\infty},0)\in{\cal A}(F)$. As proved originally in \cite{minsupersol}, and extended in \cite[Theorem 2.1]{dualrepminsupersol}, we may define the minimal supersolution operator by $E^g_t(F)=+\infty$ for all $t$ if ${\cal A}(F)=\emptyset$, and otherwise
$$
E^g_t(F):= \essinf \left \{   Y_t: (Y,Z )\in {\cal A}(F)\right \},
$$
in which case the process $E^g(F)$ is the minimal supersolution for the terminal condition $F$. Again, $E^g_0(F)$ is $X_0$-measurable. When $g$ has at most quadratic growth in its last component then $E^g(F)$ may reduce to the solution of the BSDE with generator $g$. In general, a BSDE may have no solutions (see \cite{superquadraticBSDE}) and this is the reason one works with supersolutions. 

\begin{lemma}
\label{identificacionconjugadas} 
Define 
\begin{equation}
\label{primalobj}
\QQ\ll\PP\mapsto I(\QQ)\,\, := \,\, \EE^{\QQ}\left [ \int_0^T g^*\left ( t,X_t,\sigma'(t,X_t)\beta_t^{\QQ}\right )\dd t \right ] .
\end{equation}
if $\QQ\circ X_0^{-1}=m_0$, and $+\infty$ otherwise.
The minimal supersolution operator (at time zero) is related to $I$ via the following conjugate relationship:
$$\int [E^g_0(F)](x_0) \dd m_0( x_0)\,=\, \sup_{\QQ\ll\PP} \{ \EE^\QQ[F] - I(\QQ)\},\,\, F\in L^{\infty}(\PP).$$
The converse is also true, namely
\begin{align}\label{dual rep I}
I(\QQ)\,=\, \sup_{F\in L^\infty(\PP)}\left\{ \EE^\QQ[F]-\int [E^g_0(F)](x_0) \dd m_0( x_0 )\right\},\,\, \QQ\ll\PP.
\end{align}
\end{lemma}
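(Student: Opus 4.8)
The plan is to establish the identity in two halves, recognizing the statement as a biconjugacy/Fenchel–Moreau type result: $I$ and the map $F \mapsto \int [E^g_0(F)](x_0)\,\dd m_0(x_0)$ are convex conjugates of one another. The first half — computing $\int [E^g_0(F)](x_0)\,\dd m_0(x_0) = \sup_{\QQ \ll \PP}\{\EE^\QQ[F] - I(\QQ)\}$ — is the substantive one, and the second half (the formula for $I$) should then follow by a Fenchel–Moreau argument provided the right side is a well-behaved convex functional.

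First I would prove the inequality $\int [E^g_0(F)](x_0)\,\dd m_0(x_0) \geq \EE^\QQ[F] - I(\QQ)$ for every $\QQ \ll \PP$ with $\QQ \circ X_0^{-1} = m_0$ and $I(\QQ) < \infty$. Fix a supersolution $(Y,Z) \in \mathcal{A}(F)$. Under $\QQ$, by Girsanov the process $M_\cdot - \int_0^\cdot a(t,X_t)\beta_t^\QQ\,\dd t$ is a $\QQ$-martingale, so $\int Z\cdot\dd M = \int Z\cdot\dd(\QQ\text{-martingale}) + \int Z'a\beta^\QQ\,\dd t$. Writing the supersolution inequality from $s=0$ to $t=T$, taking $\QQ$-expectation (the stochastic integral against the $\QQ$-martingale is a $\QQ$-supermartingale since $\int Z\cdot\dd M$ is a $\PP$-supermartingale and one controls the added drift term — this is the delicate integrability point), and using $Y_T \geq F(X)$, one gets $\EE^\QQ[Y_0] \geq \EE^\QQ[F] - \EE^\QQ\big[\int_0^T (g(t,X_t,\sigma'Z_t) - \langle \sigma'Z_t, \sigma'\beta^\QQ_t\rangle)\,\dd t\big] \geq \EE^\QQ[F] - \EE^\QQ\big[\int_0^T g^*(t,X_t,\sigma'\beta^\QQ_t)\,\dd t\big] = \EE^\QQ[F] - I(\QQ)$, where the Fenchel–Young inequality $\langle z, y\rangle - g(t,x,z) \leq g^*(t,x,y)$ was applied pointwise. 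Since $\EE^\QQ[Y_0] = \int Y_0(x_0)\,\dd m_0(x_0)$ (as $Y_0$ is $X_0$-measurable and $\QQ\circ X_0^{-1}=m_0$), taking the infimum over supersolutions gives the bound.

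For the reverse inequality I would argue that the minimal supersolution is itself (essentially) attained by the optimal change of measure, i.e. construct a near-optimal $\QQ$ by taking $\beta^\QQ_t$ to realize equality in Fenchel–Young: $\sigma'\beta^\QQ_t = \nabla g(t,X_t,\sigma' Z_t)$ for the minimal supersolution pair $(E^g(F), Z)$. The point is that along the minimal supersolution, the supersolution inequality becomes an equality in a suitable sense (this is where results on minimal supersolutions from \cite{dualrepminsupersol,minsupersol} — existence, the fact that the minimal supersolution has a specific structure, and stability — are invoked), and plugging this $\beta^\QQ$ back in turns all the inequalities in the previous paragraph into equalities, up to controlled error. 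One must check that this $\QQ$ satisfies $\QQ \ll \PP$, $\QQ\circ X_0^{-1} = m_0$, and $I(\QQ) < \infty$; boundedness of $F$ and Assumption (C) (especially the growth and $\Delta_2$-type conditions 3–5, which give the needed integrability and prevent degeneracy) are what make this work. I expect this direction — rigorously producing the optimizing $\QQ$ from the minimal supersolution and controlling integrability — to be the main obstacle; it is essentially a verification theorem in stochastic control, and the non-uniqueness/supersolution subtleties (rather than genuine BSDE solutions) when $g$ has superquadratic growth are exactly what must be handled with care.

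Finally, for \eqref{dual rep I}: the map $\QQ \mapsto \sup_{F \in L^\infty(\PP)}\{\EE^\QQ[F] - \int[E^g_0(F)](x_0)\,\dd m_0(x_0)\}$ is, by the first part, the convex conjugate of a proper convex lower-semicontinuous functional on $L^\infty(\PP)$ (lower-semicontinuity and convexity of $F \mapsto \int[E^g_0(F)]\,\dd m_0$ follow from the representation as a supremum of affine functionals just established), paired in the duality $(L^\infty, (L^\infty)^*)$ or more conveniently restricting to the $\QQ \ll \PP$ with bounded densities and using $L^1$–$L^\infty$ duality. Since $I$ is itself convex and lower-semicontinuous (it is an integral functional of the density composed with the convex $g^*$, cf. the reasoning behind Lemma \ref{lem folk}), the Fenchel–Moreau theorem gives $I = I^{**}$, which is precisely \eqref{dual rep I}. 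The only care needed is to confirm that the biconjugate is taken in a topology for which $I$ is closed and for which the first identity genuinely exhibits $F \mapsto \int[E^g_0(F)]\,\dd m_0$ as $I^*$; restricting attention to bounded $F$ and to $\QQ$ with $\dd\QQ/\dd\PP \in L^\infty$ first, then removing the restriction by monotone approximation, is the cleanest route.
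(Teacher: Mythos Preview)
Your outline is correct and amounts to reconstructing from scratch the results the paper simply cites. The paper's proof is much shorter: it reduces to the case where $m_0$ is a Dirac (by disintegrating $\QQ$ over its initial condition and using decomposability of $L^\infty$), and then invokes \cite[Theorem~3.4]{dualrepminsupersol} for the first identity and \cite[Theorem~3.10]{dualrepminsupersol} for \eqref{dual rep I}, with a short convexity argument to pass from $\QQ\sim\PP$ to $\QQ\ll\PP$. Your Girsanov/Fenchel--Young computation for one inequality, verification on the minimal supersolution for the reverse, and Fenchel--Moreau biconjugacy for \eqref{dual rep I} are precisely the ingredients behind those cited theorems, so the underlying mathematics is the same. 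Your direct treatment of general $m_0$ via $\EE^\QQ[Y_0]=\int Y_0\,\dd m_0$ sidesteps the paper's disintegration reduction; conversely, the paper avoids the technical point you correctly flag as the main obstacle (producing the near-optimizing $\QQ$ from the minimal supersolution in the superquadratic regime) by outsourcing it to the reference.
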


\begin{proof}
By regular disintegration of $\QQ$ w.r.t.\ its initial condition, and the fact that the space $L^{\infty}$ is decomposable, it is elementary to see that proving  the conjugate duality relations in this lemma can be reduced to the case when $m_0$ is concentrated in a singleton. We now assume this. Then the first statement is \cite[Theorem 3.4]{dualrepminsupersol}, upon observing that what the authors call $q$ is our $\beta$ and that there is no ``discounting factor'' in our case since our $E^g$ is translation-invariant. The proof of \eqref{dual rep I} can be found in  \cite[Theorem 3.10]{dualrepminsupersol}, more precisely in the part of the proof entitled \emph{Second equality} therein (again, there is no discounting factor for us), if we assume that $\QQ\sim\PP$. The case $\QQ\ll\PP$ is obtained by convexity and elementary computations. 
\end{proof}

\begin{lemma}\label{identificacionconjugadas ext}
We have $\tilde{I}(\QQ)\leq I(\QQ)$ with equality if $\QQ\ll\PP$. Accordingly,
\begin{align}\label{eq ineq}
\int [E^g_0(F)](x_0) \dd m_0( x_0)\,\leq\, \sup_{\QQ} \{ \EE^\QQ[F] - \tilde{I}(\QQ)\},
\end{align}
for $F$ Borel bounded. If $F$ is lower semicontinuous and bounded from below, then there is equality in \eqref{eq ineq}.

\end{lemma}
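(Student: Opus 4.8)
The inequality $\tilde I(\QQ)\le I(\QQ)$ with equality when $\QQ\ll\PP$ is essentially by definition: $I$ differs from $\tilde I$ only in that $I$ assigns $+\infty$ to measures not absolutely continuous with respect to $\PP$, while both coincide on the class $\QQ\ll\PP$ with $\QQ\circ X_0^{-1}=m_0$ (recall $\mu_0=m_0$ by (B), so this last condition is automatic for $\QQ\in\Q(\mu)$). So $\tilde I\le I$ pointwise, and the values agree on $\{\QQ\ll\PP\}$. First I would spell this out in one or two lines.

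For \eqref{eq ineq}, I would combine this with the first identity of Lemma 1.5. Indeed, using $\tilde I\le I$,
\[
\sup_{\QQ}\{\EE^\QQ[F]-\tilde I(\QQ)\}\;\ge\;\sup_{\QQ\ll\PP}\{\EE^\QQ[F]-\tilde I(\QQ)\}\;=\;\sup_{\QQ\ll\PP}\{\EE^\QQ[F]-I(\QQ)\}\;=\;\int [E^g_0(F)](x_0)\,\dd m_0(x_0),
\]
the last step being exactly the first conjugate relation in Lemma 1.5. This gives \eqref{eq ineq} for $F$ Borel bounded with no extra work.

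The substantive part is the reverse inequality when $F$ is lower semicontinuous and bounded from below. Here the point is that although $\tilde I$ is ``smaller'' than $I$, the supremum $\sup_\QQ\{\EE^\QQ[F]-\tilde I(\QQ)\}$ is not actually increased by allowing singular $\QQ$, provided $F$ is l.s.c.\ and bounded below. The plan is: given any $\QQ$ (possibly singular) with $\tilde I(\QQ)<\infty$, approximate it in the weak topology by a sequence $\QQ^n\ll\PP$ with $\QQ^n\to\QQ$ and $\limsup_n \tilde I(\QQ^n)=\limsup_n I(\QQ^n)\le \tilde I(\QQ)$; since $F$ is l.s.c.\ and bounded below, $\liminf_n\EE^{\QQ^n}[F]\ge\EE^\QQ[F]$ by the portmanteau theorem, and hence $\EE^\QQ[F]-\tilde I(\QQ)\le\liminf_n\{\EE^{\QQ^n}[F]-I(\QQ^n)\}\le\sup_{\QQ'\ll\PP}\{\EE^{\QQ'}[F]-I(\QQ')\}=\int[E^g_0(F)]\,\dd m_0$. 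Taking the sup over $\QQ$ closes the loop. The only nontrivial ingredient is the existence of such a recovery sequence $\QQ^n\ll\PP$; I expect this to follow from a standard mollification/perturbation argument — e.g.\ mixing $\QQ$ with a small amount of $\PP$, or perturbing the drift $\beta^\QQ$ and conditioning, using the $\Delta_2$-type growth controls in assumption (C) (items 4 and 5, and the $g$-side convexity) to ensure $\tilde I(\QQ^n)\to\tilde I(\QQ)$; tightness/inf-compactness of $\tilde I$ from Lemma 1.4 guarantees the relevant compactness. Constructing this sequence and verifying the energy convergence is the main obstacle; everything else is soft (portmanteau + Lemma 1.5).

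Alternatively — and this may be cleaner — one can avoid approximation entirely by invoking the dual representation \eqref{dual rep I} together with a minimax/Sion-type argument at the level of $\tilde I$, using Lemma 1.4 (strict convexity, l.s.c., inf-compactness of $\tilde I$) to justify exchanging $\inf_\QQ$ and $\sup_F$; then $\sup_\QQ\{\EE^\QQ[F]-\tilde I(\QQ)\}$ is identified as a conjugate of $\tilde I$, and one checks it agrees with $\int[E^g_0(F)]\,\dd m_0$ on l.s.c.\ bounded-below $F$ because the biconjugate of $I$ restricted to such $F$ sees only the l.s.c.\ regularization, which is $\tilde I$. I would present the approximation argument as the primary route since it is the most transparent, and remark that the growth conditions in (C) are used precisely to make $\tilde I(\QQ^n)\to\tilde I(\QQ)$.
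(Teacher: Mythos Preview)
Your treatment of the first two assertions matches the paper: $\tilde I\le I$ with equality on $\{\QQ\ll\PP\}$ follows from the definitions (the paper adds one line of Girsanov to check that $\QQ\ll\PP$ with Girsanov drift $\beta^\QQ$ indeed lands in $\Q$, so that $\tilde I(\QQ)$ is computed with the same $\beta^\QQ$), and inequality \eqref{eq ineq} is deduced from Lemma~\ref{identificacionconjugadas} exactly as you write.

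For the equality case the paper gives no self-contained argument at all; it simply invokes \cite{BaLaTa18}. Your primary route, by contrast, carries a genuine gap. The recovery sequence $\QQ^n\ll\PP$ with $\QQ^n\Rightarrow\QQ$ and $\limsup_n \tilde I(\QQ^n)\le \tilde I(\QQ)$ is not a ``standard mollification'', and neither of your suggested constructions works. Mixing $\QQ$ with a small amount of $\PP$ fails outright: $(1-\epsilon)\QQ+\epsilon\PP\ll\PP$ if and only if $\QQ$ already is. Perturbing the drift $\beta^\QQ$ is equally problematic: $\beta^\QQ$ is only a $\QQ$-predictable functional, and without absolute continuity there is no mechanism to feed it into a Girsanov change of measure starting from $\PP$; the growth hypotheses in {\bf (C)} give only $|y|^p$-coercivity of $g^*$ with $p>1$, not the quadratic growth that would yield a Novikov-type criterion. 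The $\Delta_2$ conditions {\bf (C)}.4--5 control $g^*$ under scalar dilations of its last argument, not under measure-level approximation, so they do not deliver the energy convergence you need either. In short, the step you yourself flag as ``the main obstacle'' is precisely the nontrivial content of the cited reference, and your sketch does not resolve it. Your alternative minimax route is closer in spirit to what is actually done in \cite{BaLaTa18}, but it too requires real work to identify the convex conjugate of $\tilde I$ on l.s.c.\ bounded-below test functions with $\int E^g_0(\cdot)\,\dd m_0$; Lemma~\ref{lem folk} alone does not do this.
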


\begin{proof}
Given $\QQ$, if $\beta,\bar{\beta}$ satisfy the conditions on $\beta^\QQ$ for  \eqref{primalobj ext}, then $a(t,X_t)(\beta-\bar{\beta})(t,X)=0$ holds $\dd\QQ\times\dd t$-a.s.\ and from here $\sigma'(t,X_t)\beta(t,X) = \sigma'(t,X_t)\bar{\beta}(t,X)$ $\dd\QQ\times\dd t$-a.s. Ergo the value of $\tilde{I}$ is well-defined. If $\QQ$ is not abs.\ continuous then $\tilde{I}(\QQ)\leq I(\QQ)$ is trivial. Otherwise, we obtain by Girsanov that  $M_{\cdot}-\int_0^{\cdot}a(t,X_t)\beta^\QQ_t\dd t $ is a $\QQ$-martingale with quadratic variation process $\int_0^\cdot a(t,X_t)\dd t$, where $\dd\QQ/\dd\PP=\E(\int \beta^\QQ\dd M)$. So there is equality in that case. As for \eqref{eq ineq}, it follows from Lemma \ref{identificacionconjugadas}, whereas the equality case is contained in \cite{BaLaTa18}. 
\end{proof}

\section{The dual problem and relevant function spaces}\label{duality}

We start by motivating the relevance of \eqref{eq dual original}.

\begin{lemma}\label{lem weak duality}
Weak duality holds: 
$ \mbox{value}\eqref{eqprimal extendido}\,\, \geq\,\,\mbox{value} \eqref{eq dual original}$.
\end{lemma}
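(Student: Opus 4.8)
The plan is to establish weak duality by a direct computation: take an arbitrary admissible $\QQ \in \Q(\mu)$ and an arbitrary test function $w \in \C$, and show that the primal objective evaluated at $\QQ$ dominates the dual objective evaluated at $w$; then take the infimum over $\QQ$ and the supremum over $w$.

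\textbf{Main steps.} First I would apply It\^o's formula to $w(t,X_t)$ under $\QQ$. Since under $\QQ$ the canonical process has generator corresponding to the coefficients $(b + a\beta^\QQ, a)$, we get
\begin{align*}
\EE^\QQ\!\left[ w(T,X_T) - w(0,X_0) \right] = \EE^\QQ\!\left[ \int_0^T \left( \L_t w(t,X_t) + \langle \nabla w(t,X_t),\, a(t,X_t)\beta^\QQ_t \rangle \right) \dd t \right].
\end{align*}
Because $\supp(w)$ is compact inside $(0,T)\times\RR^q$, the boundary terms $w(T,X_T)$ and $w(0,X_0)$ vanish, so the left-hand side is zero. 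Rewriting $\langle \nabla w, a\beta^\QQ \rangle = \langle \sigma'\nabla w,\, \sigma'\beta^\QQ \rangle$ and using the Fenchel--Young inequality $\langle \sigma'\nabla w,\, \sigma'\beta^\QQ \rangle \le g(t,X_t,\sigma'\nabla w) + g^*(t,X_t,\sigma'\beta^\QQ)$, we obtain
\begin{align*}
0 \ge \EE^\QQ\!\left[ \int_0^T \left( \L_t w(t,X_t) - g(t,X_t,\sigma'(t,X_t)\nabla w(t,X_t)) - g^*(t,X_t,\sigma'(t,X_t)\beta^\QQ_t) \right) \dd t \right].
\end{align*}
Rearranging and using the marginal constraint $\QQ\circ X_t^{-1} = \mu_t$ to rewrite the expectation of the $w$-dependent (but $\QQ$-independent, after disintegration) terms as integrals against $\mu_t(\dd x)\dd t$, this yields
\begin{align*}
\EE^\QQ\!\left[ \int_0^T g^*(t,X_t,\sigma'(t,X_t)\beta^\QQ_t)\dd t \right] \ge \iint \left( \L_t w(t,x) - g(t,x,\sigma'(t,x)\nabla w(t,x)) \right) \mu_t(\dd x)\dd t.
\end{align*}
Taking $\inf$ over $\QQ \in \Q(\mu)$ on the left and $\sup$ over $w\in\C$ on the right gives the claim. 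If $\Q(\mu)$ is empty the inequality is trivial (the primal value is $+\infty$), and if the primal value is $+\infty$ there is also nothing to prove, so one may assume a fixed admissible $\QQ$ with finite objective.

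\textbf{Main obstacle.} The delicate point is the justification of It\^o's formula and the finiteness/integrability needed to carry out the manipulation cleanly: one must ensure the local martingale part $\int_0^\cdot \langle \nabla w(t,X_t), \dd M_t^\QQ\rangle$ is a true martingale (or at least has zero expectation) so that it drops out. Here the compact support of $w$ helps a lot: $\nabla w$ is bounded, $a$ is bounded by assumption \textbf{(A)}, and $w$ is $C^{1,2}$, so the stochastic integral is an $L^2$-martingale and the drift terms are bounded, making all expectations finite. One should also be slightly careful that the integrand $\langle \nabla w, a\beta^\QQ\rangle$ is $\dd\QQ\times\dd t$-integrable before invoking Fenchel--Young; since we may assume $\EE^\QQ[\int_0^T g^*(t,X_t,\sigma'\beta^\QQ)\dd t] < \infty$ and $g(\cdot,\cdot,\sigma'\nabla w)$ is bounded on the compact support of $w$ and zero off it (as $g$ is finite-valued and continuous, with $\nabla w$ compactly supported), the Young inequality bound is integrable, which legitimizes splitting the expectation. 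No conceptual difficulty remains beyond this bookkeeping.
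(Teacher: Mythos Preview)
Your approach is essentially identical to the paper's: use the martingale-problem identity for $w\in\C$ (equivalently It\^o with vanishing boundary terms), apply Fenchel--Young, and then pass to $\mu_t(\dd x)\dd t$ via the marginal constraint. There is, however, a sign slip in your Young step. From $\langle \sigma'\nabla w,\sigma'\beta^\QQ\rangle \le g(\sigma'\nabla w)+g^*(\sigma'\beta^\QQ)$ together with $0=\EE^\QQ\!\left[\int_0^T(\L_t w+\langle\sigma'\nabla w,\sigma'\beta^\QQ\rangle)\,\dd t\right]$ one gets $\EE^\QQ\!\left[\int_0^T(\L_t w + g + g^*)\,\dd t\right]\ge 0$, i.e.\ $\EE^\QQ\!\left[\int g^*\,\dd t\right]\ge \iint(-\L_t w - g)\,\mu_t\dd t$, not your displayed inequality with $+\L_t w$. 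The fix, which the paper invokes explicitly, is the evenness of $g$ in its last argument (Assumption~\textbf{(C)}): apply Young to the pair $(-\sigma'\nabla w,\,\sigma'\beta^\QQ)$ so that $-\langle\sigma'\nabla w,\sigma'\beta^\QQ\rangle\le g(\sigma'\nabla w)+g^*(\sigma'\beta^\QQ)$, and then your conclusion $\EE^\QQ\!\left[\int g^*\,\dd t\right]\ge \iint(\L_t w-g)\,\mu_t\dd t$ follows as written. Equivalently, just replace $w$ by $-w\in\C$ at the end. Apart from this, the argument and your integrability bookkeeping are correct.
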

\begin{proof}By definition of convex conjugates, and since $g$ is even in the last argument, we have for any admissible $\QQ,w$ that
\begin{align*}
& \EE^{\QQ}\left [ \int_0^T g^*\left ( t,X_t,\sigma'(t,X_t)\beta^{\QQ}_t\right )\dd t \right ]\\ \geq& \EE^{\QQ}\left [ \int_0^T \{-(\beta_t^\QQ)' a(t,X_t)\nabla w(t,X_t) - g(t, X_t,\sigma'(t,X_t)\nabla w(t,X_t))\}\dd t \right ] \\
=& \EE^{\QQ}\left [ \int_0^T \{\L_t w(t,X_t) - g(t, X_t,\sigma'(t,X_t)\nabla w(t,X_t))\}\dd t \right ]\\
=& \iint [\L_t w(t,x)- g(t,x,\sigma'(t,x)\nabla w(t,x))]\mu_t(\dd x) \dd t.
\end{align*}
Indeed, since $\QQ$ is a solution to the martingale problem $(b+a\beta^\QQ,\, a)$, and as $w\in \C$ implies that $\nabla w$ is bounded, we have
%
$$\EE^\QQ\left[\int_0^T \left \{ (\beta^\QQ_t)'a(t,X_t) \nabla w(t,X_t) + \L_t w(t,X_t)\right\}\dd t\right]=0.$$
\end{proof}

Thus we are entitled to call \eqref{eq dual original} the \emph{dual} problem. We shall soon extend this problem, but first we need to introduce a few more elements. Let us define a semi-norm on functions $\psi:[0,T]\times\RR^q\to\RR^q$ as follows
$$\|\psi\|_g := \inf\left \{ \ell\geq 0:\iint g(t,x,\sigma'(t,x)\psi(t,x)/\ell)  \mu_t(\dd x)\dd t\leq 1\right\},$$
as well as the following Orlicz-like space:
\begin{align*}
L^g&:= \left\{ \psi:[0,T]\times\RR^q\to\RR^q\,\,\in L^1(\mu_t(\dd x)\dd t) : \|\psi\|_g<\infty\right\}.
\end{align*}
Under Assumption {\bf (C)} we actually have (see proof of Lemma \ref{lem Kozek} below)
$$L^g =\left\{ \psi:[0,T]\times\RR^q\to\RR^q : \forall \alpha >0,\,\, \iint g(t,x,\alpha\sigma'(t,x)\psi(t,x))  \mu_t(\dd x)\dd t<\infty \right\}. $$
We cannot call $L^g$ an actual Orlicz space because of the presence of the time-space parameters $(t,x)$ in $g$. It is however an Orlicz-Musielak space (see \cite{KozekBan,Kozekint}).
{
Similarly, we define
\begin{align*}
L^{g^*} &:= \left\{ \psi:[0,T]\times\RR^q\to\RR^q\,\,\in L^1(\mu_t(\dd x)\dd t) : \|\psi\|_{g^*}<\infty\right\}.\\
\|\psi\|_{g^*} &:= \inf\left \{ \ell\geq 0:\iint g^*(t,x,\sigma'(t,x)\psi(t,x)/\ell)  \mu_t(\dd x)\dd t\leq 1\right\} .
\end{align*} 

\begin{lemma}\label{lem Kozek}
Identifying $\mu_t(\dd x)\dd t$-a.s.\ equal functions, the semi-norm $\|\cdot\|_g$ (respect.\ $\|\cdot \|_{g^*}$) is an actual norm on $L^g$ (respect.\ $L^{g^*}$). The norm dual of $L^g{^*}$ is isometrically isomorphic to $L^g$, and both are reflexive Banach spaces. The duality pairing is
$$(\beta,\psi)\ni L^{g^*}\times L^g\mapsto \iint \beta(t,x)'a(t,x)\psi(t,x) \mu_t(\dd x)\dd t.$$
\end{lemma}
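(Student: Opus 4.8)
The statement is a package of standard facts about Orlicz–Musielak (generalized Orlicz) spaces, so the plan is to reduce everything to the classical theory by verifying that the integrand $G(t,x,r):=g(t,x,\sigma'(t,x)r)$ (and similarly $G^*$ from $g^*$) is a bona fide Musielak--Orlicz function in the $r$-variable, i.e.\ that for $\mu_t(\dd x)\dd t$-a.e.\ $(t,x)$ it is convex, even, vanishes only at $r=0$, is finite-valued, and has superlinear growth, while being jointly measurable in $(t,x,r)$. Joint measurability and finiteness of $g,g^*$ were already recorded after Assumption \textbf{(C)}; convexity, evenness, and the equivalence $G(t,x,r)=0\iff r=0$ follow from \textbf{(C)} together with the invertibility of $\sigma$ in \textbf{(A)}; the growth of $g^*$ from \textbf{(C)}(3) transfers to growth of $g$ by Legendre duality, again using boundedness and invertibility of $\sigma$ to compare $\|\sigma'(t,x)r\|$ with $\|r\|$ uniformly. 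With $G,G^*$ identified as a complementary pair of Musielak--Orlicz functions, the first assertion (that $\|\cdot\|_g,\|\cdot\|_{g^*}$ are genuine norms after quotienting by a.e.\ equality) is immediate: the gauge of a convex set that contains a ball and is contained in a ball is a seminorm, definiteness comes from $G(t,x,r)=0\iff r=0$, and the $L^1$-integrability built into the definition of $L^g,L^{g^*}$ makes the quotient well defined.

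Next I would address the $\Delta_2$-type conditions, which is where the real work sits. The duality $(L^{g^*})^*\cong L^g$, and reflexivity of both, are classical once one knows that \emph{both} $G$ and $G^*$ satisfy a $\Delta_2$ condition (suitably localized, i.e.\ a Musielak--Orlicz $\Delta_2$ with an integrable ``defect'' function); see \cite{KozekBan,Kozekint}. Assumption \textbf{(C)}(4) is exactly the $\Delta_2$ condition for $g^*$, hence for $G^*$ after the linear change of variables $r\mapsto\sigma'(t,x)r$ (here one uses that $\sigma$ is bounded, so $2\sigma'r$ is controlled, and the defect $h$ is integrable by hypothesis). The subtle point is the $\Delta_2$ condition for $g$ itself: this is \emph{not} assumed directly, but Assumption \textbf{(C)}(5), which says $g^*(t,x,y)\le \tfrac{1}{2\ell}g^*(t,x,\ell y)+H(t,x)$ for some $\ell>1$, is precisely the Legendre-dual reformulation of a $\Delta_2$ condition on $g$. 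I would make this explicit: taking convex conjugates in the last variable turns the inequality on $g^*$ into an inequality of the form $g(t,x,\ell y)\le \ell^{?}\,g(t,x,y)+\tilde H(t,x)$ with an integrable defect, which is the $\nabla_2$/$\Delta_2$ dichotomy in disguise. This is the step I expect to be the main obstacle, because one has to do the conjugation carefully, keeping track of the constant $\ell$ and of the fact that the defect functions are only integrable rather than bounded; the inequality must be massaged into the precise inhomogeneous $\Delta_2$ form used in \cite{KozekBan,Kozekint} for the Musielak--Orlicz setting.

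Once both $G$ and $G^*$ are known to satisfy these growth/$\Delta_2$ conditions with integrable defects, I would invoke the Kozek duality and reflexivity results for Orlicz--Musielak spaces \cite{KozekBan,Kozekint}: under $\Delta_2$ on $G$ one has that $L^{G^*}$ is the Köthe dual of $L^G$ and moreover the full topological dual, with the natural bilinear pairing $\langle\psi,\beta\rangle=\iint G^*$-$G$ coupling; under $\Delta_2$ on both $G$ and $G^*$ both spaces are reflexive. Finally I would rewrite the abstract pairing $\iint \psi(t,x)\cdot r\text{-variable}$ back through the substitution $r\mapsto\sigma'(t,x)\psi$: pairing $\beta\in L^{g^*}$ against $\psi\in L^g$ becomes $\iint g^*$-$g$-duality evaluated at $\sigma'\psi$ and $\sigma'\beta$, and since the coupling of $y=\sigma'\beta$ with $z=\sigma'\psi$ is $\langle\sigma'\beta,\sigma'\psi\rangle=\beta'\sigma\sigma'\psi=\beta' a\psi$, one recovers exactly the stated pairing $\iint \beta(t,x)'a(t,x)\psi(t,x)\,\mu_t(\dd x)\dd t$. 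I would close by remarking that the isometry is literal because the change of variables is linear and its effect on the Luxemburg norms is absorbed into the definition of $\|\cdot\|_g,\|\cdot\|_{g^*}$ (which already incorporate the factor $\sigma'(t,x)$), so no extra constants appear.
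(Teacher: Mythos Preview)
Your overall strategy is right and matches the paper's: reduce everything to Kozek's Orlicz--Musielak theory, verify the structural hypotheses, and use \textbf{(C)}.4 and \textbf{(C)}.5 to obtain the $\Delta_2$ conditions on both sides that yield duality and reflexivity. In particular, you correctly identify that \textbf{(C)}.5 on $g^*$ is what produces the $\Delta_2$ condition on $g$; the paper states this as ``Assumption \textbf{(C)}.5 on $g^*$ implies that \textbf{(C)}.4 holds but on $g$ instead'' and then invokes \cite{KozekBan,Kozekint} exactly as you propose.

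There is, however, a genuine bookkeeping error in your setup. You take $G(t,x,r)=g(t,x,\sigma'(t,x)r)$ and ``similarly $G^*$ from $g^*$'', i.e.\ $G^*(t,x,r)=g^*(t,x,\sigma'(t,x)r)$, and then call $G,G^*$ a complementary pair. They are not: the Legendre conjugate of $r\mapsto g(t,x,\sigma'(t,x)r)$ is $s\mapsto g^*(t,x,\sigma(t,x)^{-1}s)$, not $g^*(t,x,\sigma'(t,x)s)$. This matters because the Orlicz duality in \cite{KozekBan,Kozekint} pairs a Young function with its \emph{actual} conjugate and gives the plain bilinear form $\iint \phi'\psi$. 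Your final pairing computation silently switches to a different reduction (mapping $\psi\mapsto\sigma'\psi$ and $\beta\mapsto\sigma'\beta$ into the ``plain'' Orlicz spaces for $g(t,x,\cdot)$ and $g^*(t,x,\cdot)$, which \emph{are} conjugate), and that is why you still land on $\beta' a\psi$; but this is not the same reduction you announced, and the argument as written does not cohere.

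The paper resolves this cleanly by working with the genuine conjugate pair: it sets $h(t,x,\cdot):=g^*(t,x,\sigma(t,x)^{-1}\cdot)$, applies Kozek's results to $(g(t,x,\sigma'\cdot),h)$ to get that $(L^h)^*\cong L^g$ and, via \textbf{(C)}.5 $\Rightarrow$ $\Delta_2$ on $g$, that $(L^g)^*\cong L^h$, hence reflexivity. It then observes that $\phi\mapsto a\phi$ is an isometric isomorphism $L^{g^*}\to L^h$ (because $\sigma^{-1}a=\sigma'$), which transports the standard pairing $\iint \phi'\psi$ on $L^h\times L^g$ to $\iint \beta' a\psi$ on $L^{g^*}\times L^g$. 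Either fix your argument by making the ``map to plain $g,g^*$ via $r\mapsto\sigma' r$'' reduction explicit from the start, or follow the paper and introduce $h$; but do not assert that $g(t,x,\sigma'\cdot)$ and $g^*(t,x,\sigma'\cdot)$ are Young-conjugate.
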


\begin{proof}
Observe that the convex conjugate of $g(t,x,\sigma'(t,x)\cdot)$ is $$h(t,x,\cdot):=g^*(t,x,\sigma(t,x)^{-1}\cdot).$$
Let us call $L^h$ the Orlicz-like space 
\begin{align*}
L^{h} &:= \left\{ \psi:[0,T]\times\RR^q\to\RR^q\,\,\in L^1(\mu_t(\dd x)\dd t) : \|\psi\|_{h}<\infty\right\}.\\
\|\psi\|_{h} &:= \textstyle\inf\left \{ \ell\geq 0:\iint {h}(t,x,\psi(t,x)/\ell)  \mu_t(\dd x)\dd t\leq 1\right\} .
\end{align*} 
Notice that \cite[Conditions A and B, p. 109-110]{KozekBan} are fulfilled. Indeed taking $F:=L^{h}$ in the author's notation, the first condition is a consequence of $F$ containing functions taking two values, whereas the second condition follows from Assumption ${\bf (C)}.2$. Also \cite[Definition 2.1.1, 2.1.2 and 2.1.3]{KozekBan} hold for $\Phi:=h$, thanks to Assumption ${\bf (C)}$. By \cite[Theorem 2.4]{KozekBan} $\|\cdot\|_{h}$ is a norm and $L^{h}$ is Banach. By Assumption ${\bf (C)}.4$ and \cite[Corollary 1.7.4]{Kozekint} we have that the norm dual of $L^h$ is isometrically isomorphic to $L^g$. In particular $\|\cdot\|_g$ is a norm and $L^g$ is Banach. Observe that Assumption ${\bf (C)}.5$ on $g^*$ implies that ${\bf (C)}.4$ holds but on $g$. Thus the equivalent expression for $L^g$ holds, and further applying \cite[Proposition 4.5]{KozekBan} and again \cite[Corollary 1.7.4]{Kozekint} we get that the norm dual of $L^g$ is isometrically isomorphic to $L^{h}$. Putting things together, this shows the reflexivity of both spaces. Now, the mapping
$$L^{g^*}\ni\phi \mapsto a\phi \in L^h,$$
is clearly an isometric isomorphism. It follows that we can identify $L^{g^*}$ and $L^h$, so the former is reflexive Banach and with dual isometrically isomorphic to $L^g$. Since the duality product between $L^g$ and $L^h$ is given by
$$\textstyle(\beta,\psi)\ni L^{h}\times L^g\mapsto \iint  \beta(t,x)'\psi(t,x) \mu_t(\dd x) \dd t,$$
we obtain the desired duality product between $L^{g^*}$ and $L^g$.
\end{proof}
}
\medskip 

Let us introduce the ``space of gradient fields'' 
\begin{align}\label{eqLnabla}
L^g_\nabla&:=  \overline{\{ \nabla w: w\in \C  \}}^{L^g}.
\end{align}

\begin{lemma}\label{lem extension functional}
Assume that  value$\eqref{eq dual original}<\infty$. Then there is a unique continuous linear functional
$$L: L^g_\nabla \to \RR,$$
for which $$\forall w\in\C:\,\, L (\nabla w)=\iint \L_t w(t,x)\mu_t(\dd x) \dd t.$$
%
\end{lemma}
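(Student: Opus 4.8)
The plan is to exhibit $L$ first on the dense subspace $\{\nabla w : w\in\C\}$ by the prescribed formula, show it is well-defined and Lipschitz there, and then extend by density using that $L^g_\nabla$ is a closed subspace of the Banach space $L^g$ (Lemma \ref{lem Kozek}). Uniqueness is then automatic, since any two continuous functionals agreeing on a dense subset of $L^g_\nabla$ coincide. So the whole content of the lemma is the \emph{bound}
$$\Big|\iint \L_t w(t,x)\,\mu_t(\dd x)\,\dd t\Big| \;\le\; C\,\|\nabla w\|_g \qquad\text{for all } w\in\C,$$
for some finite constant $C$ depending only on the data (and in particular on value$\eqref{eq dual original}<\infty$). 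Note also one must first check the map $\nabla w\mapsto \iint \L_t w\,\mu_t\dd t$ is well-defined on equivalence classes: if $\nabla w_1 = \nabla w_2$ $\mu_t(\dd x)\dd t$-a.e., one needs $\iint \L_t w_1\,\mu_t\dd t = \iint\L_t w_2\,\mu_t\dd t$; writing $w=w_1-w_2$, since $\L_t w = \p_t w + b'\nabla w + \tfrac12\sum a^{ij}\p^2_{ij}w$ and the weak-in-time continuity of $\mu$ plus $w\in\C$ compactly supported in time lets one integrate the $\p_t w$ term by parts against $\mu$, the second-order term is the delicate one — but this well-definedness is in fact a byproduct of the bound above, since the bound forces the functional to vanish whenever $\|\nabla w\|_g=0$.

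For the bound itself, the natural route is to use weak duality in the form of Lemma \ref{lem weak duality} together with the homogenization/scaling built into Assumption {\bf (C)}. Fix $w\in\C$ and $\ell>0$; applying the definition of the dual problem \eqref{eq dual original} to the test function $w/\ell$ (legitimate since $\C$ is a vector space) gives
$$\frac1\ell \iint \L_t w\,\mu_t\dd t \;\le\; \text{value}\eqref{eq dual original} \;+\; \iint g\big(t,x,\tfrac1\ell\sigma'(t,x)\nabla w(t,x)\big)\,\mu_t(\dd x)\,\dd t.$$
Now choose $\ell = \|\nabla w\|_g$ (assume first $\|\nabla w\|_g>0$; the zero case follows by letting $\ell\to 0$ or is handled separately). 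By the definition of the Luxemburg-type norm $\|\cdot\|_g$ and the $\Delta_2$-type property {\bf (C)}.4 on $g$ (established inside the proof of Lemma \ref{lem Kozek}), the last integral is bounded by a constant, say $1$ or $1+\iint h\,\mu_t\dd t$. This yields $\iint\L_t w\,\mu_t\dd t \le \big(\text{value}\eqref{eq dual original}+ \text{const}\big)\|\nabla w\|_g$; replacing $w$ by $-w$ gives the matching lower bound, hence the two-sided estimate with $C := \text{value}\eqref{eq dual original}+\text{const} < \infty$.

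With the bound in hand, $L_0(\nabla w):=\iint\L_t w\,\mu_t\dd t$ is a well-defined $C$-Lipschitz linear map from $\{\nabla w:w\in\C\}\subset L^g$ to $\RR$; by the BLT theorem it extends uniquely to a continuous linear functional $L$ on the closure $L^g_\nabla$, with $\|L\|\le C$, and $L(\nabla w)=\iint\L_t w\,\mu_t\dd t$ for all $w\in\C$ by construction. Uniqueness of such an $L$ is immediate from density of $\{\nabla w:w\in\C\}$ in $L^g_\nabla$. The main obstacle is really just the clean derivation of the Lipschitz bound: one has to be a little careful that the $g$-integral term is controlled uniformly (this is exactly what {\bf (C)}.4 for $g$ buys us, as noted in Lemma \ref{lem Kozek}) and that the scaling argument via $w/\ell\in\C$ is applied at the optimal $\ell=\|\nabla w\|_g$; everything else is soft functional analysis.
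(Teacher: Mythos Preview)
Your proposal is correct and follows essentially the same route as the paper: bound $\big|\iint \L_t w\,\mu_t\dd t\big|$ by $\iint g(t,x,\sigma'\nabla w)\,\mu_t\dd t + \text{value}\eqref{eq dual original}$ using the definition of the dual problem (and evenness of $g$ for the lower bound), then rescale $w\leadsto w/\ell$ and pick $\ell$ via the Luxemburg norm to obtain $\big|L(\nabla w)\big|\le (1+\text{value}\eqref{eq dual original})\,\|\nabla w\|_g$, and finally extend by density. One minor remark: you do not actually need the $\Delta_2$-type property {\bf (C)}.4 on $g$ here --- by the very definition of $\|\cdot\|_g$ as an infimum, any $\ell>\|\nabla w\|_g$ already gives $\iint g(t,x,\sigma'\nabla w/\ell)\,\mu_t\dd t\le 1$, and one then lets $\ell\downarrow\|\nabla w\|_g$.
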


\begin{proof}
From value$\eqref{eq dual original}<\infty$ we easily get
$$  
\left |L(\nabla w)\right | \leq \iint  g(t,x,\sigma'(t,x)\nabla w(t,x))\mu_t(\dd x) \dd t +\mbox{value}\eqref{eq dual original},
$$
for all $w\in\C$. Replacing $w$ by $w/\ell$, and choosing $\ell$ appropriately, we also get
\begin{equation} \label{eq mod continuity}
\left |L(\nabla w)\right | \leq \{1+ \mbox{value}\eqref{eq dual original}\} \,\,\|\nabla  w\|_g.
\end{equation}
By linearity and density of gradients in $L^g_\nabla$, we conclude.
\end{proof}

We denote by $(L_\nabla^g)^*$ the norm dual of $L_\nabla^g$ equipped with the $\|\cdot\|_{g}$-topology, i.e.
 $$(L_\nabla^g)^*:=\left\{ \ell:L_\nabla^g\to\RR\mbox{ linear and s.t. } \|\ell\|_{(L_\nabla^g)^*}:=\sup_{w\in\C, \|\nabla w\|_g\leq 1}\,\,\ell(\nabla w) <\infty\right \}.$$
 %

 \begin{lemma}\label{lem representation}
 $(L_\nabla^g)^*$ can be identified with (i.e.\ is isometrically isomorphic to) the quotient of $L^{g^*}$ by the relation
 \begin{equation}\label{identifica}
 \beta \,\R\, \bar{\beta}\,\, \iff \,\, \forall w\in\C:\, \iint [\beta(t,x)-\bar{\beta}(t,x)]'a(t,x)\nabla w(t,x)\mu_t(\dd x) \dd t =0,
\end{equation}  
when $L^{g^*}$ is given the ``operator norm'' as the dual of $L^g$, and the quotient space the derived norm topology.

In particular, if value$\eqref{eq dual original}<\infty$, then there is a unique equivalence class $[\beta]_\R$ such that
\begin{equation}
\label{rep of L}
L\psi\,\,=\,\,\iint \beta(t,x)'a(t,x)\psi(t,x)\mu_t(\dd x)\dd t,\,\,\,\, \mbox{ for all }\psi \in L_\nabla^{g}.
\end{equation}

 \end{lemma}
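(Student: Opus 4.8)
The plan is to identify $(L^g_\nabla)^*$ via the standard duality between a closed subspace and the corresponding quotient of the dual. Recall from Lemma \ref{lem Kozek} that $L^g$ is a reflexive Banach space with norm dual isometrically isomorphic to $L^{g^*}$, under the pairing $(\beta,\psi)\mapsto \iint \beta'a\psi\,\mu_t(\dd x)\dd t$. Since $L^g_\nabla$ is by definition \eqref{eqLnabla} a closed subspace of $L^g$, the Hahn--Banach theorem gives a canonical isometric isomorphism between $(L^g_\nabla)^*$ and the quotient $(L^g)^* / (L^g_\nabla)^\perp$, where $(L^g_\nabla)^\perp$ is the annihilator of $L^g_\nabla$ inside $(L^g)^*$. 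Transporting this through the isomorphism $(L^g)^*\cong L^{g^*}$, the annihilator $(L^g_\nabla)^\perp$ becomes exactly the set of $\beta\in L^{g^*}$ satisfying $\iint \beta'a\,\nabla w\,\mu_t(\dd x)\dd t=0$ for all $w\in\C$ (it suffices to test against $\nabla w$, $w\in\C$, by density and continuity of the pairing). Hence $(L^g_\nabla)^\perp$ is precisely the equivalence class of $0$ under the relation \eqref{identifica}, and quotienting $L^{g^*}$ by this closed subspace is the same as quotienting by the relation $\R$. This yields the claimed isometric isomorphism, with the quotient norm on $L^{g^*}/\R$ matching the operator norm on $(L^g_\nabla)^*$.

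Next I would spell out the maps concretely: to $\ell\in(L^g_\nabla)^*$ one associates (by Hahn--Banach) an extension $\tilde\ell\in(L^g)^*$, which corresponds to some $\beta\in L^{g^*}$ with $\ell(\psi)=\iint\beta'a\psi\,\mu_t(\dd x)\dd t$ for $\psi\in L^g_\nabla$; different Hahn--Banach extensions differ by an element annihilating $L^g_\nabla$, i.e.\ the class $[\beta]_\R$ is well defined and independent of the extension. Conversely any $[\beta]_\R$ defines $\psi\mapsto\iint\beta'a\psi\,\mu_t(\dd x)\dd t$ on $L^g_\nabla$, continuous with norm at most $\|\beta\|_{g^*}$. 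The isometry statement — that the operator norm of this functional on $L^g_\nabla$ equals $\inf_{\bar\beta\,\R\,\beta}\|\bar\beta\|_{g^*}$ — is exactly the content of the Hahn--Banach/quotient-norm identity $\|\tilde\ell\|_{(L^g)^*}=\|\ell\|_{(L^g_\nabla)^*}$ achieved for the minimal-norm extension, combined with the fact that minimal-norm extensions correspond to minimal-norm representatives in $L^{g^*}$.

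For the "in particular" clause: assume value$\eqref{eq dual original}<\infty$. By Lemma \ref{lem extension functional}, $L$ is a well-defined continuous linear functional on $L^g_\nabla$, i.e.\ $L\in(L^g_\nabla)^*$. Applying the identification just established, there is an equivalence class $[\beta]_\R$, unique as an element of the quotient $L^{g^*}/\R$, such that $L\psi=\iint\beta'a\psi\,\mu_t(\dd x)\dd t$ for all $\psi\in L^g_\nabla$, which is \eqref{rep of L}. Uniqueness of the class is immediate from the definition of $\R$: if $\beta,\bar\beta$ both represent $L$, then $\iint(\beta-\bar\beta)'a\psi\,\mu_t(\dd x)\dd t=0$ for all $\psi\in L^g_\nabla$, in particular for all $\psi=\nabla w$ with $w\in\C$, so $\beta\,\R\,\bar\beta$.

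The only genuinely delicate point is verifying that the annihilator of $L^g_\nabla$, viewed inside $L^{g^*}$ through the Kozek duality of Lemma \ref{lem Kozek}, is described by testing against $\nabla w$ with $w\in\C$ only (rather than against all of $L^g_\nabla$); this is a routine density argument — $\{\nabla w:w\in\C\}$ is dense in $L^g_\nabla$ by construction and the pairing is jointly continuous in the $\|\cdot\|_g$ / $\|\cdot\|_{g^*}$ topologies — so the relation \eqref{identifica} defined via $\C$ coincides with "$\beta-\bar\beta$ annihilates $L^g_\nabla$". Everything else is the textbook duality of subspaces and quotients for reflexive Banach spaces. I expect the write-up to be short; the main care is simply to keep the three identifications ($L^g{}^*\cong L^{g^*}$, subspace/quotient duality, and $a\phi$ rescaling) straight.
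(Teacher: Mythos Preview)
Your proof is correct and follows essentially the same route as the paper: both arguments identify $(L^g_\nabla)^*$ with $L^{g^*}$ modulo the annihilator of $L^g_\nabla$ via Hahn--Banach extension from the closed subspace $L^g_\nabla\subset L^g$, using the duality $L^g{}^*\cong L^{g^*}$ from Lemma~\ref{lem Kozek}, and then invoke Lemma~\ref{lem extension functional} for the particular case of $L$. Your write-up is somewhat more explicit about the abstract subspace/quotient duality, but the substance is the same.
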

 
 \begin{proof}
 The subspace $$\textstyle M:=\{\beta \in L^{g^*}\mbox{ s.t. } \forall w\in\C:\, \iint \beta(t,x)'a(t,x)\nabla w(t,x)\mu_t(\dd x) \dd t =0\},$$
 is clearly closed. Notice that $\beta \,\R\, \bar{\beta} \iff \beta-\bar{\beta}\in M$. By classical results, the quotient space $L^{g^*}/\R$ is Banach with the norm $\|[\beta]_\R\|=\inf \{\|\beta + m\|_{g^*}:m\in M\}$. On the one hand, each equivalence class $[\beta]_\R$ defines an element of $(L_\nabla^g)^*$. On the other hand, if $\ell \in (L_\nabla^g)^*$, by Hahn-Banach theorem, $\ell$ can be extended by an $\tilde{\ell}\in (L^g)^*=L^{g^*}$ with $ \|\ell\|_{(L_\nabla^g)^*}=\|\tilde{\ell}\|_{g^*}$. By definition the function $\ell \mapsto [\tilde{\ell}]_\R$ is well-defined, surjective and linear. This function is also an isometry. Indeed, we have already obtained $\|[\tilde{\ell}]_\R\|\leq \|\ell\|_{(L_\nabla^g)^*}$ by the Hahn-Banach argument, whereas the converse inequality is trivial for the operator norm. The last statement is a consequence of the identification of $(L_\nabla^g)^*$ and Lemma \ref{lem extension functional}.
 \end{proof}

\medskip


Owing to the previous lemmas, we can finally say that the expression of the \emph{extended dual problem} \eqref{eq dual extendido}, given in Section \ref{main results}, is now rigorously defined. We have 

\begin{lemma}\label{lem dual ext}
The functional 
$$ \psi\in  L^g_\nabla \mapsto G(\psi):=\iint g(t,x,\sigma'(t,x)\psi(t,x))\mu_t(\dd x) \dd t $$
is convex and norm-continuous. As a consequence, the values of \eqref{eq dual original} and \eqref{eq dual extendido} coincide.
\end{lemma}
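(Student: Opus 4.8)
The plan is to establish the two asserted properties of $G$ — convexity and $\|\cdot\|_g$-continuity — and then read off the equality of the two dual values from the density of $\{\nabla w:w\in\C\}$ in $L^g_\nabla$. Convexity is immediate: for each $(t,x)$ the map $y\mapsto g(t,x,y)$ is convex (as noted right after Assumption {\bf (C)}) and $\psi\mapsto\sigma'(t,x)\psi(t,x)$ is linear, so $\psi\mapsto g(t,x,\sigma'(t,x)\psi(t,x))$ is convex, and integration against the positive measure $\mu_t(\dd x)\dd t$ preserves convexity. Moreover $G\geq 0$, and by the equivalent description of $L^g$ recorded just before Lemma \ref{lem Kozek} (taken with $\alpha=1$) we have $G(\psi)<\infty$ for every $\psi\in L^g$; hence $G$ is a finite-valued convex functional on $L^g$.

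For continuity the point is that $G$ is bounded above on a neighbourhood of the origin of $(L^g,\|\cdot\|_g)$. Since $g(t,x,\cdot)$ is convex and vanishes at $0$, one has $g(t,x,\theta y)\leq\theta g(t,x,y)$ for $\theta\in[0,1]$, so $\ell\mapsto G(\psi/\ell)$ is non-increasing on $(0,\infty)$; consequently, if $\|\psi\|_g<1$ then $\ell=1$ is admissible in the infimum defining $\|\psi\|_g$ and $G(\psi)\leq 1$. Thus $G$ is bounded above by $1$ on the open unit ball. A finite-valued convex functional on a normed space that is bounded above on a neighbourhood of a point is automatically continuous (indeed locally Lipschitz) everywhere; applying this at $0$ shows $G$ is $\|\cdot\|_g$-continuous on $L^g$, hence on the closed subspace $L^g_\nabla$.

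For the last assertion I invoke Lemma \ref{lem extension functional}. We may assume $\mbox{value}\eqref{eq dual original}<\infty$, since otherwise both problems are $+\infty$ (with $\eqref{eq dual extendido}$ understood as such, as it relies on the functional $L$ produced by that lemma) and there is nothing to prove. Then $L:L^g_\nabla\to\RR$ is a well-defined continuous linear functional with $L(\nabla w)=\iint\L_t w(t,x)\,\mu_t(\dd x)\dd t$ for every $w\in\C$, so the objective of \eqref{eq dual original} at $w$ is exactly $L(\nabla w)-G(\nabla w)$; that is, \eqref{eq dual original} is the supremum of $\psi\mapsto L(\psi)-G(\psi)$ over $\{\nabla w:w\in\C\}$, while \eqref{eq dual extendido} is the supremum of the same functional over all of $L^g_\nabla$. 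As $L-G$ is norm-continuous on $L^g_\nabla$ and $\{\nabla w:w\in\C\}$ is dense therein by \eqref{eqLnabla}, the two suprema coincide.

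The argument is essentially routine; the only point requiring a little care is the extraction of the unit-ball bound $G\leq 1$ from the Luxemburg-type norm $\|\cdot\|_g$, which is the standard modular-versus-norm comparison in Orlicz(–Musielak) spaces, here made transparent by the monotonicity of $\ell\mapsto G(\psi/\ell)$ noted above.
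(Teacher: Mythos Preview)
Your proof is correct and follows essentially the same route as the paper's: both establish continuity of $G$ by showing it is convex, finite, and locally bounded (you bound $G$ by $1$ on the open unit ball via the Luxemburg-norm inequality and then invoke the standard ``convex $+$ bounded near a point $\Rightarrow$ continuous everywhere'' fact; the paper carries out exactly this argument at an arbitrary center $\psi$, using the same bound $G(\phi)\le 1$ when $\|\phi\|_g\le 1$). The deduction of the equality of \eqref{eq dual original} and \eqref{eq dual extendido} is likewise identical, combining the continuity of $L$ from Lemma~\ref{lem extension functional} with the continuity of $G$ and the density of gradients in $L^g_\nabla$.
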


\begin{proof}
 Clearly $G$ is convex and finite, so we only need to show its local boundedness. Let $\psi$ given and take $\phi$ s.t.\ $\| \psi-\phi \|_g\leq 1/2$. By convexity, we find
$$G(\phi)\leq \frac{1}{2}G(2\psi)+\frac{1}{2}G(2[\phi-\psi])\leq \frac{1}{2}G(2\psi) +1/2,$$
since by assumption $G(0)=0$ so by convexity again $$G([\phi-\psi]/(1/2))\leq 1\,\,\mbox{ if }\,\,\|\psi-\phi \|_g\leq 1/2.$$
The second statement follows from the first one and the continuity in Lemma \ref{lem extension functional}.
\end{proof}

\begin{lemma}\label{lem integrabilidad}
For any $\psi\in L^g_\nabla$ we have that $(t,x)\mapsto \nabla g(t,x,\sigma'(t,x)\psi(t,x))\,\,\in L^{g^*}$.
\end{lemma}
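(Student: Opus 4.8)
The plan is to use the Fenchel--Young (in)equality together with the duality structure already established. Recall that $g^*(t,x,\cdot)$ is the convex conjugate of $g(t,x,\cdot)$ in the last variable, and since $g(t,x,\cdot)$ is differentiable, the Young equality $g(t,x,z) + g^*(t,x,\nabla g(t,x,z)) = \langle z, \nabla g(t,x,z)\rangle$ holds pointwise. Applying this with $z = \sigma'(t,x)\psi(t,x)$ and writing $\phi(t,x) := \nabla g(t,x,\sigma'(t,x)\psi(t,x))$, we get the pointwise identity
\begin{equation*}
g^*(t,x,\phi(t,x)) = \langle \sigma'(t,x)\psi(t,x), \phi(t,x)\rangle - g(t,x,\sigma'(t,x)\psi(t,x)).
\end{equation*}
Since $\psi\in L^g_\nabla\subset L^g$, Lemma~\ref{lem dual ext} tells us $G(\psi) = \iint g(t,x,\sigma'(t,x)\psi(t,x))\,\mu_t(\dd x)\dd t < \infty$, so the negative term integrates to something finite. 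Hence it remains only to control the integral of the inner-product term $\iint \langle \sigma'\psi, \phi\rangle\,\mu_t(\dd x)\dd t = \iint \langle \psi, \sigma\phi\rangle\,\mu_t(\dd x)\dd t$.

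To bound that cross term I would invoke the generalized Young/Hölder inequality for the Orlicz-like pair $(L^g, L^{g^*})$, which by Lemma~\ref{lem Kozek} are in duality with pairing $(\beta,\psi)\mapsto\iint \beta'a\psi\,\mu_t(\dd x)\dd t$. Concretely, for $\alpha>0$, convexity of $g^*$ and the fact that $g^*$ is even give
\begin{equation*}
\langle \psi, \sigma\phi\rangle \le \alpha\, g(t,x,\sigma'\psi/\alpha)\cdot\text{(something)} + \dots
\end{equation*}
— more cleanly: by Young's inequality applied to the pair at scale $\alpha$, $\langle \sigma'\psi, \phi\rangle \le g(t,x,\alpha\,\sigma'\psi) + g^*(t,x,\phi/\alpha)$ for any $\alpha>0$, but this has $g^*(\phi/\cdot)$ on the wrong side; instead one should argue directly that $\|\phi\|_{g^*}<\infty$. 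The cleanest route: we already have the pointwise identity $g^*(t,x,\phi) + g(t,x,\sigma'\psi) = \langle \sigma'\psi,\phi\rangle$, and also, since $g(t,x,\cdot)$ is the conjugate of $g^*(t,x,\cdot)$, for \emph{any} $\lambda>0$ the inequality $\langle \sigma'\psi/\lambda, \phi\rangle \le g^*(t,x,\phi) + g(t,x,\sigma'\psi/\lambda)$. Subtracting suitable multiples and using Assumption {\bf (C)}.5 (the $\ell$-condition, which by the proof of Lemma~\ref{lem Kozek} also holds for $g$), one extracts that $\iint g^*(t,x,\phi/(2\ell))\,\mu_t\dd t$ is dominated by a fixed multiple of $G(\psi)$ plus $\iint H\,\mu_t\dd t$, all finite; hence $\|\phi\|_{g^*}<\infty$. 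The $L^1(\mu_t(\dd x)\dd t)$-membership then follows from the $L^{g^*}$-bound (the $p'$-growth of $g^*$, i.e.\ Assumption {\bf (C)}.3 read on the conjugate side, makes $L^{g^*}\subset L^1$ on a finite measure — here $\mu_t(\dd x)\dd t$ is a finite measure on $[0,T]\times\RR^q$ since $T<\infty$), so $\phi\in L^{g^*}$ as claimed.

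The main obstacle I anticipate is making the extraction of the $L^{g^*}$-bound on $\phi=\nabla g(\cdot,\sigma'\psi)$ precise without circularity: the pointwise Young identity gives $g^*(\phi)$ in terms of $\langle\sigma'\psi,\phi\rangle$, but bounding that cross term seems to require already knowing $\phi\in L^{g^*}$ to apply Hölder. The resolution is to work at a shrunken scale: choose $\ell$ from {\bf (C)}.5 and exploit that $g^*(t,x,y)\le g^*(t,x,\ell y)/(2\ell) + H(t,x)$ implies, via conjugacy, a matching growth-control on $g$ that lets one write $\langle\sigma'\psi,\phi\rangle - g^*(t,x,\phi) = g(t,x,\sigma'\psi) \ge (2\ell)\,g(t,x,\sigma'\psi/\ell) - (2\ell)\,\tilde H(t,x)$ for an integrable $\tilde H$; rearranging gives $g^*(t,x,\phi)\le \langle\sigma'\psi,\phi\rangle - (2\ell)g(t,x,\sigma'\psi/\ell) + (2\ell)\tilde H$, and then absorbing $\langle\sigma'\psi,\phi\rangle\le g^*(t,x,\phi/(2\ell))\cdot(2\ell) \cdot(\text{scaling}) + \dots$ — one iterates once to close the estimate. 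Alternatively, and perhaps more simply, one truncates: set $\phi_n := \phi\,\mathbf 1_{\{|\phi|\le n\}}$, for which $\phi_n\in L^\infty\subset L^{g^*}$ trivially, apply Hölder $\iint\langle\sigma'\psi,\phi_n\rangle \le 2\|\psi\|_g\|\phi_n\|_{g^*}$ together with the pointwise identity restricted to $\{|\phi|\le n\}$ to get $\iint_{\{|\phi|\le n\}} g^*(t,x,\phi)\,\mu_t\dd t \le 2\|\psi\|_g\|\phi_n\|_{g^*} + G(\psi)$, deduce a bound on $\|\phi_n\|_{g^*}$ uniform in $n$ (solving the resulting scalar inequality $\|\phi_n\|_{g^*}\le $ affine in $\|\phi_n\|_{g^*}$ is where {\bf (C)}.5 again enters, via the characterization of $\|\cdot\|_{g^*}$), and finally let $n\to\infty$ with monotone convergence. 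This truncation argument is the one I would actually write down, as it sidesteps the conjugacy gymnastics.
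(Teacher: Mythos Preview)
Your starting point---the Fenchel--Young equality reducing the claim to the integrability of the cross term $\iint \langle\sigma'\psi,\phi\rangle\,\mu_t(\dd x)\dd t$---matches the paper exactly. Where you diverge is in trying to close this estimate by truncation and a bootstrap on $\|\phi_n\|_{g^*}$; you sketch this but do not actually complete it, and you correctly flag the circularity hazard without finding the clean way around it.

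The paper's resolution is a one-line pointwise bound that you had the ingredients for. Convexity and differentiability of $g(t,x,\cdot)$ give the subgradient inequality $g(t,x,2y)\ge g(t,x,y)+\langle y,\nabla g(t,x,y)\rangle$. On the other hand, Assumption {\bf (C)}.5 on $g^*$ implies {\bf (C)}.4 written for $g$ (as you yourself note, citing Lemma~\ref{lem Kozek}), namely $g(t,x,2y)\le C\,g(t,x,y)+h(t,x)$ with $h$ integrable. Combining the two yields directly
\[
\langle y,\nabla g(t,x,y)\rangle \le (C-1)\,g(t,x,y)+h(t,x),
\]
so with $y=\sigma'(t,x)\psi(t,x)$ the cross term integrates to at most $(C-1)G(\psi)+\iint h\,\mu_t(\dd x)\dd t<\infty$. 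No circularity, no truncation, no H\"older.

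Your truncation route is not doomed---invoking the $g^*$-analogue of the coercivity \eqref{eq coercivity} (available from {\bf (C)}.5 itself) one can rule out $\|\phi_n\|_{g^*}\to\infty$ and close the loop---but it is a genuine detour. The lesson worth retaining: whenever the cross term in Fenchel--Young has the special form $\langle y,\nabla g(y)\rangle$, a $\Delta_2$-condition on $g$ bounds it directly by $g(y)$ plus an integrable remainder, bypassing duality entirely.
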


\begin{proof}
Denote $f(t,x):= \nabla g(t,x,\sigma(t,x)'\psi(t,x))$. By definition of convex conjugates, we have
\begin{align*} & \iint g^*(t,x,f(t,x))\mu_t(\dd x) \dd t \\ = & -\iint g(t,x,\sigma(t,x)'\psi(t,x)) \mu_t(\dd x) \dd t + \iint \psi(t,x)'\sigma(t,x) f(t,x)\mu_t(\dd x) \dd t  ,\end{align*}
so finiteness of the l.h.s.\ is equivalent to that of the second term in the r.h.s., since $\sigma'$ is bounded. By convexity, $g(t,x,2y)\geq g(t,x,y)+ y'\nabla g(t,x,y)$. From Assumption ${\bf (C).5}$ on $g^*$ we can conclude that ${\bf (C).4}$ holds for $g$ instead. Thus, there is $c>0$ and $\alpha(\cdot,\cdot)$ non-negative and $\mu_t(\dd x)\dd t$-integrable such that 
\begin{equation}\label{eq estim grad}
y'\nabla g(t,x,y) \leq c g(t,x,y)+\alpha(t,x).
\end{equation}
 In particular, $\psi(t,x)'\sigma(t,x) f(t,x)\leq c g(t,x,\sigma(t,x)'\psi(t,x))+\alpha(t,x) $, so we conclude that the expressions above are finite as desired.  
\end{proof}

\begin{lemma}
We have 
\begin{equation}
\label{eq coercivity}
\lim_{\|\psi\|_g\to\infty}\frac{G(\psi)}{\|\psi\|_g}\,\, =\,\, +\infty,
\end{equation}
with $G$ as in Lemma \ref{lem dual ext}. Further, $G$ is directionally G\^ateaux differentiable, and for all $\psi \in L_\nabla^g,\,w\in\C$ we have:
\begin{equation}\label{eq gateaux}\textstyle
DG(\psi)(\nabla w)\,\,=\,\, \iint \nabla g(t,x,\sigma'(t,x)\psi(t,x))'\sigma'(t,x)\nabla w(t,x)\mu_t(\dd x)\dd t\,.
\end{equation}
\end{lemma}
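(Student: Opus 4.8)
The statement has two parts: the coercivity relation \eqref{eq coercivity} and the G\^ateaux differentiability with the formula \eqref{eq gateaux}. I would treat them separately, beginning with coercivity. The plan for \eqref{eq coercivity} is to exploit the $\Delta_2$-type condition available on $g$, namely that Assumption ${\bf (C).5}$ on $g^*$ translates (as noted in the proof of Lemma \ref{lem Kozek}) into the analogue of ${\bf (C).4}$ for $g$: there exist $\ell>1$ and an integrable $H\geq 0$ with $g(t,x,y)\leq \frac{1}{2\ell}g(t,x,\ell y)+H(t,x)$, equivalently $g(t,x,\ell y)\geq 2\ell\, g(t,x,y)-2\ell H(t,x)$. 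Fix $\psi$ with $\|\psi\|_g=:r>0$; by definition of the Luxemburg norm and norm-continuity of $G$ we have $\iint g(t,x,\sigma'\psi/r)\,\mu_t(\dd x)\dd t=1$ (for $r$ large, using that $G$ is finite and continuous so the infimum defining the norm is attained with equality). Writing $\psi = r\cdot(\psi/r)$ and iterating the $\Delta_2$ inequality $k$ times with $\ell^k\le r<\ell^{k+1}$, one gets $\iint g(t,x,\sigma'\psi)\,\mu_t(\dd x)\dd t\geq c\,\ell^{k}\cdot 1 - C\iint H$ for suitable constants, so $G(\psi)\geq c'\,r - C'$ with $c' = c/\ell$ growing... wait, that only gives linear growth. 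To get the $+\infty$ limit I would instead iterate so as to produce a factor like $r^{1+\delta}$: the inequality $g(\ell y)\ge 2\ell g(y)$ means each doubling-of-scale-by-$\ell$ multiplies the integral by at least $2\ell>\ell$, so after $k$ steps the integral of $g(\sigma'\psi)$ is at least $(2\ell)^k$ (minus integrable error), while $r\approx \ell^k$; hence $G(\psi)/r\gtrsim (2\ell)^k/\ell^k=2^k\to\infty$ as $r\to\infty$. That is the mechanism. The main obstacle here is bookkeeping the integrable error terms $H$ through the $k$ iterations and making sure they do not overwhelm the main term — but since the main term grows like $2^k$ and the accumulated error grows at most like a geometric sum with ratio $2\ell$, i.e.\ comparably, one must be slightly careful; dividing by $r$ and using $k\to\infty$ still forces the quotient to diverge because $2^k/\ell^k\to\infty$ regardless.

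For the differentiability part, I would argue directly from the definition of the directional derivative. Fix $\psi\in L^g_\nabla$ and $w\in\C$, and consider
$$
\frac{G(\psi+\varepsilon\nabla w)-G(\psi)}{\varepsilon}=\iint \frac{g(t,x,\sigma'(t,x)\psi+\varepsilon\sigma'(t,x)\nabla w)-g(t,x,\sigma'(t,x)\psi)}{\varepsilon}\,\mu_t(\dd x)\dd t.
$$
Pointwise, by differentiability of $g$ in its last argument (guaranteed after Assumption ${\bf (C)}$), the integrand converges as $\varepsilon\to0+$ to $\nabla g(t,x,\sigma'\psi)'\sigma'\nabla w$, which by Lemma \ref{lem integrabilidad} is $\mu_t(\dd x)\dd t$-integrable (it is the pairing of $\nabla g(t,x,\sigma'\psi)\in L^{g^*}$ with the bounded compactly supported $\sigma'\nabla w\in L^g$). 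To pass to the limit I would dominate: by convexity of $g$, the difference quotient is monotone in $\varepsilon$, and for $0<\varepsilon\le 1$ it is bounded above by $g(t,x,\sigma'\psi+\sigma'\nabla w)-g(t,x,\sigma'\psi)$ and below by $\nabla g(t,x,\sigma'\psi)'\sigma'\nabla w$; both bounds are integrable (the upper one because $G$ is finite on all of $L^g$ and $\nabla w$ is bounded, so $\sigma'\psi\pm\sigma'\nabla w\in L^g$, and the lower one by Lemma \ref{lem integrabilidad}). Dominated convergence then yields \eqref{eq gateaux}, and the existence of the one-sided directional derivative along every direction $\nabla w$ with $w\in\C$ is exactly the claimed directional G\^ateaux differentiability.

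The main obstacle is really only in the coercivity half — specifically, extracting super-linear rather than merely linear growth from the $\Delta_2$ condition, and controlling the integrable remainder terms across the iteration. The differentiability half is routine once one invokes Lemma \ref{lem integrabilidad} for integrability of $\nabla g(t,x,\sigma'\psi)$ and uses convexity to get a dominating function; I do not expect any difficulty there. One subtlety worth a line: one should note that the difference quotient used as the upper dominating function, $g(t,x,\sigma'\psi+\sigma'\nabla w)-g(t,x,\sigma'\psi)$, is indeed in $L^1(\mu_t(\dd x)\dd t)$ because both $g(t,x,\sigma'(\psi+\nabla w))$ and $g(t,x,\sigma'\psi)$ are (membership in $L^g$ for $\psi$ and $\psi+\nabla w$, using the characterization of $L^g$ recorded after Assumption ${\bf (C)}$ and the boundedness of $\nabla w$), and similarly the lower bound is in $L^1$ by Lemma \ref{lem integrabilidad}.
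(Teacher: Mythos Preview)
Your differentiability argument is correct and in fact slightly cleaner than the paper's: the paper writes the difference quotient in mean-value form $\int_0^1\nabla g(t,x,\sigma'[\psi+\epsilon\theta\nabla w])'\sigma'\nabla w\,\dd\theta$ and dominates via the bound $y'\nabla g(t,x,y)\le c\,g(t,x,y)+\alpha(t,x)$ (derived in the proof of Lemma~\ref{lem integrabilidad}), whereas your convexity sandwich uses only Lemma~\ref{lem integrabilidad} and finiteness of $G$ directly.

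Your coercivity argument, however, has a genuine gap in the error bookkeeping---one you flag (``one must be slightly careful'') but do not actually resolve. If you iterate $g(\ell y)\ge 2\ell\,g(y)-2\ell H$ exactly $k$ times with $k$ tied to $r=\|\psi\|_g$ via $\ell^k\le r<\ell^{k+1}$, the accumulated error is $\sum_{j=1}^k(2\ell)^j H\sim (2\ell)^k H$, i.e.\ of the \emph{same} order as the main term $(2\ell)^k$. After dividing by $r\sim\ell^k$ both contributions are $\sim 2^k$, so your lower bound reads $G(\psi)/r\gtrsim 2^k\bigl(1-c\iint H\,\mu_t(\dd x)\dd t\bigr)$, which is useless whenever $\iint H$ is not small---and nothing in the assumptions forces it to be. Your proposed resolution ``$2^k/\ell^k\to\infty$ regardless'' is not right: that quotient does not arise here, and even if it did it need not diverge (nothing says $\ell<2$). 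The paper avoids this by \emph{decoupling} the number of iterations from $\|\psi\|_g$: fix $p$, iterate $p$ times to get $g(\ell^p y)\ge 2^p\ell^p g(y)-c_p H$ with $c_p$ depending only on $p$; then for $\|\psi\|_g\ge\ell^p$ use convexity and $g(\cdot,\cdot,0)=0$ to pass from $g(\sigma'\psi/\ell^p)$ to $g(\sigma'\psi/\|\psi\|_g)$, obtaining
\[
\frac{G(\psi)}{\|\psi\|_g}\ \ge\ 2^p\ -\ \frac{c_p}{\|\psi\|_g}\iint H\,\mu_t(\dd x)\dd t.
\]
Now let $\|\psi\|_g\to\infty$ for fixed $p$ (killing the error), then $p\to\infty$. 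With this modification your plan works.

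A minor labeling issue: the inequality you need on $g$ is of the form ${\bf(C).5}$, and it is ${\bf(C).4}$ on $g^*$ that yields it (not ${\bf(C).5}$ on $g^*$, which yields ${\bf(C).4}$ on $g$---that is the direction recorded in the proof of Lemma~\ref{lem Kozek} you cite). The inequality you actually wrote down is the correct one.
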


\begin{proof}
Assumption ${\bf (C)}.4$ on $g^*$ implies Assumption ${\bf (C)}.5$ written on $g$ instead (for some $\ell >1$ and some integrable $H$). Applying this inequality repeatedly, one finds for each $p\in \mathbb{N}$ that $g(t,x,y\ell^p)\geq 2^p\ell^p g(t,x,y) - k\ell^p H(t,x) $. Let $\psi$ be s.t.\ $\|\psi\|_g\geq \ell^p$, then
\begin{multline*}
\frac{g(t,x,\sigma'\psi(t,x))}{\|\psi\|_g}= \frac{g(t,x,\sigma'\psi(t,x)\ell^p/\ell^p)}{\|\psi\|_g}\geq \frac{2^p\ell^p g(t,x,\sigma'\psi(t,x)/\ell^p)}{\|\psi\|_g} - \frac{k\ell^p H(t,x)}{\|\psi\|_g} \\
\geq 2^p g\left(t,x,\frac{\ell^p\sigma'\psi(t,x)}{\ell^p \|\psi\|_g}\right) - \frac{k\ell^p H(t,x)}{\|\psi\|_g} ,
\end{multline*}
since $g(t,x,\cdot)$ is convex and clearly null at $0$. Since $g$ is finite (by the superlinear growth of $g^*$) and convex in its last argument, it is a continuous function of  it. By monotone convergence, this proves $$\textstyle \iint g(t,x,\sigma'\psi(t,x)/\|\psi\|_g)\mu_t(\dd x)\dd t = 1,$$
so by the previous inequalities we find $$\textstyle G(\psi)/ \|\psi\|_g  = \iint\frac{g(t,x,\sigma'\psi(t,x))}{\|\psi\|_g}\mu_t(\dd x)\dd t \geq 2^p- k\ell^p \frac{ \iint H(t,x)\mu_t(\dd x)\dd t }{\|\psi\|_g  }.$$
Taking $\|\psi\|_g \to \infty$ and then $p\to\infty$ implies \eqref{eq coercivity}. As for the G\^ateaux differentiability, we must compute
$\textstyle \lim_{\epsilon\to 0}\frac{G(\psi+\epsilon \nabla w)-G(\psi)}{\epsilon}\, ,$
which is equal to
$$\textstyle\lim_{\epsilon\to 0}\,\iint \int_0^1\nabla g\bigl(t,x,\sigma'(t,x)[\psi(t,x)+\epsilon\theta\nabla w(t,x)]\bigr) '\sigma'(t,x)\nabla w(t,x)\dd\theta\mu_t(\dd x)\dd t.$$
But the innermost integral, as a function of $(t,x)$ converges a.s.\ when $\epsilon\to 0$ to $ \nabla g(t,x,\sigma'(t,x)\psi(t,x))'\sigma'(t,x)\nabla w(t,x)$. Applying the bound \eqref{eq estim grad} and the integrability result in Lemma \ref{lem integrabilidad} we may use dominated convergence to conclude.
\end{proof}


\section{No duality gap}\label{No gap}

For our main results in Section \ref{main results} it will be crucial to establish the equality between the Primal \eqref{eqprimal extendido} and the Dual \eqref{eq dual original} problems. We obtain this in the present section. So far we have kept the flow of marginals $\mu:=\{\mu_t\}_t$ fixed (see Assumption ${\bf (B)}$). For this part of the article we shall \emph{vary} this flow of marginals. Thus, we let $$\nu:=\{\nu_t\}_t \in C([0,T];\P(\RR^q)),$$ stand for a generic weakly continuous flow of measures with $\nu_0= m_0$, and use the notation $(P[\nu])$ and $(D[\nu])$ respectively for the Primal and Dual problem under such flow, in accordance to the notations used so far. For convenience, we write $P[\nu]$ and $D[\nu]$ for the value of these problems.

Let us define
\begin{align}
P^*[f] &:= \sup_{\nu\in C([0,T];\P(\RR^q)),\,\nu_0=m_0}\left \{ \iint f(t,x) \nu_t (\dd x) \dd t  -  P[\nu] \right\},\\
P^{**}[\nu] &:= \sup_{f \in \C}\left \{ \iint f(t,x)\nu_t(\dd x) \dd t  -  P^*[f] \right\}.
\end{align}

\begin{lemma}\label{lem multiple}
We have:
\begin{enumerate}
\item  $\{\nabla f:\, f\in C_b^{1,2}([0,T]\times\RR^q)\}\subset L^g_\nabla$.
\item Problem $(D[\nu])$ is equal to
\begin{align}\textstyle
\tag{D3$[\nu]$}
\label{eq dual 3}
\sup\limits_{h\in C_b^{1,2}([0,T]\times\RR^q)}&\Bigl \{ -\int h(T,x) \nu_T(\dd x) + \int h(0,x) m_0(\dd x) + \Bigr . \\& \left . \iint [  \L_t h(t,x)  - g(t,x,\sigma'(t,x)\nabla h(t,x))]\nu_t(\dd x)\dd t\right \},\notag
\end{align}
\item Problem $(D[\nu])$ is equal to 
\begin{align}\label{eq extended crucial}\tag{D4$[\nu]$} 
\sup_{f \in\C}\left \{ \iint f(t,x)\nu_t (\dd x) \dd t  -\int \, \left [E^g_0\left (  \int_0^T f(t,X_t)\dd t   \right )\right ](x_0)\, m_0(\dd x_0)   \right\},
\end{align}
where $E^g_0$ denotes the minimal supersolution operator.
\end{enumerate}
\end{lemma}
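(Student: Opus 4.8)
\textbf{Proof plan for Lemma \ref{lem multiple}.}

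The plan is to treat the three items sequentially, since each builds on the previous one. For item (1), I would show that any $f\in C_b^{1,2}([0,T]\times\RR^q)$ has its gradient in $L^g_\nabla$ by a truncation/mollification argument: multiply $f$ by a smooth spatial cutoff $\chi_R$ supported in a ball of radius $2R$ and equal to $1$ on $B_R$, and by a temporal cutoff pushing the support into $(0,T)$, obtaining functions $f_R\in\C$. One must check that $\nabla f_R\to\nabla f$ in the $\|\cdot\|_g$-norm. Since $\nabla f$ is bounded (because $f\in C_b^{1,2}$) and the error $\nabla(f-f_R)$ is supported outside a growing ball (plus a controlled contribution from $\nabla\chi_R$ times the bounded $f$, which decays like $1/R$), the key is that $\iint_{|x|\geq R} g(t,x,\sigma'\psi)\,\mu_t(\dd x)\dd t\to 0$ for bounded $\psi$; this follows from Assumption ${\bf(C)}.2$ written for $g$ (which holds thanks to Lemma \ref{lem Kozek}'s characterization of $L^g$, since bounded functions lie in $L^g$) together with dominated convergence. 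Hence $\nabla f\in\overline{\{\nabla w:w\in\C\}}^{L^g}=L^g_\nabla$.

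For item (2), I would integrate by parts in time. For $h\in C_b^{1,2}$, the flow constraint and weak continuity of $\nu$ give, for smooth enough test functions, the identity
$$\iint \L_t h(t,x)\,\nu_t(\dd x)\dd t \;=\; \int h(T,x)\nu_T(\dd x)-\int h(0,x)m_0(\dd x)\;-\;\iint\p_t h\cdot(\text{correction}),$$
but more precisely the statement is purely about the value of $(D[\nu])$: one inequality is immediate because $\C\subset C_b^{1,2}$ and, for $w\in\C$ with compact support in $(0,T)\times\RR^q$, the boundary terms $\int h(T,x)\nu_T(\dd x)$ and $\int h(0,x)m_0(\dd x)$ vanish, so (D3$[\nu]$) restricted to $w\in\C$ reduces exactly to $(D[\nu])$; thus value(D3$[\nu])\geq$value$(D[\nu])$. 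For the reverse inequality I would take $h\in C_b^{1,2}$ and approximate it by $h_R\in\C$ as in item (1), also incorporating the fact that adding an affine-in-time function of the form $t\mapsto (1-t/T)\,\phi_0 + (t/T)\phi_T$ does not change anything since $\nu_0=m_0$ is prescribed — more carefully, one shows that the extra boundary terms in (D3) are precisely compensated by $\iint \p_t h$ contributions so that the supremum over $C_b^{1,2}$ is not larger than the supremum over $\C$. The cleanest route is: given $h\in C_b^{1,2}$, set $\tilde h := h - (1-t/T)\,h(0,\cdot)$ does not quite land in $\C$, so instead use the cutoff approximation directly and control the error terms using item (1)'s norm convergence together with norm-continuity of $G$ (Lemma \ref{lem dual ext}) and of $L$ (Lemma \ref{lem extension functional}); the boundary terms for $h_R$ tend to the boundary terms for $h$ by dominated convergence against $\nu_T$ and $m_0$, while $\iint\L_t h_R\,\nu_t\dd t\to\iint\L_t h\,\nu_t\dd t$ after an integration by parts that moves $\p_t$ onto $\nu$ (legitimate since $\nu$ solves a martingale problem / continuity equation in the appropriate weak sense). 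Equality of the two values follows.

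For item (3), I would combine item (2)'s reformulation in terms of $C_b^{1,2}$ functions with Lemma \ref{identificacionconjugadas ext}. The point is that for $f\in\C$, applying \eqref{eq ineq} with equality (valid since $f(t,X_t)$ integrated in $t$ is bounded and the map is, in fact, continuous and bounded — more precisely one uses the lower-semicontinuous-and-bounded-below case, or directly the representation for bounded continuous $F$) yields
$$\int\bigl[E^g_0\bigl(\textstyle\int_0^T f(t,X_t)\dd t\bigr)\bigr](x_0)\,m_0(\dd x_0)\;=\;\sup_{\QQ}\Bigl\{\EE^\QQ\bigl[\textstyle\int_0^T f(t,X_t)\dd t\bigr]-\tilde I(\QQ)\Bigr\}.$$
Now $\EE^\QQ[\int_0^T f(t,X_t)\dd t]=\iint f(t,x)\,\QQ_t(\dd x)\dd t$ depends on $\QQ$ only through its marginal flow, so splitting the supremum over $\QQ\in\Q$ into an outer supremum over flows $\nu'$ and an inner infimum giving $P[\nu']$, one recognizes the right-hand side of (D4$[\nu]$) as $\iint f\,\nu_t\dd t - P^*[f]$ in the notation introduced before the lemma. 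Taking the supremum over $f\in\C$ and using that this Fenchel-type expression, via item (2) (the $C_b^{1,2}$ form of $(D[\nu])$ is, after the integration by parts, exactly $\iint f\nu_t\dd t - \sup_{\nu'}\{\iint f\nu'_t\dd t - P[\nu']\}$ with $f\leftrightarrow \L_\cdot h - g(\cdot,\sigma'\nabla h)$ playing the role of the linear functional), equals value$(D[\nu])$. The main obstacle I anticipate is item (2): making the integration-by-parts in time rigorous for general $C_b^{1,2}$ functions tested against a merely weakly continuous flow $\nu$ solving a martingale problem, and verifying that the cutoff approximation genuinely controls the boundary terms in the $\|\cdot\|_g$-norm rather than just pointwise — everything else is bookkeeping with the conjugate relations already established.
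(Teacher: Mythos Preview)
Your plan for Points 1 and 2 is workable but differs from the paper's. For Point 1 the paper argues via weak closure and Mazur's lemma rather than norm approximation by cutoffs: one shows $\{\nabla w:w\in\C\}$ is $\sigma(L^g,L^{g^*})$-dense in $\{\nabla f:f\in C_b^{1,2}\}$, and then Mazur gives norm closure for free. Your cutoff route can probably be made to work, but the claim that ``bounded functions lie in $L^g$'' needs justification, and you implicitly use that $\iint g(t,x,\sigma'c)\,\nu_t(\dd x)\dd t<\infty$ for constants $c$, which is not entirely immediate. For Point 2 the paper assumes $D[\nu]<\infty$ (the case $=\infty$ being trivial), so that $L$ is defined on all of $L^g_\nabla$, and then simply verifies by approximation that $L(\nabla h)=-\int h(T)\,\nu_T+\int h(0)\,m_0+\iint\L_t h\,\nu_t\dd t$ for $h\in C_b^{1,2}$; the conclusion is then immediate from Lemma \ref{lem dual ext}. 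Your appeal to ``$\nu$ solves a martingale problem / continuity equation'' is misplaced: $\nu$ is a \emph{generic} weakly continuous flow with $\nu_0=m_0$, and nothing more is assumed.

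The serious gap is in Point 3. Your computation $\int E^g_0\bigl(\int_0^T f(t,X_t)\dd t\bigr)\,\dd m_0=P^*[f]$ is correct and is in fact exactly how the paper proves the \emph{next} lemma (Lemma \ref{lem P*,D,P**}). But this only gives $D4[\nu]=P^{**}[\nu]$, and you still need $D[\nu]=P^{**}[\nu]$. Your claim that this follows ``via item (2)'' is unsubstantiated: the objective in $(D3[\nu])$ is not of the form $\iint f\,\nu-P^*[f]$ for any single $f$, so the identification you sketch does not go through. The easy direction $D4[\nu]\geq D[\nu]$ does follow by taking $f^w:=\L_t w-g(\sigma'\nabla w)$ and checking $E^g_0\bigl(\int_0^T f^w\,\dd t\bigr)\leq 0$ (via the explicit supersolution $Y_t=-w(t,X_t)+\int_0^t f^w\,\dd s$), but the hard direction $D4[\nu]\leq D[\nu]$ requires a genuinely new ingredient. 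The paper supplies it via PDE/BSDE theory: by \cite{markovminsupersol}, $E^g_0\bigl(\int_0^T f\,\dd t\bigr)(x_0)=u(0,x_0)$ where $u$ is the minimal viscosity supersolution of $\L_t u+g(t,x,\sigma'\nabla u)+f\leq 0$, $u(T,\cdot)\geq 0$; and by Fleming--Vermes \cite{FlemingVermes}, this minimal viscosity supersolution equals the pointwise infimum over \emph{classical} supersolutions $\Phi$. Each such $\Phi$ yields an admissible $h=-\Phi$ for $(D3[\nu])$, giving $\iint f\,\nu-\int E^g_0\,\dd m_0\leq D3[\nu]+\epsilon=D[\nu]+\epsilon$. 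Without this PDE step (or an equivalent device) your argument for Point 3 is circular: it presupposes $D=P^{**}$, which in the paper is a \emph{consequence} of the present lemma, not an input to it.
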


\begin{proof}

For Point 1 we follow the final part of the proof of \cite[Proposition 3.2]{CattLeo}. One first observes that $\{\nabla w:w\in\C\}$ is dense in $\{\nabla f:f\in C_b^{1,2}\}$ with respect to the weak topology $(\nabla f,\nabla w)=\iint \nabla f a \nabla w \dd\nu_t\dd t$. By Ascoli Theorem, this shows that $\{\nabla f:f\in C_b^{1,2}\}$ is in the closure of $\{\nabla w:w\in\C\}$ w.r.t.\ the weak topology $\sigma(L^g, L^{g^*})$. But by Mazur's Lemma this closure coincides with $L^g_\nabla$ and we conclude. 

We prove Point 2. Clearly $D3[\nu]\geq D[\nu]$. For the converse, we may assume $D[\nu]<\infty$. One verifies, for all $h\in C_b^{1,2}$, that 
$$ L(\nabla h) = -\int h(T,x) \nu_T(\dd x) + \int h(0,x) \nu_0(\dd x) +\iint  \L_t h(t,x)\nu_t(\dd x)\dd t, $$
by Point 1 and standard approximation arguments. We conclude by Lemma \ref{lem dual ext}.

Finally we prove Point 3. Let $f\in\C$ and observe that $$E^g_0\left(\int_0^Tf(t,X_t)\dd t\right)=E^{f+g}_0\left  ( 0 \right ),$$ 
both being functions of $X_0$. By \cite[Theorem 5.2]{markovminsupersol}, which is applicable thanks to \cite[Proposition 3.5.(iv)]{markovminsupersol} and our Assumotion ${\bf (C)}.3$, the above values equal $u(0,X_0)$, where $u$ is the minimal viscosity supersolution of
$$\L_t u +g(t,x,\sigma'(t,x)\nabla u(t,x))+f(t,x)\leq 0,\,\,\, u(T,\cdot)\geq 0. $$
Let $\Phi$ be a sufficiently smooth function\footnote{For instance $\Phi(t,x):=C-t\sup_{s,y}|f(s,y)|$ with $C\geq T\sup_{s,y}|f(s,y)|$ fulfils this.} such that $\L_t\Phi(t,x) + g(t,x,\sigma'\nabla \Phi(t,x))+ f(t,x)\leq 0$ and $\Phi(T,\cdot)\geq 0$. Then
$$\iint f(t,x)\nu_t(\dd x) \dd t \leq  \int\Phi(T,x)\nu_T(\dd x) - \iint[ \L_t\Phi(t,x) + g(t,x,\sigma'\nabla \Phi(t,x))]\nu_t (\dd x) \dd t. $$
By \cite[Theorem 5]{FlemingVermes}, we actually have $u(0,X_0)=\inf \Phi(0,X_0)$ $m_0$-a.s., namely that the minimal viscosity supersolution is the infimum over classical supersolutions. From this and the previous considerations, we obtain for each $\epsilon >0$ the existence of $\Phi=\Phi^\epsilon$ such that
\begin{align*}
 & \iint f(t,x)\nu_t (\dd x) \dd t  -\int\,E_0^g\left (  \int_0^T f(t,X_t)\dd t   \right )\,\dd m_0(X_0)  \\ =&  -\int u(0,x)m_0(\dd x) +  \iint f(t,x)\nu_t(\dd x) \dd t\\\leq & \epsilon - \int \Phi(0,x)m_0(\dd x)  + \int\Phi(T,x)\nu_T(\dd x) -  \iint[ \L_t \Phi(t,x)+g(t,x,\sigma'\nabla \Phi(t,x)) ]\nu_t (\dd x) \dd t\\
 \leq &\epsilon+  D3[\nu]\\ = &\epsilon+ D[\nu].
\end{align*}
The last inequality comes from taking $-h$ instead of $h$ in \eqref{eq dual 3}.
So $D4[\nu]\leq D[\nu]$. The converse inequality follows by taking, for each $w\in\C$, $f^w:=\L_t w - g(\sigma'\nabla w)$, and elementary approximation arguments. 
\end{proof}

\begin{lemma}\label{lem P*,D,P**}
We have
\begin{align}
P^*[f]&= \int E^g_0\left( \int_0^T f(t,X_t)\dd t\right)(x_0)\dd m_0(x_0),\mbox{  for }f\in C_b([0,T]\times \RR^q), \label{P*=-j}
\end{align}
and
\begin{align}
\label{eq D=P**}
D[\nu] & = \sup_{f\in \C} \left\{ \iint f(t,x)\nu_t(\dd x)\dd t   -\int E^g_0\left( \int_0^T f(t,X_t)\dd t\right)(x_0)\dd m_0(x_0) \right\},\\
&= P^{**}[\nu].\label{eq P** aux}
\end{align}
\end{lemma}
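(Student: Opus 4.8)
The plan is to prove the three assertions essentially in sequence, exploiting the earlier results as black boxes. For the first identity \eqref{P*=-j}, I would start from the definition $P^*[f] = \sup_{\nu}\{\iint f\,\dd\nu_t\,\dd t - P[\nu]\}$ and recall from Lemma \ref{lem P*,D,P**}'s companion results — specifically Lemma \ref{identificacionconjugadas ext} together with Lemma \ref{identificacionconjugadas} — that $P[\nu]$ is itself a supremum/infimum that interacts nicely with the minimal-supersolution operator. The key observation is that $P[\nu] = \inf_{\QQ \in \Q(\nu)} \tilde I(\QQ)$, and since $\Q = \bigcup_\nu \Q(\nu)$ with the marginal constraint $\QQ \circ X_t^{-1} = \nu_t$, one has
\begin{align*}
\sup_\nu \left\{ \iint f\,\dd\nu_t\,\dd t - P[\nu]\right\} = \sup_{\QQ \in \Q}\left\{ \EE^\QQ\!\!\left[\int_0^T f(t,X_t)\,\dd t\right] - \tilde I(\QQ)\right\}.
\end{align*}
By Lemma \ref{identificacionconjugadas ext}, since $F := \int_0^T f(t,X_t)\,\dd t$ is bounded and continuous (hence lower semicontinuous and bounded below, as $f \in C_b$), the right-hand side equals $\int [E^g_0(F)](x_0)\,\dd m_0(x_0)$, which is exactly \eqref{P*=-j}.

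For the second identity \eqref{eq D=P**}, this is simply the combination of Point 3 of Lemma \ref{lem multiple} (which states $D[\nu] = D4[\nu]$, and $D4[\nu]$ is precisely the claimed supremum over $f \in \C$) together with the already-established \eqref{P*=-j} specialized to $f\in\C \subset C_b$. So this step is immediate once the two ingredients are in place; I would just remark that the $E^g_0$ term appearing in $D4[\nu]$ is literally $P^*[f]$ by \eqref{P*=-j}.

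The third identity \eqref{eq P** aux}, namely $D[\nu] = P^{**}[\nu]$, then follows by unwinding definitions: $P^{**}[\nu] = \sup_{f\in\C}\{\iint f\,\dd\nu_t\,\dd t - P^*[f]\}$, and substituting \eqref{P*=-j} for $P^*[f]$ gives exactly the expression in \eqref{eq D=P**}, hence $P^{**}[\nu] = D[\nu]$. The main obstacle — and the only place requiring genuine care — is the interchange of suprema in the first step, i.e.\ justifying that ranging over all flows $\nu$ satisfying \textbf{(B)} and then over $\QQ \in \Q(\nu)$ is the same as ranging over $\QQ \in \Q$ directly; this uses that every $\QQ\in\Q$ has \emph{some} weakly continuous flow of marginals with $\nu_0 = m_0$ (which holds because $\QQ$ solves a martingale problem with the stated coefficients, so $t\mapsto \QQ\circ X_t^{-1}$ is weakly continuous and starts at $m_0$), so that $\Q = \bigcup_{\nu \text{ sat. }{\bf (B)}} \Q(\nu)$ as asserted in \eqref{Q curly}, and conversely the constraint $\QQ\circ X_t^{-1}=\nu_t$ in $\Q(\nu)$ forces $\EE^\QQ[\int f(t,X_t)\dd t] = \iint f\,\dd\nu_t\,\dd t$. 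A secondary technical point is ensuring the equality case in Lemma \ref{identificacionconjugadas ext} genuinely applies: $F = \int_0^T f(t,X_t)\,\dd t$ with $f$ bounded continuous is bounded and continuous on $\Omega = C([0,T];\RR^q)$, hence lower semicontinuous and bounded from below, so the hypothesis of that lemma is met and we get equality, not merely the inequality \eqref{eq ineq}.
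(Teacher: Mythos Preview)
Your proposal is correct and follows essentially the same route as the paper: unwind $P^*[f]$ into a sup over $\QQ\in\Q$ via $\Q=\bigcup_\nu\Q(\nu)$, then invoke the equality case of Lemma~\ref{identificacionconjugadas ext} for the continuous bounded $F=\int_0^T f(t,X_t)\dd t$, and finally identify the resulting expression with $D4[\nu]$ from Lemma~\ref{lem multiple}. If anything, you are slightly more precise than the paper in citing Lemma~\ref{identificacionconjugadas ext} (rather than Lemma~\ref{identificacionconjugadas}) for the last equality, since the sup ranges over all $\QQ$ and not only $\QQ\ll\PP$.
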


\begin{proof}
We start with \eqref{P*=-j}. To wit
\begin{align*}
P^*[f]&= \sup_{\nu\in C([0,T];\P(\RR^q)),\,\nu_0= m_0}\left \{ \iint f(t,x)\nu_t(\dd x)\dd t   -  P[\nu] \right\}\\
\\
&= \sup_{\substack{\nu\in C([0,T];\P(\RR^q)),\,\nu_0= m_0,\\ \QQ \in \Q(\nu)}} \EE^\QQ \left [ \int_0^T  f(t,X_t)  \dd t  - \int_0^T g^*(t,X_t,\sigma'(t,X_t)\beta^\QQ_t)\dd t  \right ]\\
&= \sup_{\substack{ \QQ}}\,\, \left\{\EE^\QQ \left [ \int_0^T  f(t,X_t)  \dd t  \right ]- \tilde{I}(\QQ) \right \}\\
&=\int E^g_0\left( \int_0^T f(t,X_t)\dd t\right)(x_0)\dd m_0(x_0),
\end{align*}
by Lemma \ref{identificacionconjugadas}. This shows that 
\begin{align*} P^{**}[\nu]& = \sup_{f \in\C}\left \{ \iint f(t,x)\nu_t(\dd x)\dd t  -\int E^g_0\left( \int_0^T f(t,X_t)\dd t\right)(x_0)\dd m_0(x_0) \right\},
\end{align*}
which yields \eqref{eq P** aux}, whereas \eqref{eq D=P**} follows by Lemma \ref{lem multiple}.
\end{proof}

\begin{proposition}\label{prop no d gap}
We have $P[\nu]= P^{**}[\nu]=D[\nu]$, i.e.\ there is no duality gap. 
\end{proposition}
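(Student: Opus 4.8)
The plan is to prove the chain $P[\nu] \geq P^{**}[\nu] = D[\nu] \geq P[\nu]$, thereby forcing equality throughout. The middle equality $P^{**}[\nu] = D[\nu]$ is already in hand from Lemma \ref{lem P*,D,P**} (combining \eqref{eq D=P**} and \eqref{eq P** aux}), so the real work is the two inequalities bracketing it.

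First, the inequality $P[\nu] \geq P^{**}[\nu]$ is simply weak biconjugation: $P^{**}$ is the Fenchel biconjugate of $P[\cdot]$ (as a functional on the space of weakly continuous flows $\nu$ with $\nu_0 = m_0$, paired with test functions $f \in \C$ via $\iint f\,\dd\nu_t\,\dd t$), and a function always dominates its biconjugate. One should be slightly careful that the biconjugate is taken with respect to the restricted class $\C$ of test functions rather than all of $C_b$, but since enlarging the test class only increases $P^*$ and hence decreases $P^{**}$, the direction $P[\nu] \geq P^{**}[\nu]$ still holds — indeed it is immediate from the definitions, since for every $f \in \C$ we have $\iint f\,\dd\nu_t\,\dd t - P^*[f] \leq P[\nu]$ by definition of $P^*$ as a supremum over flows including $\nu$ itself.

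The substantive half is the reverse inequality $P^{**}[\nu] \geq P[\nu]$, equivalently (via Lemma \ref{lem P*,D,P**}) $D[\nu] \geq P[\nu]$, which is the genuine no-gap statement; together with weak duality (Lemma \ref{lem weak duality}) it upgrades $\geq$ to $=$. The standard route is a minimax / Fenchel--Rockafellar argument: one wants to realize $P[\nu]$ as the value of a concave maximization over $f$ and invoke the absence of a gap in the underlying infinite-dimensional Fenchel duality. Concretely, I would use the dual representation \eqref{dual rep I} (or rather \eqref{eq ineq} of Lemma \ref{identificacionconjugadas ext}, which handles possibly-singular $\QQ$) to write $\tilde I(\QQ) = \sup_{F} \{\EE^\QQ[F] - \int E^g_0(F)\,\dd m_0\}$, restricting $F$ to the additive functionals $\int_0^T f(t,X_t)\,\dd t$ with $f \in \C$; then
\begin{align*}
P[\nu] &= \inf_{\QQ \in \Q(\nu)} \tilde I(\QQ) = \inf_{\QQ \in \Q(\nu)} \sup_{f \in \C}\Bigl\{ \EE^\QQ\Bigl[\textstyle\int_0^T f(t,X_t)\,\dd t\Bigr] - \int E^g_0\Bigl(\textstyle\int_0^T f(t,X_t)\,\dd t\Bigr)\,\dd m_0\Bigr\}.
\end{align*}
On $\Q(\nu)$ the marginal constraint makes $\EE^\QQ[\int_0^T f(t,X_t)\,\dd t] = \iint f\,\dd\nu_t\,\dd t$ a constant in $\QQ$, so the inner expression is $\iint f\,\dd\nu_t\,\dd t - \int E^g_0(\int f(t,X_t)\dd t)\,\dd m_0$, and if one could swap $\inf_\QQ$ and $\sup_f$ the result would be exactly $D4[\nu] = D[\nu]$ by Lemma \ref{lem multiple}(3). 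The rigorous way to do the swap — and the step I expect to be the main obstacle — is to instead handle the marginal constraint by Lagrangian relaxation: introduce the constraint $\QQ \circ X_t^{-1} = \nu_t$ via a penalization/dual variable ranging over test functions, appeal to inf-compactness of $\tilde I$ and tightness (Lemma \ref{lem folk}) to get existence and to justify a Sion-type minimax theorem (the objective is convex and inf-compact in $\QQ$ by Lemma \ref{lem folk}, affine in the dual variable), and read off that the value of the relaxed problem equals $P[\nu]$ exactly when $P[\nu] < \infty$ (no relaxation gap) while simultaneously equalling the dual value. The $P[\nu] = +\infty$ case is trivial since weak duality already gives $D[\nu] \leq P[\nu]$ and we only need $D[\nu] \geq P[\nu]$ when the primal is finite — actually when $P[\nu] = \infty$ there is nothing to prove for $P^{**}[\nu] \geq P[\nu]$ only if $P^{**}[\nu] = \infty$ too, which one gets from $P^{**} = D$ and a separate argument, or more cleanly by noting the whole proposition is vacuous-or-easy unless all three are finite. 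I would therefore structure the proof as: (i) dispatch weak duality and the biconjugation inequality; (ii) invoke Lemma \ref{lem P*,D,P**} for $P^{**} = D$; (iii) prove $D[\nu] \geq P[\nu]$ by the minimax argument above, citing Lemma \ref{lem folk} for the compactness needed to exchange $\inf$ and $\sup$, and Lemma \ref{identificacionconjugadas ext} / \ref{lem multiple} to identify the exchanged value with $D[\nu]$.
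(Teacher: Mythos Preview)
Your proposal takes a different route from the paper, and its key step has a genuine gap.

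Your first displayed computation does not do what you claim. Restricting $F$ in the dual representation of $\tilde I$ to additive functionals $\int_0^T f(t,X_t)\,\dd t$ with $f\in\C$ gives only $\tilde I(\QQ)\geq \sup_f\{\ldots\}$, and since that supremum is constant over $\QQ\in\Q(\nu)$ (equal to $D[\nu]$, as you observe), taking $\inf_{\QQ\in\Q(\nu)}$ on both sides recovers nothing beyond $P[\nu]\geq D[\nu]$ --- weak duality again.

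The Lagrangian reformulation is correct in that it reduces everything to the single identity $\inf_{\QQ}\sup_{f}\Lambda = \sup_{f}\inf_{\QQ}\Lambda$, where $\Lambda(\QQ,f):=\tilde I(\QQ)+\iint f\,\dd\nu_t\dd t-\EE^\QQ\bigl[\int_0^T f(t,X_t)\dd t\bigr]$. But ``a Sion-type minimax theorem'' does not apply without more: classical Sion needs one of the two sets to be compact, and neither $\C$ nor $\Q$ is. If you try to remedy this by restricting to $K_M:=\{\tilde I\leq M\}$ (compact by Lemma \ref{lem folk}) and applying Sion on $K_M\times\C$, you obtain $\inf_{K_M}\sup_f = \sup_f\inf_{K_M}$; the left side is $P[\nu]$ once $M>P[\nu]$, but the right side is only $\geq \sup_f\inf_{\Q}=D[\nu]$, because for each fixed $f$ the unconstrained minimizer $\QQ_f$ of $\Lambda(\cdot,f)$ satisfies only $\tilde I(\QQ_f)\leq P[\nu]+2T\|f\|_\infty$, a bound that grows with $f$. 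No single $M$ captures all the $\QQ_f$, and you are back to weak duality. A minimax theorem strong enough to cover this one-sided coercive, affine-in-$f$ situation is essentially equivalent to what you are trying to prove.

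The paper closes the gap differently, via Fenchel--Moreau. It shows directly that $\nu\mapsto P[\nu]$ is convex and lower-semicontinuous for the weak topology of pointwise convergence against $\C$: given a net $\nu^\alpha\to\nu$ in this sense with $P[\nu^\alpha]\leq k$, one takes optimizers $\QQ^\alpha\in\Q(\nu^\alpha)$ (Lemma \ref{lem tightness -}), extracts a weak accumulation point $\QQ$ using tightness (Lemma \ref{lem folk}), verifies via an approximation argument (mollifying time-indicators by functions in $\C$, then using weak continuity of $t\mapsto\nu_t$ to reach the endpoints $t\in\{0,T\}$) that $\QQ$ has marginals $\nu$, and concludes $P[\nu]\leq \tilde I(\QQ)\leq k$ from lower-semicontinuity of $\tilde I$. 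Since $P^{**}$ is by construction the largest convex lsc minorant of $P$, this forces $P=P^{**}$ everywhere --- including the case $P[\nu]=\infty$, which your treatment does not resolve. The ingredients you name (Lemmas \ref{lem folk} and \ref{lem tightness -}) are precisely those the paper uses, but they are deployed to establish lower-semicontinuity of the value function $P[\cdot]$, not to justify a direct $\inf$--$\sup$ exchange.
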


\begin{proof}
It is easy to see that $P[\nu]\geq P^{**}[\nu]$ and that $P[\cdot]$ is convex. In light of Lemma \ref{lem P*,D,P**}, to obtain no duality gap it suffices to prove $P[\nu]=P^{**}[\nu]$.  We now establish that $P[\cdot]$ is lower-semicontinuous in an appropriate sense. Let $\left\{\nu^\alpha\right\}$ be a net in $C([0,T];\P(\RR^q))$ for which $P[\nu^\alpha]\leq k$ and
\begin{equation}\label{eq point}\forall f\in\C:\,\, \iint f(t,x)\nu^\alpha_t(\dd x)\dd t \to \iint f(t,x)\nu_t(\dd x)\dd t ,\end{equation}
for some $\nu\in C([0,T];\P(\RR^q))$; we may assume all this functions start at $m_0$ at time zero. By Lemma \ref{lem tightness -} we have that $P[\nu^\alpha]=\tilde{I}(\QQ^\alpha)\leq k$ for unique probability measures $\QQ^\alpha\in\Q(\nu^\alpha)$. By Lemma \ref{lem folk} the family $\{\QQ^\alpha\}$ is tight. Let $\QQ$ be any accumulation point. For ease of notation we still index the subnet accumulating into $\QQ$ by the same indices. By the lower semicontinuity of $\tilde{I}$ given in Lemma \ref{lem folk}, we obtain $\tilde{I}(\QQ)\leq k$. On the other hand, for each $f\in\C$ we have 
\begin{align*}\EE^\QQ\left[ \int_0^Tf(t,X_t)\dd t\right]=&\lim \EE^{\QQ^\alpha}\left[\int_0^Tf(t,X_t)\dd t\right] \\ = & \lim  \iint f(t,x)\nu^\alpha_t(\dd x)\dd t \\ = & \iint f(t,x)\nu_t(\dd x)\dd t .\end{align*} 
Now take $F$ a smooth function on $\RR^q$ with bounded support, $\bar{t}\in (0,T)$ and $m^n$ a sequence of smooth functions of time converging monotonically (hence uniformly) to ${\bf 1}_{(\bar{t},\bar{t}+\epsilon)}$. Take $f(t,x)=m^n(t)F(x)$. By monotone convergence and the above equality, we get
 $$\EE^{\QQ}\left[\frac{1}{\epsilon}\int_{\bar{t}}^{\bar{t}+\epsilon}F(X_t)\dd t\right]=\frac{1}{\epsilon}\int_{\bar{t}}^{\bar{t}+\epsilon}\int_{\RR^q} F(x)\nu_t(\dd x) \dd t.$$
 By dominated convergence we get as $\epsilon\to 0$ that
  $$\EE^\QQ[F(X_{\bar{t}})]=\int F(x)\nu_{\bar{t}}(\dd x),$$
  since $t\mapsto \nu_t$ is weakly continuous. This identity must also hold for $F$ continuous bounded by further approximation arguments. The limiting cases of $\bar{t}\in\{0,T\}$ follow taking limits, as $t\mapsto \nu_t$ is weakly continuous. Therefore $\QQ$ has $\nu$ as its marginal flow. Since $\tilde{I}(\QQ)<\infty$ we conclude that $\QQ\in\Q(\nu)$, therefore is feasible for $(P[\nu])$, and we deduce $P[\nu]\leq k$ as desired.

  Wrapping up, we obtained that $P[\cdot]$ is convex and lower semicontinuous w.r.t.\ pointwise convergence on $\C$ (i.e.\ in the sense of \eqref{eq point}). By construction $P^{**}[\cdot]$ is the greatest minorant of $P[\cdot]$ having these properties, so we conclude $P^{**}[\cdot]=P[\cdot]$. 
\end{proof}

\section{Proof of the main result}\label{proof main result}

The following is a crucial result for this part:
 
 \begin{proposition}\label{prop equiv}
 value$\eqref{eq dual original}<\infty$ is equivalent to the existence of some $\Psi\in \L^g_\nabla$ such that for all $w\in \C$: 
 \begin{equation} \iint [ \L_t w(t,x)- \nabla g(t,x,\sigma'(t,x){\Psi}(t,x))'\sigma'(t,x)\nabla w(t,x)]\mu_t(\dd x) \dd t\,\, =\,\,0
 \label{eq FP}
 \end{equation}
 When this holds, then $\Psi$ is an optimizer for \eqref{eq dual extendido} and
\begin{equation}
{value}\eqref{eq dual original} = \mbox{value}\eqref{eq dual extendido} = \iint g^*\big (t, x, \nabla g(t ,x,\sigma'(t,x)\Psi(t,x) )\big)    \mu_t(\dd x) \dd t. \label{eq energy}
\end{equation}
This $\Psi$ is $\mu_t(\dd x)\dd t$-a.s.\ unique, 
%
%
and we further have that $$(t,x)\mapsto(\sigma')^{-1}(t,x)\nabla g(t,x,\sigma'(t,x)\Psi(t,x))$$ is the unique (up to equivalence class) representative of $L$ (cf.\ Lemma \ref{lem representation}).
 \end{proposition}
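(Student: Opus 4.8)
The plan is to prove the stated equivalence in both directions and then read off the remaining assertions from the construction. First I would dispatch the easy implication: suppose some $\Psi\in L^g_\nabla$ satisfies \eqref{eq FP}. Substituting the identity $\iint\L_t w\,\mu_t(\dd x)\dd t=\iint\nabla g(t,x,\sigma'(t,x)\Psi(t,x))'\sigma'(t,x)\nabla w(t,x)\,\mu_t(\dd x)\dd t$ into the functional defining \eqref{eq dual original} and applying the Fenchel--Young inequality pointwise, $\langle\nabla g(t,x,\sigma'\Psi),\sigma'\nabla w\rangle-g(t,x,\sigma'\nabla w)\le g^*(t,x,\nabla g(t,x,\sigma'\Psi))$, gives $\text{value}\eqref{eq dual original}\le\iint g^*(t,x,\nabla g(t,x,\sigma'(t,x)\Psi(t,x)))\,\mu_t(\dd x)\dd t$, and the right-hand side is finite by (the proof of) Lemma \ref{lem integrabilidad}. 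This also records one of the two inequalities needed for \eqref{eq energy}.

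For the converse, assume $\text{value}\eqref{eq dual original}<\infty$; then $\text{value}\eqref{eq dual extendido}<\infty$ too (Lemma \ref{lem dual ext}) and the functional $L$ of Lemma \ref{lem extension functional} is defined and $\|\cdot\|_g$-continuous on $L^g_\nabla$. I would then run the direct method on $\Phi(\psi):=L(\psi)-G(\psi)$ over $L^g_\nabla$: this is a closed subspace of the reflexive Banach space $L^g$ (Lemma \ref{lem Kozek}), hence reflexive; $G$ is convex and norm-continuous (Lemma \ref{lem dual ext}), hence weakly lower semicontinuous; and \eqref{eq coercivity} together with $\Phi(\psi)\le\|L\|\,\|\psi\|_g-G(\psi)$ forces $\Phi(\psi)\to-\infty$ as $\|\psi\|_g\to\infty$. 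Thus the superlevel sets of $\Phi$ are weakly compact, $\Phi$ attains its maximum at some $\Psi\in L^g_\nabla$, and by Lemma \ref{lem dual ext} this $\Psi$ optimizes \eqref{eq dual extendido}. To obtain \eqref{eq FP}, fix $w\in\C$: the map $\epsilon\mapsto\Phi(\Psi+\epsilon\nabla w)$ is well defined near $\epsilon=0$ (as $L^g_\nabla$ is a subspace containing $\nabla w$), has a maximum at $\epsilon=0$, and is differentiable there because $G$ is Gâteaux differentiable in direction $\nabla w$ with derivative given by \eqref{eq gateaux}; hence its derivative vanishes, which together with $L(\nabla w)=\iint\L_t w\,\mu_t(\dd x)\dd t$ is precisely \eqref{eq FP}.

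Next I would establish \eqref{eq energy} and, simultaneously, that any $\Psi$ solving \eqref{eq FP} is the optimizer. By Lemma \ref{lem integrabilidad} the field $\beta:=(\sigma')^{-1}\nabla g(t,x,\sigma'(t,x)\Psi(t,x))$ lies in $L^{g^*}$, so $\psi\mapsto\iint\beta'a\psi\,\mu_t(\dd x)\dd t=\iint\nabla g(t,x,\sigma'\Psi)'\sigma'\psi\,\mu_t(\dd x)\dd t$ is a $\|\cdot\|_g$-continuous functional on $L^g$; since \eqref{eq FP} says it agrees with $L$ on the dense set $\{\nabla w:w\in\C\}$, the two agree on all of $L^g_\nabla$, and in particular $L(\Psi)=\iint\langle\nabla g(t,x,\sigma'\Psi),\sigma'\Psi\rangle\,\mu_t(\dd x)\dd t$. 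Feeding this into $\Phi(\Psi)$ and using the Fenchel equality $\langle\nabla g(t,x,y),y\rangle=g(t,x,y)+g^*(t,x,\nabla g(t,x,y))$ (valid since $g=(g^*)^*$ with $g^*$ differentiable and strictly convex) yields $\Phi(\Psi)=\iint g^*(t,x,\nabla g(t,x,\sigma'\Psi))\,\mu_t(\dd x)\dd t$; combined with the reverse inequality from the first paragraph and Lemma \ref{lem dual ext}, all three quantities coincide, proving \eqref{eq energy} and that $\Psi$ attains \eqref{eq dual extendido}.

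Finally, uniqueness and the representative of $L$: since $\sigma$ is invertible the map $\psi\mapsto\sigma'\psi$ is injective and $g(t,x,\cdot)$ is strictly convex, so $G$ is strictly convex on $L^g_\nabla$ and $\Phi=L-G$ is strictly concave, hence has at most one maximizer; as every solution of \eqref{eq FP} is such a maximizer, \eqref{eq FP} admits a $\mu_t(\dd x)\dd t$-a.s.\ unique solution $\Psi$. For the last claim, with $\beta$ as above one has $\beta'a\psi=\beta'\sigma\sigma'\psi=\nabla g(t,x,\sigma'\Psi)'\sigma'\psi$, so by the previous paragraph $L\psi=\iint\beta'a\psi\,\mu_t(\dd x)\dd t$ for all $\psi\in L^g_\nabla$, and the uniqueness of the equivalence class in Lemma \ref{lem representation} identifies $(\sigma')^{-1}\nabla g(t,x,\sigma'(t,x)\Psi(t,x))$ as the representative of $L$. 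The main obstacle is the converse direction: producing the maximizer by the direct method, which is exactly where the Orlicz-type structure of $L^g$ matters (reflexivity from Lemma \ref{lem Kozek} and coercivity \eqref{eq coercivity}), and then the density/continuity step that upgrades \eqref{eq FP} from test gradients to all of $L^g_\nabla$ — this is what legitimizes substituting $\psi=\Psi$ in the energy computation and is the one point where Lemma \ref{lem integrabilidad} is genuinely indispensable.
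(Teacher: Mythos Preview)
Your proposal is correct and follows essentially the same route as the paper: existence of the maximizer via the direct method (reflexivity of $L^g_\nabla$ from Lemma~\ref{lem Kozek}, coercivity \eqref{eq coercivity}, weak upper semicontinuity), the first-order condition via the G\^ateaux derivative \eqref{eq gateaux}, extension of $L$ to all of $L^g_\nabla$ through Lemma~\ref{lem integrabilidad}, the Fenchel identity for the energy formula, and strict convexity of $g$ for uniqueness. The only cosmetic difference is that you treat the ``easy'' implication first and bound $\text{value}\eqref{eq dual original}$ directly by Fenchel--Young, whereas the paper handles the hard direction first and in the converse step redefines $L$ via the integral against $\nabla g(\cdot,\cdot,\sigma'\Psi)$ before bounding the extended dual; your ordering is marginally cleaner but not substantively different.
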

 
 \begin{proof} First we assume value$\eqref{eq dual original}<\infty$.  Clearly \eqref{eq mod continuity} implies that $L(\psi)\leq \{1+\mbox{value}\eqref{eq dual original}\}\,\|\psi\|_g$. We thus find
\begin{align*} L(\phi) -\iint g(t,x,\sigma'(t,x)\psi(t,x)),\mu_t(\dd x) \dd t  \leq  \|\psi\|_g\left [   1+\mbox{value}\eqref{eq dual original} - \frac{G(\psi)}{\|\psi\|_g}  \right ],
\end{align*}
in the notation of Lemma \ref{lem dual ext}. Using \eqref{eq coercivity} we find that the l.h.s.\ goes to $-\infty$ if we let $\|\psi\|_g\to \infty$. We deduce that computing \eqref{eq dual extendido} can be done over a fixed ball in $L_\nabla^g$. But $L^g$ is reflexive by Lemma \ref{lem Kozek}, so balls in $L_\nabla^g$ are weakly compact. The objective function of the extended dual problem being concave continuous (see Lemmata \ref{lem dual ext},\ref{lem extension functional}), it is also weakly upper semi-continuous. We conclude the existence of an optimizer for the extended dual problem. Let $\Psi$ denote any optimizer and $\nabla w$ any ``direction''. The optimality of $\Psi$ easily yields
$$\iint \L_t w(t,x)\mu_t(\dd x) \dd t - DG(\Psi)(\nabla w)=0.$$ 
Thanks to \eqref{eq gateaux} this proves \eqref{eq FP}, which further implies for all $\psi\in L^g_\nabla$: 
\begin{equation}
L(\psi)=\iint \nabla g(t,x,\sigma'(t,x){\Psi}(t,x))'\sigma'(t,x)\psi(t,x)\mu_t(\dd x) \dd t. \label{eq FOC general}
\end{equation}

For the converse direction, we observe that \eqref{eq FP} combined with Lemma \ref{lem integrabilidad}, allows to perform the continuous extension $L$ of Lemma \ref{lem extension functional}. Thus one can define the extended dual problem anew. By \eqref{eq FP} and continuity, the extended dual becomes
$$\sup_{\psi\in L_\nabla^g} \iint [\nabla g(t,x,\sigma'(t,x){\Psi}(t,x))'\sigma'(t,x)\psi(t,x) - g(t,x,\sigma'(t,x)\psi(t,x))]\mu_t(\dd x) \dd t ,$$
which is bounded above by the r.h.s.\ of \eqref{eq energy} by convex conjugacy. This bound is finite by Lemma \ref{lem integrabilidad}, so a fortriori the non-extened dual problem is finite as desired. 

For \eqref{eq energy}, substitute \eqref{eq FOC general} into the extended dual (evaluated at $\Psi$), obtaining
$$\mbox{value}\eqref{eq dual extendido} = \iint [\nabla g(t,x,\sigma'(t,x){\Psi}(t,x))'\sigma'(t,x)\Psi(t,x) - g(t,x,\sigma'(t,x)\Psi(t,x))]\mu_t(\dd x) \dd t,$$ 
which in effect yields \eqref{eq energy} due to the conjugacy relationship. The remark on uniqueness of $\Psi$ follows from the differentiability of $g^*$, which implies the strict convexity of $g$. The last statement follows by \eqref{eq FP} (equiv.\ \eqref{eq FOC general}), which implies that the given element does represent $L$ acting on $L_\nabla^g$, and Lemma \ref{lem representation}, implying uniqueness of such representative up to equivalence class.
%
%
 \end{proof}

%
%

\quad

We can now prove the main structural result of the article.
\medskip

\begin{proof}[Proof of Theorem \ref{Thm Main}]
Absence of duality gap was obtained in Proposition \ref{prop no d gap}. From now on we assume $\mbox{value}\eqref{eqprimal extendido}<\infty$. The existence of a unique optimal $\Psi$ is given by Proposition \ref{prop equiv}.
The existence of a (unique) primal optimizer $\QQ$ was established in Lemma \ref{lem tightness -}. We proceed to show that this $\QQ$ must have the desired property.

Since $\QQ\in\Q(\mu)$, we have for some drift $\beta$:
$$\EE^\QQ\left [ \int_0^T (\L_t+\beta_t 'a\nabla)w(t,X_t)\dd t \right ]=0,\,\,\forall w\in\C.$$
%

Let $\bar{\beta}(t,x)=\EE^\QQ[\beta_t|X_t=x]$,
%
%
so that obviously
\begin{equation}\label{eq unicidad}
-\iint \L_t w(t,x)\mu_t(\dd x)\dd t=\iint  \bar{\beta}(t,x)' a(t,x)\nabla w(t,x)\mu_t(\dd x)\dd t,\,\,\forall w\in\C.
\end{equation}
Plugging in this representation of the l.h.s.\ into the dual problem, and using the Young-Fenchel inequality we obtain
\begin{equation}\label{eq bound dual}
\mbox{value}\eqref{eq dual original} \leq \iint g^*(t,x,\sigma'\bar{\beta}(t,x))\mu_t(\dd x) \dd t .
\end{equation}
By Jensen's inequality, the fact that $\QQ$ has marginals $\{\mu_t\}_t$, the above equation and \eqref{eq energy}, we deduce
\begin{align}
\mbox{value}\eqref{eqprimal extendido} = & \EE^{\QQ}\left[\int_0^T g^*(t,X_t,\sigma'(t,X_t)\beta_t)\dd t\right] \notag \\ \geq & \EE^{\QQ}\left[\int_0^T g^*(t,X_t,\sigma'(t,X_t)\bar{\beta}(t,X_t))\dd t\right] \notag \\ = &  \iint g^*(t,x,\sigma'\bar{\beta}(t,x))\mu_t(\dd x) \dd t \notag  \\ \geq & \mbox{value}\eqref{eq dual original} \notag  \\ =&
\iint g^*(t, x,\, \nabla g(t ,x,\sigma'(t,x)\Psi(t,x) )\,)   \mu_t(\dd x) \dd t. \label{eq cadena crucial}
\end{align}
By no duality gap, the above inequalities are actual equalities. Since $g^*$ is stricly convex, this shows that 
\begin{equation}\label{eq final 1}
\QQ\times \dd t-a.s.\,\,\, \beta_t(X) = \bar{\beta}(t,X_t). 
\end{equation}
On the other hand \eqref{eq FP} with \eqref{eq cadena crucial} show that the problem
$$\inf\limits_{\substack{k(\cdot,\cdot) \text{ s.t. }\forall w\in \C :\\  \iint  \L_t w(t,x)\mu_t(\dd x)\dd t=\iint  k(t,x)' a(t,x)\nabla w(t,x)\mu_t(\dd x)\dd t}} \iint g^*(t,x,\sigma'k(t,x))\mu_t(\dd x) \dd t,$$
has $-\bar{\beta}(\cdot,\cdot)$ and $ (\sigma')^{-1}(\cdot,\cdot)\nabla g(\cdot ,\cdot,\sigma'(\cdot,\cdot)\Psi(\cdot,\cdot) )$ as feasible elements, where the latter is optimal. Indeed, \eqref{eq bound dual} holds also for any $k(\cdot,\cdot)$ participating in the infimum above. Again by strict convexity of $g^*$ and the equality in \eqref{eq cadena crucial} we find that
\begin{equation}\label{eq final 2}
\mu_t(\dd x)\times\dd t-a.s.\,\,\, \bar{\beta}(t,x)=- (\sigma')^{-1}(t,x)\nabla g(t ,x,\sigma'(t,x)\Psi(t,x) ).
\end{equation}
Calling $\Lambda\subset \RR^q\times [0,T]$ the set on which \eqref{eq final 2} fails, we have 
$$\textstyle 0=\int\int {\bf 1}_\Lambda(t,x)\mu_t(\dd x)\dd t = \int_0^T \EE^\QQ[  {\bf 1}_\Lambda(t,X_t) ]\dd t,$$
showing that
\begin{equation}\label{eq final 3}
\textstyle
\QQ\times \dd t-a.s.\,\,\, \bar{\beta}(t,X_t)=- (\sigma')^{-1}(t,X_t) \nabla g(t ,X_t,\sigma'(t,X_t)\Psi(t,X_t) ).
\end{equation}
Putting \eqref{eq final 1} and \eqref{eq final 3} together, we find \eqref{formula crucial}. From this \eqref{formula crucial abs} is also clear.
\end{proof}

\section{Proofs of the main corollaries}
\label{sec: ex}

We prove here Corollaries \ref{coro mark} and \ref{coro univ}. Most of the effort is devoted to the construction of counterexamples. \\

\begin{proof}[Proof of Corollary \ref{coro mark}]
We show in Section \ref{sec ex mark} below an example of an optimizer without the Markov property. Let us now assume the sufficient condition in the statement, so we have $$\frac{\dd \QQ}{\dd \PP}:= \,\E\left ( -\int \nabla g(t,X_t,a(t,X_t){\Psi}(t,X_t))'\dd M_t   \right )_T.$$
The argument is now as in  \cite[Theorem 12]{Zheng_tightness}. Let us call $Z_t $ the associated density process, which is a true $\PP$-martingale. Let $s\leq t$, $F$ be an $\F_s$-measurable bounded function and $f:\RR^q\to\RR$ Borel bounded. Then
\begin{align*}
\EE^\QQ[F\, f(X_t)] &= \EE^{\PP}\left[F Z_s \, \E\left( -\int_s^t \nabla g(t,X_t,a(t,X_t){\Psi}(t,X_t))'\dd M_t \right) f(X_t)\right] \\ &= \EE^{\PP}\left[F Z_s \, \EE^{\PP}\left[\, \E\left( -\int_s^t \nabla g(t,X_t,a(t,X_t){\Psi}(t,X_t))'\dd M_t \right) f(X_t)\,  \Bigl | \Bigr . \F_s\,\right ]\,\right]
\\ &= \EE^{\QQ}\left[F \, \EE^{\PP}\left[\, \E\left( -\int_s^t \nabla g(t,X_t,a(t,X_t){\Psi}(t,X_t))'\dd M_t \right) f(X_t)\,  \Bigl | \Bigr . X_s\,\right ]\,\right].
\end{align*}
The last equality by the Markov property under $\PP$ and the fact that nothing in the stochastic exponential there depends on $\{X_r:r\leq s\}$. This finishes the proof.
\end{proof}\\

\begin{proof}[Proof of Corollary \ref{coro univ}]
The assertion in one-dimension is fully analogous to \cite[Proposition 5.2 and Remark 5.5]{CattLeo_AIHP}. Indeed, there is actually at most one Markovian measure with the given marginals and with an integrable drift. For higher dimensions, see the example in Section \ref{sec ex univ} below.
\end{proof}

\subsection{Non-Markovian optimal solution}\label{sec ex mark}

The (counter)example is based on the $\text{Bessel}(\delta)$ process, with dimension parameter $1<\delta<2$, equiv.\ index $\nu=\delta/2-1\in(-1/2,0)$; see \cite[Appendix I.21]{BoSa02}. From the expression of the probability density function $p^\nu$ of this process, and the asymptotics of Bessel functions, we have that
$$p^\nu_t(x,y)y^{-2\nu-1}$$
is bounded away from zero and infinity, for each $t>0$
 fixed and $y$ in a neighbourhood of the origin. Therefore 
$$\int y^{-p}p_t^\nu(x,y)\dd y<\infty,$$
 as soon as $1<p<{2\nu+2}$. Denoting by $X$ the Bessel process described, it is an easy consequence of scaling and the finite integral above, that
 $$\EE^{x_0}\left[ \int_0^1 \frac{1}{|X_t|^p}\dd t \right]<\infty.$$
 We recall that $X$ started at $x_0>0$ satisfies $$dX_t = \frac{\delta-1}{2 X_t}\dd t+\dd W_t, \,\,X_0=x_0,$$
 and is in fact the unique positive solution of this SDE. Actually, the origin is instantaneously reflected by this process. Denote $\tau$ the first time that $X$ touches the origin. We now construct a second process, as in \cite[Example 3.10]{ChEn05}, by
 $$Y_t=X_t \,\,\text{if}\,\,t\leq \tau,$$
 and $Y_t= \text{sign}(X_{\tau/2}-1)X_t$, for $t>\tau$. One can see that $Y$ is a weak solution of the same SDE as $X$, and has the same finite moment
  $$\EE\left[ \int_0^1 \frac{1}{|Y_t|^p}\dd t \right]<\infty.$$
  On the other hand $Y$ is clearly non-Markovian. Denoting $\mu_t:=\text{Law}(Y_t)$, and taking $\PP$ the Wiener measure, we have
  
  \begin{lemma}
  $\text{Law}(Y)$ is the unique optimizer of our primal problem for the cost $g^*(t,x,b)=|b|^p/p$ and the marginals $\{\mu_t\}_t$, with finite optimal cost if $1<p<{2\nu+2}$. In particular, solutions to our primal problem can fail to have the Markov property even if the value of the problem is finite.
  \end{lemma}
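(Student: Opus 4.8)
The plan is to invoke Theorem \ref{Thm Main} by producing an optimal $\Psi\in L^g_\nabla$ for the dual problem and checking that the corresponding primal optimizer given by \eqref{formula crucial} is exactly $\mathrm{Law}(Y)$. The natural candidate for the dual optimizer is the one that reproduces the drift of $Y$: we want the drift in \eqref{formula crucial}, namely $-\sigma\nabla g(t,x,\sigma'\Psi)$ with $\sigma\equiv 1$ and $g(t,x,\cdot)=g^*(t,x,\cdot)^{*}$ where $g^*(t,x,b)=|b|^p/p$, to coincide with $\tfrac{\delta-1}{2x}$. Since here $g(y)=|y|^{q}/q$ with $q$ the conjugate exponent, $\nabla g(y)=|y|^{q-2}y$ is a bijection, so this determines $\Psi(t,x)$ up to the explicit algebraic inversion; one then checks $\Psi\in L^g_\nabla$, which amounts to the finiteness of $\iint |\Psi(t,x)|^{q'}\mu_t(\mathrm dx)\,\mathrm dt$ for the relevant exponents, and this is precisely where the moment bound $\EE[\int_0^1 |Y_t|^{-p}\,\mathrm dt]<\infty$ enters. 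First I would record that, because $\mathrm{Law}(Y)$ solves the martingale problem for $(b,a)=(\tfrac{\delta-1}{2x},1)$ relative to Wiener measure with the prescribed drift $\beta$, its cost $\tilde I(\mathrm{Law}(Y))$ equals $\EE[\int_0^1 \tfrac1p|\tfrac{\delta-1}{2Y_t}|^p\,\mathrm dt]$, which is finite exactly when $1<p<2\nu+2$ by the Bessel moment computation; hence the primal value is finite and Lemma \ref{lem tightness -} gives a unique primal optimizer.

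The remaining step is to verify that $\mathrm{Law}(Y)$ \emph{is} that optimizer, equivalently that the drift $\tfrac{\delta-1}{2x}$ is of the gradient-type form in \eqref{formula crucial} for some $\Psi\in L^g_\nabla$. In one space dimension the constraint set in the variational problem appearing in the proof of Theorem \ref{Thm Main} (the infimum over $k$ with $\iint \L_t w\,\mu_t\,\mathrm dt = \iint k\,a\nabla w\,\mu_t\,\mathrm dt$) is governed by the Fokker--Planck equation for $\mu$, and since $\mathrm{Law}(Y)$ and $\mathrm{Law}(X)$ have the same marginals $\mu_t$ and $\mathrm{Law}(X)$ is Markov, one can take the feasible $k$ to be the actual drift correction $\bar\beta(t,x)=\tfrac{\delta-1}{2x}-b_{\PP}(t,x)$; but here $b_\PP\equiv 0$ since $\PP$ is Wiener measure, so $\bar\beta(t,x)=\tfrac{\delta-1}{2x}$ and this is manifestly a gradient field ($\tfrac{\delta-1}{2x}=\nabla_x[\tfrac{\delta-1}{2}\log x]$, formally). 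I would then set $\Psi(t,x):=(\sigma')^{-1}\big(\nabla g\big)^{-1}\big(-\sigma\bar\beta(t,x)\big)$, i.e. solve $\nabla g(\Psi)=-\bar\beta$ pointwise, and confirm $\Psi\in L^g_\nabla$: the defining requirement is $\iint g(t,x,\alpha\Psi)\,\mu_t\,\mathrm dt<\infty$ for all $\alpha$, and since $g(\Psi)\asymp |\bar\beta|^{p}\asymp |x|^{-p}$ this is again the Bessel moment bound. Membership in the \emph{closure} of gradients of compactly supported smooth functions, rather than merely being an $L^g$ gradient field, can be obtained by the density arguments of Lemma \ref{lem multiple}(1) combined with a truncation of $\log|x|$ near $0$ and $\infty$ justified by dominated convergence against the integrable bound $|x|^{-p}\mu_t(\mathrm dx)\,\mathrm dt$.

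With $\Psi\in L^g_\nabla$ in hand satisfying \eqref{eq FP} (which is just the Fokker--Planck identity \eqref{eq unicidad} for $\mu=\mathrm{Law}(Y)_t$, rewritten via $\nabla g(\Psi)=-\bar\beta$), Proposition \ref{prop equiv} yields that $\mathrm{value}\eqref{eq dual original}$ is finite and equals $\iint g^*(t,x,\nabla g(\Psi))\,\mu_t\,\mathrm dt = \iint \tfrac1p|\bar\beta|^p\,\mu_t\,\mathrm dt = \tilde I(\mathrm{Law}(Y))$; by the no-duality-gap statement of Theorem \ref{Thm Main} this common value is the primal value, so $\mathrm{Law}(Y)$ attains it and, by uniqueness in Lemma \ref{lem tightness -}, is \emph{the} optimizer. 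Since $Y$ was constructed (following \cite{ChEn05}) to be non-Markovian while solving the same SDE as the Markov process $X$, this exhibits a finite-value instance of \eqref{eqprimal extendido} with non-Markovian optimal solution. The main obstacle I anticipate is the regularity/integrability bookkeeping around the singularity at the origin: one must make sure that $\Psi$ genuinely lies in $L^g_\nabla$ (not just that the energy integral converges) and that the Fokker--Planck identity \eqref{eq FP} holds against \emph{all} test functions in $\C$ despite the drift blowing up at $0$ — here the key facts are that $X$ (hence $Y$) spends Lebesgue-null time at the origin and that $|x|^{-p}$ is $\mu_t(\mathrm dx)\,\mathrm dt$-integrable, which together let one pass the necessary limits.
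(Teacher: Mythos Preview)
Your approach is essentially the paper's: guess the dual optimizer $\Psi$ by inverting $\nabla g(\Psi)=-\bar\beta$ with $\bar\beta(x)=\tfrac{\delta-1}{2x}$, verify $\Psi\in L^g_\nabla$, and invoke Proposition~\ref{prop equiv}/Theorem~\ref{Thm Main}. The paper writes this down in two lines, giving the explicit formula
\[
\Psi(t,x)=-\Bigl(\tfrac{\delta-1}{2}|x|^{-1}\Bigr)^{\frac{1}{q-1}}\mathrm{sign}(x),
\]
and then exhibits concrete approximants $w_n(x)=-\tfrac{\delta-1}{2}\cdot\tfrac{q-1}{q-2}\bigl(\tfrac{2}{\delta-1}|x|+n^{-1}\bigr)^{\frac{q-2}{q-1}}$ whose derivatives converge to $\Psi$ in $L^q(\mu_t\,\dd t)$ by dominated convergence.

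One slip worth flagging: when you argue membership in $L^g_\nabla$ you speak of ``truncation of $\log|x|$'', but $\log|x|$ is the antiderivative of $\bar\beta$, not of $\Psi$. It is $\Psi$, not $\bar\beta$, that must lie in the $L^g$-closure of gradients, and the primitive of $\Psi$ is proportional to $|x|^{(q-2)/(q-1)}$ --- exactly the paper's $w_n$ before regularization. In one dimension this is harmless (any $L^g$ function is locally a derivative, so the issue is only regularizing the singularity at $0$ and cutting off at infinity), but your sentence as written would produce the wrong approximating sequence. With that correction your argument and the paper's coincide.
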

  \begin{proof} We have $g(z)=|z|^q/q$ with $q$ the H\"older conjugate of $p$.
  By the first order conditions of the dual problem problem, and the fact that $\nabla g(z)=\text{sign}(z)|z|^{q-1}$, it is easy to guess that 
  $$\Psi(t,x)=-\left( \frac{\delta-1}{2}|x|^{-1}\right )^{\frac{1}{q-1}}\text{sign}(x),$$
  is the dual optimizer. Indeed, to see that $\Psi$ is an $L^q(\dd\mu_t\dd t)$-limit of gradients, we just consider $w_n(x)=-\frac{\delta-1}{2}\times\frac{q-1}{q-2}\left(\frac{2}{\delta-1}|x|+n^{-1}\right)^{\frac{q-2}{q-1}}$, take gradients, and use dominated convergence.
  \end{proof}

\subsection{Non-universality of optimal solution}\label{sec ex univ}

We shall see that the optimizer can depend on the cost criterion.
%
%
Let $q=2$. For simplicity we shall consider a ``stationary'' case. We do so only to spare the reader with the heavier computations needed for the ``non-stationary'' analogue argument. The cost to pay is that the marginal distributions ($\mu_t$) must be $\sigma-$finite measures.

Let $B:\RR^2\to \RR$ be twice differentiable with bounded support. We take $$dX_t = \nabla B(X_t)\dd t + \dd W_t,$$
with initial condition $X_0$ distributed like two-dimensional Lebesgue measure, that is $\mbox{Law}(X_0) = \lambda^2$. We denote by $\PP$ the law of the unique strong solution of this SDE. We denote by $\QQ^{ent}$ th law of stationary (i.e.\ reversible) Brownian motion, that is Brownian motion with initial (and stationary) distribution $ \lambda^2$. Let us take $\mu_t = \lambda^2$ for all $t$, so the $t$-marginals of $\QQ^{ent}$ are precisely $\mu_t$. It is easy to see that
$$d\QQ^{ent}/d\PP = \exp\left\{ -\int_0^T \nabla B(X_t)'\dd W_t -\frac{1}{2}\int_0^T|\nabla B(X_t)|^2 \dd t \right \},$$
and that $\QQ^{ent}$ is optimal for the entropy minimization (primal) problem
$$\inf \left\{ \EE^\QQ\left[ \int_0^T \|\beta^\QQ\|^2\dd t \right]:\, d\QQ/d\PP=\E\left(-\int \beta' \dd W\right)_T,\, \QQ\circ X_t^{-1}=\lambda^2\,\, \mbox{ for all }t  \right \}.$$
Indeed, taking $\beta^B(t,X):=\nabla B(X_t)$ ensures producing the correct marginals, provides finite entropy, and has to be an optimal choice being a gradient (for instance by first order conditions, or see previous sections with $g$ quadratic). 

We now claim that different cost criteria than the above quadratic one may yield different optimizers. Consider
$$\inf \left\{ \EE^\QQ\left[ \int_0^T \|\beta^\QQ\|^{3}\dd t \right]:\, \dd \QQ/\dd \PP=\E\left(-\int \beta '\dd W\right)_T,\, \QQ\circ X_t^{-1}=\lambda^2\,\, \mbox{ for all }t  \right \}.$$
Observe that the power cost $g^*(\cdot):= (\cdot)^{3}$ satisfies our assumptions and that $\QQ^{ent}$ is feasible and produces a finite value for this cost criterion. We also have $g(\cdot)= \frac{2}{3}(\cdot)^{3/2}$. The optimizer for this problem has the structure
$$\frac{\dd \bar{\QQ}}{\dd \PP}=\E\left ( -\int\nabla g(\Psi(t,X_t))'\dd W_t \right )_T =  \E\left ( -\int\frac{\Psi(t,X_t)'}{\sqrt{\|\Psi(t,X_t)\|}}\dd W_t \right )_T ,$$
for $\Psi$ a solution to the dual problem, and so a limit of gradients. We want to give conditions so that $\QQ^{ent}\neq \bar{\QQ}$. For the sake of the argument let us assume now that $\Psi =\nabla w$ for $w$ suitable smooth. So we want to ensure the \emph{impossibility} of $$\nabla B = \frac{\nabla w(t,X_t)}{\sqrt{|\nabla w(t,X_t)|}}.$$  Taking norms on both sides we get $\|\nabla B\|=\sqrt{\|\nabla w\|}$, so we explore instead 
\begin{align}
\|\nabla B\|\nabla B = \nabla w. \label{eq B w}
\end{align}
The argument is simple now. For the r.h.s.\ we know, no matter who $w$ may be, that $$\partial_y\partial _x w = \partial_y(\mbox{1st coordinate of the r.h.s.})= \partial_x\partial_y w = \partial_x(\mbox{2nd coordinate of the r.h.s.}).$$ But by \eqref{eq B w} one computes that this is possible only if $$\partial_xB[\partial_yB\partial^2_{yy} B + \partial_xB \partial^2_{xy}B ]= \partial_yB[\partial_xB\partial^2_{xx} B + \partial_yB \partial^2_{xy}B ] .$$ So choosing $B$ such that this does not occur (for instance take $B(x,y)=p(x)q(y)$ with $p,q$ non-trivial, smooth and with bounded support) we see that there is no smooth  $w$ for which \eqref{eq B w} may hold. The general case with $\Psi$ is similar, by integration by parts and from the fact that $\Psi$ is a limit of actual gradients. In such case, no matter who the dual optimizer is, the induced optimal measure will not have a stochastic logarithm equal to $\nabla B(t,X_t)$.

\bibliographystyle{plain}
\bibliography{bibliotesis}

\begin{thebibliography}{10}

\bibitem{AcBaCa19}
Beatrice Acciaio, Julio Backhoff-Veraguas, and Ren{\'e} Carmona.
\newblock Extended mean field control problems: stochastic maximum principle
  and transport perspective.
\newblock {\em SIAM Journal on Control and Optimization}, 57(6):3666--3693,
  2019.

\bibitem{AcBaJi20}
Beatrice Acciaio, Julio Backhoff-Veraguas, and Junchao Jia.
\newblock {C}ournot-{N}ash equilibrium and optimal transport in a dynamic
  setting.
\newblock {\em arXiv preprint arXiv:2002.08786}, 2020.

\bibitem{andersson2011maximum}
Daniel Andersson and Boualem Djehiche.
\newblock A maximum principle for {SDE}s of mean-field type.
\newblock {\em Applied Mathematics \& Optimization}, 63(3):341--356, 2011.

\bibitem{ArCrLeZa17}
Marc Arnaudon, Ana~Bela Cruzeiro, Christian L{\'e}onard, and Jean-Claude
  Zambrini.
\newblock An entropic interpolation problem for incompressible viscid fluids.
\newblock {\em arXiv preprint arXiv:1704.02126}, 2017.

\bibitem{BaCoGeLe19}
Julio Backhoff-Veraguas, Giovani Conforti, Ivan Gentil, and Christian
  L{\'e}onard.
\newblock The mean field {S}chr{\"o}dinger problem: ergodic behavior, entropy
  estimates and functional inequalities.
\newblock {\em arXiv preprint arXiv:1905.02393}, 2019.

\bibitem{BaLaTa18}
Julio {Backhoff-Veraguas}, Daniel {Lacker}, and Ludovic {Tangpi}.
\newblock {Non-exponential Sanov and Schilder theorems on Wiener space: BSDEs,
  Schr{\"o}dinger problems and Control}.
\newblock {\em Forthcoming at Annals of Applied Probability}, 2018.

\bibitem{BaPa20}
Julio {Backhoff-Veraguas} and Gudmund Pammer.
\newblock Applications of weak transport theory.
\newblock {\em arXiv preprint arXiv:2003.05338}, 2020.

\bibitem{Ba20}
Aymeric Baradat.
\newblock On the existence of a scalar pressure field in the {B}r{\"o}dinger
  problem.
\newblock {\em SIAM Journal on Mathematical Analysis}, 52(1):370--401, 2020.

\bibitem{BaMo19}
Aymeric Baradat and L{\'e}onard Monsaingeon.
\newblock Small noise limit and convexity for generalized incompressible flows,
  {S}chr{\"o}dinger problems, and optimal transport.
\newblock {\em Archive for Rational Mechanics and Analysis}, pages 1--47, 2019.

\bibitem{BeHuSt15}
M.~{Beiglb{\"o}ck}, M.~{Huesmann}, and F.~{Stebegg}.
\newblock {Root to Kellerer}.
\newblock {\em S{\'e}minaire de Probabilit{\'e}s, to appear}, 2016.

\bibitem{BeCaCuNePe15}
Jean-David Benamou, Guillaume Carlier, Marco Cuturi, Luca Nenna, and Gabriel
  Peyr{\'e}.
\newblock Iterative {B}regman projections for regularized transportation
  problems.
\newblock {\em SIAM Journal on Scientific Computing}, 37(2):A1111--A1138, 2015.

\bibitem{BeCaNe19}
Jean-David Benamou, Guillaume Carlier, and Luca Nenna.
\newblock Generalized incompressible flows, multi-marginal transport and
  {S}inkhorn algorithm.
\newblock {\em Numerische Mathematik}, 142(1):33--54, 2019.

\bibitem{BFY}
Alain Bensoussan, Jens Frehse, and Phillip Yam.
\newblock {\em Mean {F}ield {G}ames and {M}ean {F}ield {T}ype {C}ontrol
  {T}heory}, volume 101.
\newblock Springer, 2013.

\bibitem{BeCo15}
Amel Bentata and Rama Cont.
\newblock Forward equations for option prices in semimartingale models.
\newblock {\em Finance and Stochastics}, 19(3):617--651, 2015.

\bibitem{BoSa02}
Andrei~N. Borodin and Paavo Salminen.
\newblock {\em Handbook of {B}rownian motion---facts and formulae}.
\newblock Probability and its Applications. Birkh\"auser Verlag, Basel, second
  edition, 2002.

\bibitem{Br89}
Yann Brenier.
\newblock The least action principle and the related concept of generalized
  flows for incompressible perfect fluids.
\newblock {\em Journal of the American Mathematical Society}, 2(2):225--255,
  1989.

\bibitem{Br93}
Yann Brenier.
\newblock The dual least action problem for an ideal, incompressible fluid.
\newblock {\em Archive for rational mechanics and analysis}, 122(4):323--351,
  1993.

\bibitem{Shreve-mimicking}
G.~Brunick and S.~Shreve.
\newblock Mimicking an {I}t\^o process by a solution of a stochastic
  differential equation.
\newblock {\em Ann. Appl. Probab.}, 23(4):1584--1628, 2013.

\bibitem{buckdahn2011general}
Rainer Buckdahn, Boualem Djehiche, and Juan Li.
\newblock A general stochastic maximum principle for {SDE}s of mean-field type.
\newblock {\em Applied Mathematics \& Optimization}, 64(2):197--216, 2011.

\bibitem{buckdahn2009mean}
Rainer Buckdahn, Boualem Djehiche, Juan Li, and Shige Peng.
\newblock Mean-field backward stochastic differential equations: a limit
  approach.
\newblock {\em The Annals of Probability}, 37(4):1524--1565, 2009.

\bibitem{CD_AP}
Ren{\'e} Carmona and Fran{\c{c}}ois Delarue.
\newblock Forward--backward stochastic differential equations and controlled
  {M}c{K}ean--{V}lasov dynamics.
\newblock {\em The Annals of Probability}, 43(5):2647--2700, 2015.

\bibitem{CarmonaDelarue_book_vol_I}
Ren{\'e} Carmona and Fran{\c{c}}ois Delarue.
\newblock {\em Probabilistic Theory of Mean Field Games with Applications.
  Volume I: Mean Field {FBSDE}s, Control and Games}.
\newblock Springer, 2017.

\bibitem{CattLeo}
P.~Cattiaux and C.~L{\'e}onard.
\newblock Large deviations and {N}elson processes.
\newblock {\em Forum Math.}, 7(1):95--115, 1995.

\bibitem{CattLeo_AIHP}
Patrick Cattiaux and Christian L\'eonard.
\newblock Minimization of the {K}ullback information of diffusion processes.
\newblock {\em Ann. Inst. H. Poincar\'e Probab. Statist.}, 30(1):83--132, 1994.

\bibitem{ChGePa16}
Yongxin Chen, Tryphon~T Georgiou, and Michele Pavon.
\newblock On the relation between optimal transport and {S}chr{\"o}dinger
  bridges: A stochastic control viewpoint.
\newblock {\em Journal of Optimization Theory and Applications},
  169(2):671--691, 2016.

\bibitem{ChEn05}
Alexander~S. Cherny and Hans-J\"urgen Engelbert.
\newblock {\em Singular stochastic differential equations}, volume 1858 of {\em
  Lecture Notes in Mathematics}.
\newblock Springer-Verlag, Berlin, 2005.

\bibitem{Co19}
Giovanni Conforti.
\newblock A second order equation for {S}chr{\"o}dinger bridges with
  applications to the hot gas experiment and entropic transportation cost.
\newblock {\em Probability Theory and Related Fields}, 174(1-2):1--47, 2019.

\bibitem{Cu13}
Marco Cuturi.
\newblock Sinkhorn distances: Lightspeed computation of optimal transport.
\newblock In {\em Advances in neural information processing systems}, pages
  2292--2300, 2013.

\bibitem{superquadraticBSDE}
Freddy Delbaen, Ying Hu, and Xiaobo Bao.
\newblock Backward {SDE}s with superquadratic growth.
\newblock {\em Probab. Theory Related Fields}, 150(1-2):145--192, 2011.

\bibitem{minsupersol}
Samuel Drapeau, Gregor Heyne, and Michael Kupper.
\newblock Minimal supersolutions of convex {BSDE}s.
\newblock {\em Ann. Probab.}, 41(6):3973--4001, 2013.

\bibitem{dualrepminsupersol}
Samuel Drapeau, Michael Kupper, Emanuela Rosazza~Gianin, and Ludovic Tangpi.
\newblock Dual representation of minimal supersolutions of convex {BSDE}s.
\newblock {\em Ann. Inst. Henri Poincar\'e Probab. Stat.}, 52(2):868--887,
  2016.

\bibitem{markovminsupersol}
Samuel Drapeau and Christoph Mainberger.
\newblock Stability and {M}arkov property of forward backward minimal
  supersolutions.
\newblock {\em Electron. J. Probab.}, 21:Paper No. 41, 15, 2016.

\bibitem{FlemingVermes}
Wendell~H. Fleming and Domokos Vermes.
\newblock Convex duality approach to the optimal control of diffusions.
\newblock {\em SIAM J. Control Optim.}, 27(5):1136--1155, 1989.

\bibitem{Gyongy-mimicking}
I.~Gy{\"o}ngy.
\newblock Mimicking the one-dimensional marginal distributions of processes
  having an {I}t\^o differential.
\newblock {\em Probab. Theory Relat. Fields}, 71(4):501--516, 1986.

\bibitem{Gyongy-mimicking2}
I.~Gy{\"o}ngy.
\newblock Mimicking complicated stochastic differential equations by simpler
  ones.
\newblock In {\em Probability theory and mathematical statistics with
  applications ({V}isegr\'ad, 1985)}, pages 87--96. Reidel, Dordrecht, 1988.

\bibitem{HiPr11}
F.~Hirsch, C.~Profeta, B.~Roynette, and M.~Yor.
\newblock {\em Peacocks and associated martingales, with explicit
  constructions}, volume~3 of {\em Bocconi \& Springer Series}.
\newblock Springer, Milan; Bocconi University Press, Milan, 2011.

\bibitem{Ju16}
Nicolas Juillet.
\newblock Peacocks parametrised by a partially ordered set.
\newblock In {\em S{\'e}minaire de probabilit{\'e}s XLVIII}, pages 13--32.
  Springer, 2016.

\bibitem{KaTaTo17}
Sigrid K{\"a}llblad, Xiaolu Tan, Nizar Touzi, et~al.
\newblock Optimal skorokhod embedding given full marginals and az{\'e}ma--yor
  peacocks.
\newblock {\em The Annals of Applied Probability}, 27(2):686--719, 2017.

\bibitem{Ke73}
Hans~G. Kellerer.
\newblock Integraldarstellung von {D}ilationen.
\newblock In {\em Transactions of the {S}ixth {P}rague {C}onference on
  {I}nformation {T}heory, {S}tatistical {D}ecision {F}unctions, {R}andom
  {P}rocesses ({T}ech. {U}niv., {P}rague, 1971; dedicated to the memory of
  {A}nton\'\i n \v {S}pa\v cek)}, pages 341--374. Academia, Prague, 1973.

\bibitem{KozekBan}
A.~Kozek.
\newblock Orlicz spaces of functions with values in {B}anach spaces.
\newblock {\em Comment. Math. Prace Mat.}, 19(2):259--288, 1976/77.

\bibitem{Kozekint}
A.~Kozek.
\newblock Convex integral functionals on {O}rlicz spaces.
\newblock {\em Comment. Math. Prace Mat.}, 21(1):109--135, 1980.

\bibitem{Lacker16}
D.~Lacker.
\newblock A non-exponential extension of sanov's theorem via convex duality.
\newblock Preprint.

\bibitem{La15}
Daniel Lacker.
\newblock Mean field games via controlled martingale problems: existence of
  {M}arkovian equilibria.
\newblock {\em Stochastic Processes and their Applications}, 125(7):2856--2894,
  2015.

\bibitem{La17}
Daniel Lacker.
\newblock Limit theory for controlled {M}c{K}ean--{V}lasov dynamics.
\newblock {\em SIAM Journal on Control and Optimization}, 55(3):1641--1672,
  2017.

\bibitem{Le14}
Christian L\'eonard.
\newblock A survey of the {S}chr\"odinger problem and some of its connections
  with optimal transport.
\newblock {\em Discrete Contin. Dyn. Syst.}, 34(4):1533--1574, 2014.

\bibitem{Lo08b}
G.~Lowther.
\newblock Fitting martingales to given marginals.
\newblock {\em ArXiv e-prints}, August 2008.

\bibitem{meyer2012mean}
Thilo Meyer-Brandis, Bernt {\O}ksendal, and Xun~Yu Zhou.
\newblock A mean-field stochastic maximum principle via {M}alliavin calculus.
\newblock {\em Stochastics An International Journal of Probability and
  Stochastic Processes}, 84(5-6):643--666, 2012.

\bibitem{Mikami_FP}
Toshio Mikami.
\newblock Semimartingales from the {F}okker-{P}lanck equation.
\newblock {\em Appl. Math. Optim.}, 53(2):209--219, 2006.

\bibitem{Zheng_tightness}
W.~A. Zheng.
\newblock Tightness results for laws of diffusion processes application to
  stochastic mechanics.
\newblock {\em Ann. Inst. H. Poincar\'e Probab. Statist.}, 21(2):103--124,
  1985.

\end{thebibliography}

\end{document}